\definecolor{hotmagenta}{rgb}{1.0, 0.11, 0.81}
\def\SL@inlinetext#1{%
  \SL@interlinetextright{\SL@prlabelname{#1}}%
}
\def\SL@interlinetextleft{\SL@setlefttrue\SL@interlinetext}
\def\SL@interlinetextright{\SL@setleftfalse\SL@interlinetext}
\def\SL@interlinetext#1{%
  \setbox\@tempboxa=\hbox{\showlabelsetlabel{\SL@prlabelname{#1}}}\dp\@tempboxa\z@
  \ifvmode
    \nointerlineskip\vbox to 0pt{
      \hbox to \columnwidth{\box\@tempboxa}}
  \else
    \ifSL@setleft
      \hbox to 0pt{%
        \hss
        \vbox to 0pt{\vss
          \hbox to 0pt{\hss\box\@tempboxa}%
          \showlabelrefline
        }}%
    \else
      \hbox to 0pt{%
        \vbox to 0pt{\vss
          \box\@tempboxa
          \showlabelrefline
        }\hss}%
    \fi
    \penalty10000
  \fi
}
\NewDocumentCommand{\prelim}{mm}{%
  \operatorname*{#1-{#2}}%
}
\definecolor{refkey}{rgb}{1,.25,.25}
\definecolor{labelkey}{rgb}{0.25,1,.75}
\definecolor{light-gray}{gray}{0.8}
\DeclareMathAlphabet{\matheu}{U}{eur}{m}{n}
\DeclareSymbolFont{euleroperators}{U}{eur}{m}{n}
\renewcommand{\operator@font}{\mathgroup\symeuleroperators}
\numberwithin{equation}{section}
\def\@seccntformat#1{%
  \protect\textup{\protect\@secnumfont
    \ifnum\pdfstrcmp{subsection}{#1}=0 \bfseries\fi
    \csname the#1\endcsname
    \protect\@secnumpunct
  }%
}
\DeclareRobustCommand\bigop[2][1]{%
  \mathop{\vphantom{\sum}\mathpalette\bigop@{{#1}{#2}}}\slimits@
}
\newcommand{\bigop@}[2]{\bigop@@#1#2}
\newcommand{\bigop@@}[3]{%
  \vcenter{%
    \sbox\z@{$#1\sum$}%
    \hbox{\resizebox{\ifx#1\displaystyle#2\fi\dimexpr\ht\z@+\dp\z@}{!}{$\m@th#3$}}%
  }%
}
   \newcommand{\adjustedaccent}[1]{%
  \mathchoice{}{}
    {\mbox{\raisebox{-.5ex}[0pt][0pt]{$\scriptstyle#1$}}}
    {\mbox{\raisebox{-.35ex}[0pt][0pt]{$\scriptscriptstyle#1$}}}
                                  }
   \newcommand\frownacc[1]{\overset{\adjustedaccent{\bm\frown}}{#1}}
\newcommand{\dol}{\frownacc}
\newcommand{\dbl@overline}[1]{\mathpalette\dbl@@overline{#1}}
\newcommand{\dbl@@overline}[2]{%
  \begingroup
  \sbox\z@{$\m@th#1\overline{#2}$}%
  \ht\z@=\dimexpr\ht\z@-2\dbl@adjust{#1}\relax
  \box\z@
  \ifx#1\scriptstyle\kern-\scriptspace\else
  \ifx#1\scriptscriptstyle\kern-\scriptspace\fi\fi
  \endgroup
}
\newcommand{\dbl@adjust}[1]{%
  \fontdimen8
  \ifx#1\displaystyle\textfont\else
  \ifx#1\textstyle\textfont\else
  \ifx#1\scriptstyle\scriptfont\else
  \scriptscriptfont\fi\fi\fi 3
}
\newcommand{\raisemath}[1]{\mathpalette{\raisem@th{#1}}}
\newcommand{\raisem@th}[3]{\raisebox{#1}{$#2#3$}}
\renewcommand{\thefigure}{\arabic{section}.\arabic{figure}}
\renewcommand{\thefigure}{\ifnum\value{section}>0 \arabic{section}.\fi\arabic{figure}}
\renewcommand{\theequation}{\ifnum\value{section}>-1 \arabic{section}.\fi\arabic{equation}}
\newtheorem{thm}[equation]{Theorem}
\newtheorem{cor}[equation]{Corollary}
\newtheorem{lem}[equation]{Lemma}
\newtheorem{prp}[equation]{Proposition}
\theoremstyle{definition}
\newtheorem{dfn}[equation]{Definition}
\newtheorem{ex}[equation]{Example}
\theoremstyle{remark}
\newtheorem{rem}[equation]{Remark}
\newcommand{\thmref}[1]{Theorem~\ref{#1}}
\newcommand{\prpref}[1]{Proposition~\ref{#1}}
\newcommand{\lemref}[1]{Lemma~\ref{#1}}
\newcommand{\corref}[1]{Corollary~\ref{#1}}
\newcommand{\dfnref}[1]{Definition~\ref{#1}}
\newcommand{\remref}[1]{Remark~\ref{#1}}
\newcommand{\exref}[1]{Example~\ref{#1}}
\newcommand{\secref}[1]{Section~\ref{#1}}
\DeclareMathOperator{\aug}{amp}
\DeclareMathOperator{\bnd}{bnd}
\DeclareMathOperator{\Fun}{Fun}
\DeclareMathOperator{\Meas}{Meas}
\DeclareMathOperator{\mult}{mult}
\DeclareMathOperator{\Perm}{Perm}
\DeclareMathOperator{\Prob}{Prob}
\DeclareMathOperator{\PProb}{PProb}
\DeclareMathOperator{\tail}{tail}
\patchcmd{\@addmarginpar}{\ifodd\c@page}{\ifodd\c@page\@tempcnta\m@ne}{}{}
\newcommand\note[1]{}
\newcommand\uplabel[1]{\vskip-.7cm\label{#1}\vskip.7cm}
\newcommand{\cond}[1]{
\hskip 4pt
\vbox{\hbox{
\begin{minipage}{130mm}{\noindent\sfemph{#1}}\end{minipage}
}}
\hspace{10000pt minus 1fil}
\medskip
}
\newcommand*{\tr}{%
  {\mathpalette\@tr{}}%
}
\newcommand*{\@tr}[2]{%
  \raisebox{\depth}{$\m@th#1\intercal$}%
}
\newcommand\1{{\mathds{1}}}
 \newcommand\Af{{\mathfrak A}}
  \newcommand\As{\mathscr{A}}
 \newcommand\Aut{\operatorname{\sf Aut}}
 \newcommand\Bs{\mathscr{B}}
 \newcommand\Cc{{\mathcal C}}
 \newcommand{\cnv}{\DOTSB\bigop[0.9]{\ast}}
\newcommand\da{\downarrow}
\newcommand\de{\delta}
\newcommand\Eb{{\mathbf E}}
\newcommand\Ec{\mathcal{E}}
\newcommand\Ee{\matheu{E}}
\newcommand\ep{\varepsilon}
\newcommand\et{\eta}
 \newcommand\fl{\mathsf{f}}
  \newcommand\Fs{\mathscr{F}}
\newcommand\Gc{\mathcal{L}}
\newcommand\ha{\hookrightarrow}
\newcommand\HD{\operatorname{HD}}
 \newcommand{\ka}{\kappa}
 \newcommand{\kb}{\upkappa}
 \newcommand\Kb{{\mathbf K}}
 \newcommand\Ks{\mathscr{K}}
 \newcommand\kt{z}
 \newcommand\La{\Lambda}
 \newcommand\la{\lambda}
\newcommand\Mb{{\mathbf M}}
\newcommand\Ms{\mathscr{M}}
\newcommand\Nc{\mathcal{N}}
\newcommand\NN{\mathbb N}
\newcommand\ol{\overline}
\newcommand\Om{\Omega}
\newcommand\om{\omega}
 \newcommand\p{\partial}
\newcommand\Pb{\mathbf P}
\newcommand\Pc{\mathcal P}
\newcommand\Pe{\matheu{P}}
\newcommand\pe{\matheu{p}}
\newcommand\ph{\varphi}
\newcommand\pim{\text{\Large$\bar\pi$}}
\newcommand\qe{{\matheu q}}
 \newcommand\RR{\mathbb R}
\newcommand\Rc{\mathcal R}
 \newcommand\sfemph{\emph}
\newcommand\si{\sigma}
 \newcommand\supp{\operatorname{supp}}
\newcommand\Tc{\mathcal{T}}
\newcommand\Th{\Theta}
\renewcommand\th{\theta}
\newcommand{\toto}{\mathop{\,\sim\joinrel\rightsquigarrow\,}}
\newcommand\upet{\upeta}
\newcommand\we{\matheu w}
 \newcommand\wh{\widehat}
\newcommand{\wlim}{\prelim{\matheu w*}{\lim}}
 \newcommand\wt{\widetilde}
\newcommand\Xb{{\mathbf X}}
\newcommand\Xc{\mathcal{X}}
\newcommand\Xs{\mathscr{X}}
\newcommand\ZZ{{\mathbb Z}}
\newcommand\Zp{{\ZZ_+}}
\newcommand\ZX{\ZZ[\Xs]}
\newcommand\ZpX{\ZZ_+[\Xs]}
\begin{document}

\title{Limit distributions of branching Markov chains}

\author{Vadim A. Kaimanovich}
\address{\parbox{1.0\linewidth}
{
Department of Mathematics and Statistics, University of Ottawa, 150 Louis
Pasteur, Ottawa ON, K1N 6N5, Canada\\
{\tt vkaimano@uottawa.ca, vadim.kaimanovich@gmail.com}}}
\author{Wolfgang Woess}
\address{\parbox{1.0\linewidth}
{
Institut f\"ur Diskrete Mathematik,
Technische Universit\"at Graz,
Steyrergasse 30, 8010~Graz, Austria\\
{\tt woess@tugraz.at}}}
\begin{abstract}
We study branching Markov chains on a countable state space (space of types) $\Xs$ 
with the focus on the qualitative aspects of the limit behaviour of the evolving
empirical population distributions.
No conditions are imposed on the multitype offspring distributions at the points of $\Xs$
other than to have the same average and to satisfy a uniform
$L \log L$ moment condition. We show that the arising population martingale is uniformly
integrable. Convergence of population averages of the branching chain is then put in 
connection with stationary spaces of
the associated ordinary Markov chain on $\Xs$ (assumed to be irreducible and transient).
This is applied, in particular, to the
boundaries of appropriate compactifications of $\Xs$.
Final considerations consider the general interplay between the measure theoretic boundaries of
the branching chain and the associated ordinary chain.
\end{abstract}

\date{May 2022}

\thanks{The second author was supported by Austrian Science Fund project FWF: P31889-N35 during a visit at University of Ottawa in 2019.} 

\subjclass[2020] {60J10; 
60J80; 
60J50, 
31C20
}
\maketitle

\thispagestyle{empty}

\setcounter{section}{-1}
\section{Introduction}

There is a large body of literature devoted to the quantitative aspects of branching random
walks on the additive group of real numbers and to the behaviour of the associated martingales,
e.g., see {\sc Shi} \cite{Shi15} and the references therein. In what concerns more general
state spaces rich enough to have a non-trivial topological boundary at infinity (like,
for instance, infinite trees), it is natural to ask about the limit behaviour of the
branching populations in geometric terms. Non-trivial limit sets of random population
sequences were first exhibited by {\sc Liggett}~\cite{Liggett96} for branching random
walks on regular trees. This was pursued further for branching random walks on free groups  by
\textsc{Hueter~-- Lalley} \cite{Hueter-Lalley00}, on more general free products by
\textsc{Candellero -- Gilch -- M\"uller}  \cite{Candellero-Gilch-Muller12}, and very recently to
random walks on hyperbolic groups by
\textsc{Sidoravicius -- Wang --Xiang}~\cite{Sidoravicius-Wang-Xiang20p}. See also
\textsc{Benjamini -- M\"uller} \cite[Section 4.1]{Benjamini-Muller12} for a number
of conjectures concerning the trace and limit sets of branching random walks. Some
answers were given by
\textsc{Candellero -- Roberts}~\cite{Candellero-Roberts15} and \textsc{Hutchcroft}~\cite{Hutchcroft20}.

We are looking at branching random walks from a different and apparently novel angle.
We are interested in the random limit boundary measures arising from the empirical
distributions of sample populations. Unlike with the limit sets, the very existence
of the limit measures is already a non-trivial problem. We consider and solve this
problem in two different setups: in the topological one (when the boundary of the
state space is provided by a certain compactification) and in the measure-theoretical
one (when we are dealing with the Poisson or exit boundary of the underlying Markov
chain on the state space).
\pagebreak[4]

\begin{dfn} \label{dfn:bmc}
Let $\Xs$ be a countable set, the \sfemph{state space.}

\smallskip

(a)
A \sfemph{population} on $\Xs$ is a finitely supported function $m: \Xs \to \ZZ^+$, also viewed as
a multiset, so that $x \in m$ means that $m(x) > 0$ and $m(x)$ is the number of particles (members of
the population) situated at $x \in \Xs$. Thus, the same location can be shared by several particles.
We emphasise the difference between a population $m\in\Ms=\ZpX$ (which is a \emph{multiset}) and its \sfemph{support} $\supp m = \{x\in\Xs: m(x)>0 \}$ which is a plain 
subset of $\Xs$.

\smallskip

(b)
A \sfemph{branching Markov chain} is a time homogeneous Markov chain on the space of populations
$\Ms$ whose transitions $m\toto m'$ are determined by a family of
\sfemph{branching probability distributions}
$\Pi_x\,,\; x\in\Xs,$ in the following way: each particle of the population $m$ is replaced with a population independently sampled from the distribution $\Pi_x$ determined by the position $x$ of the particle; the result of this procedure is the population~$m'$.
That is, for $y \in \Xs$, $m'(y)$ is the sum of the random offspring numbers which each $x \in m$
places at position $y$.
\end{dfn}

The elements of the state space $\Xs$ are often referred to as \emph{types}, and then one
talks about \emph{multi-type branching processes} (rather than branching Markov chains).
They were first
considered by \textsc{Kolmogorov}~\cite{Kolmogoroff41}. The explicit general definition was given by
\textsc{Harris}~\cite[Section~III.6]{Harris63}. There is an ample literature on multi-type branching
processes, in discrete as well as continuous time. The reader is referred to the survey by
\textsc{Ney}~\cite{Ney91} for a historical account and
for general information on this field. More relevant literature will be outlined further below.

The reason for our choice of terminology is that we
take a geometrical point of view and have in mind a spatial structure of~$\Xs$.
Our branching Markov chain is a sequence $\Mb = (M_n)_{n \ge 0}$ of random populations, and we are interested
in its evolution in that space. In particular, we are interested in the behaviour of the
sequence of \sfemph{empirical distributions}
\begin{equation}\label{eq:empdis}
 \dol M_n = \frac{1}{\|M_n\|} M_n\,,\quad \text{where} \quad \|m\| = \sum_x m(x) \; \text{ for }\; m \in \Ms\,.
\end{equation}
Associated with $\Pi_x$ there is the \sfemph{offspring distribution\footnotemark
\footnotetext{\;We distinguish \emph{branching distributions} (measures on the space of populations $\Ms$) and \emph{offspring distributions} (measures on $\Zp$).}
$\pi_x$ at} $x$, where
\begin{equation}\label{eq:offspring}
\pi_x(k) = \Pi_x ( \{ m \in \Ms : \|m\| = k\} )\,.
\end{equation}
The \sfemph{branching ratio} $\ol \pi_x$ at~$x$ is the first moment of $\pi_x$
(the expected offspring number at~$x$), and
$\ol\pi_{x,y}$ denotes the expected offspring number which is placed at $y \in \Xs$ under the
distribution $\Pi_x\,$.

A branching Markov chain gives rise to the \sfemph{underlying} (also caled \sfemph{base})
``ordinary'' \sfemph{Markov chain} on the state space $\Xs$ whose transition operator (matrix)
$P$ is given by
\begin{equation}\label{eq:transprob}
 p(x,y) = p_x(y) = \ol\pi_{x,y} / \ol\pi_x\,.
\end{equation}
We write $p^{(n)}(x,y)$ for its $n$-step transition probabilities and
$G(x,y) = \sum_{n=0}^{\infty} p^{(n)}(x,y)$ for the associated Green function.

Our basic assumptions, beginning with Section \ref{sec:pm},
are the following.
\begin{equation} \label{ass:ne} \tag{\sfemph{NE}}
\cond{The population cannot die out and has non-trivial branching, that is, $\pi_x(0)=0$ and
$\pi_x(1) < 1$ for all $x \in \Xs$.}
\end{equation}
\begin{equation} \label{ass:br} \tag{\sfemph{BR}}
\cond{The branching ratio is constant and finite, i.e., there is $\rho<\infty$ such that $\ol\pi_x=\rho$ for all points $x\in\Xs$.}
\end{equation}
\begin{equation} \label{ass:tc} \tag{\sfemph{TC}}
\cond{The underlying Markov chain is transient, and all its states communicate, i.e., $0 < G(x,y) < \infty$ for all $x,y\in\Xs$.}
\end{equation}
Note that then $\rho > 1$.
The assumptions can be relaxed, but they simplify some technicalities whithout
compromising the conceptual spirit.

In Section \ref{sec:general}, we set up a rigorous framework and present general classes of
examples, including a discussion and many references.

One of our main aims is to study the boundary behaviour of the sequence \eqref{eq:empdis}
of empirical distributions, or very similarly, of the sequence
\begin{equation}\label{eq:WW_n}
\frac{1}{\rho^n} M_n \in \Meas(\Xs)
\end{equation}
under Assumption \eqref{ass:br}. Assuming that the state space $\Xs$ is endowed with a suitable
\emph{compactification} $\ol\Xs = \Xs \cup \p\Xs$, there has been a body of interesting
work considering the \emph{limit set} of the \emph{trace} (set of visited points) of the branching Markov chain on the boundary $\p\Xs$. For more details and references, see \S \ref{subsec:lim}.
Our goal is to shift the focus and to look, instead of the \emph{limit sets}, at the random \emph{limit boundary measures} obtained as the weak* limits of the empirical distributions \eqref{eq:empdis}, resp., the measures \eqref{eq:WW_n}.

Before we embark on this study, a comparison of \eqref{eq:empdis} and \eqref{eq:WW_n} reveals
that we need to understand the behaviour of the following sequence.

\begin{dfn} \label{dfn:pop}
The \sfemph{population martingale} of a branching Markov chain satisfying \eqref{ass:br}
is the sequence of random variables (functions on the path space)
\begin{equation} \label{eq:pm}
W_n = W_n(\Mb) = \frac{1}{\rho^n} \, \|M_n\|\,.
\end{equation}
We call its a.s.\ pointwise limit
$$
W_\infty = W_\infty(\Mb) = \lim_n W_n(\Mb) \;.
$$
the \sfemph{limit population ratio}.
\end{dfn}
We emphasise that even though the branching ratio is assumed to be constant, the offspring
distributions $\pi_x$ themselves are allowed to be different. What we need here is an
extension of the classical theorem of \textsc{Kesten -- Stigum} \cite{Kesten-Stigum66}
which says that for a single offspring distribution $\pi$, the population martingale is
uniformly integrable
if and only if $\pi$ satisfies the \sfemph{$L \log L$ moment condition.}

This issue is dealt with in Section \ref{sec:pm}.  We introduce the \sfemph{uniform} $L \log L$ moment condition for the family $(\pi_x)_{x \in \Xs}$. Under this condition, we prove that the population
martingale is uniformly integrable (Theorem \ref{thm:ll}) and that $W_\infty$ is almost surely
strictly positive for any initial population (Theorem \ref{thm:pos}).

Section \ref{sec:tc} is the central one: in order to study boundary convergence of
the sequence of empirical distributions \eqref{eq:empdis}, we first consider \emph{stationary
spaces} for the underlying Markov chain. These are measurable spaces equipped with a \emph{$P$-harmonic
system} of probability measures $\ka_x\,$, $x \in \Xs$, see Definition \ref{dfn:stat}. We can then
study the sequence of random measures
$$
\ka_{M_n} = \sum_{x \in M_n} \ka_x \,.
$$
A particularly interesting case is the one where we have
a compactification $\ol \Xs$ of the state space with separable boundary
$\partial \Xs = \ol\Xs \setminus \Xs$. What we want is its compatibility with the underlying Markov chain
$\Xb = (X_0\,,X_1\,,\dots)$ in the sense that 
$X_n$ converges almost surely to a $\partial \Xs$-valued random variable $X_{\infty}$ for any
starting point $x \in \Xs\,$. Endowed with the associated limit distributions $\ka_x\,$, the boundary is a stationary space. Our main Theorem \ref{thm:kconv} states almost sure weak* convergence of the normalised random measures $\frac{1}{\rho^n}\, \ka_{M_n}$
to a positive Borel measure $\kb_{\Mb}$ under the uniform $L \log L$ moment
condition.
Further, assume that the compactification is \emph{Dirichlet regular,} which means that every continuous function on $\partial \Xs$ has a continuous
continuation to $\ol\Xs$ which is $P$-harmonic in $\Xs$. In this situation,
the random measures $\frac{1}{\rho^n}\, M_n$ themselves converge (weak*) to
$\kb_{\Mb}$ almost surely. As a consequence, we obtain in Theorem \ref{thm:emp} that
the random probability measures
$\frac{1}{\|M_n\|}\, \ka_{M_n}$ also converge almost surely. In particular, in the
case of Dirichlet regularity, the sequence of empirical distributions \eqref{eq:empdis} converges
almost surely to a random probability measure on the boundary -- our primary goal.

In the last parts of Section  \ref{sec:tc}, we review geometric, resp. algebraic adaptedness conditions
of the transition probabilities of the underlying chain $(X_n)$ to a given graph or group structure
of the
state space, in which case we speak of a \emph{random walk.} Then we recall a few typical
compactifications and explain how theorems \ref{thm:kconv} and \ref{thm:emp} apply.

In Section \ref{sec:corres}, we shift our attention from topological to measure theoretic
boundary theory. We start by explaining in some detail the Poisson boundary
for a general (i.e., not necessarily group invariant) Markov chain on a countable state space
and its relation with
the tail boundary. We elucidate the relationship between compact stationary spaces and quotients of the
Poisson boundary. Our goal in this section is to establish a link, in the measure-theoretic
context, between the boundaries of a branching Markov chain and that of
the underlying chain. Theorem \ref{thm:bdry} provides a natural \emph{transfer operator}
from the tail boundary of the latter to that of the former, which is Markov on the respective
Banach spaces of essentially bounded functions. Theorem \ref{thm:bb} clarifies how this operator
descends to
a compact stationary space for the base chain. The final Theorem \ref{thm:bmeas} explains the
importance of the above Markov transfer operator: in the topological
context of \S \ref{sec:tc}, its range is precisely the set of random limits
of the sequence of empirical distributions.

\smallskip

\textbf{Acknowledgement.} The beginning of this work has its roots in a discussion of the authors
with Elisabetta Candellero in Warwick in 2015, where the first author of the present paper
proposed to study the behaviour of the sequence of empirical distributions rather than of individual
genealogical lines.

\section{Basic notions} \label{sec:general}

\subsection{General framework} \label{subsec:frame}

Here, we set up a general rigorous framework for our main objects.
Given our countable state space $\Xs$, we use the notations below for the following spaces.
{\setlength{\leftmargini}{23pt}
\begin{itemize}
\item
$\Fun(\Xs)$ -- bounded real-valued functions on $\Xs$;
\item
$\Meas(\Xs)$ -- non-negative (not necessarily finite) measures on $\Xs$;
\item
$\Prob(\Xs)\subset\Meas(\Xs)$ -- probability measures on $\Xs$;
\item
$\Ms= \ZpX \subset \ZZ^\Xs_+$ -- finitely supported $\Zp$-valued functions,  \emph{populations}
as in Def.~\ref{dfn:bmc}, finite \emph{multisets} on $\Xs$.
\end{itemize}}
When talking about integration we use the ``pairing notation'' $\langle \mu, f \rangle$ to denote the integral of a function $f$ with respect to a measure $\mu$ (a sum in the discrete case).

One can also treat $\Ms$ as a subspace of $\Meas(\Xs)$ that comprises all finite
non-negative integer valued measures on $\Xs$ (sometimes called \emph{occupation measures}). Thus, $\|m\|$
is the total mass of $m\in\Ms$.
If $\Xs$ is a group, then $\Ms=\ZpX$ is precisely the set of all non-negative elements of the
\emph{group algebra} $\ZX$ of $\Xs$ over $\ZZ$ (which is the reason for our notation).
In this situation, the map that assigns to any population its amplitude (size) 
\begin{equation}
\label{eq:aug}
\aug: \Ms=\ZpX \to \Zp \;, \qquad m \mapsto \|m\| \;,
\end{equation}
is nothing but a restriction of the corresponding \emph{augmentation homomorphism}. 
We use this
term for the additive \sfemph{augmentation map} $\aug$ in our more general setup
as well. Applied to a measure $\Pi\in\Meas(\Ms)$, it gives rise to the image measure
$$
\pi = \aug(\Pi) \in \Meas(\Zp),
$$
given as in \eqref{eq:offspring} (without the $x$ in the index). The \sfemph{barycentre} (the \sfemph{first moment}) of $\pi$ is denoted by
\begin{equation} \label{eq:fm}
\ol\pi = \sum_k \pi(k) \cdot k \;.
\end{equation}
If $\Pi$ (and therefore $\pi$ as well) is a probability measure, then $\pi$ is the 
\sfemph{size distribution} of the populations sampled from $\Pi$, and $\ol\pi$ is their 
\sfemph{average size}.
We use the same notation
\begin{equation} \label{eq:ol}
\Pi\mapsto \ol\Pi = \sum_{m\in\Ms} \Pi(m) \cdot m \;,\qquad \Meas(\Ms)\to\Meas(\Xs) \;,
\end{equation}
for the \sfemph{barycentre map} on the space $\Meas(\Ms)$ obtained by linear extension of the
mapping
$$
\de_m \mapsto \ol{\de_m} = m \;, \qquad m\in \Ms \;.
$$
The horizontal arrows in the following commutative diagram represent the barycentre maps from $\Meas(\Ms)$ and $\Meas(\Zp)$ to $\Meas(\Xs)$ and $\RR$, respectively,
\begin{equation}
\begin{tikzcd}
\Meas(\Ms) \arrow[r] \arrow[d,"\aug"] & \Meas(\Xs) \arrow[d,"\|\cdot\|"] && \Pi \arrow[r,mapsto] \arrow[d,mapsto] & \ol\Pi \arrow[d,mapsto] &\\
\Meas(\Zp) \arrow[r] & \;\; \RR_+ & \hskip -1.5cm, & \pi \arrow[r,mapsto]  & \ol\pi &
\end{tikzcd}
\end{equation}
In particular, the total mass of the barycentre measure $\ol\Pi$ is
$$
\left\| \ol\Pi \right\| = \ol{\aug(\Pi)} = \ol\pi \;,
$$
so that if $\Pi$ is a probability measure, then
$$
\ol\pi = \sum_{m \in \Ms} \Pi(m)\,\|m\|  
$$
is precisely the average size of the populations sampled from $\Pi$.
If $\ol\pi=\left\| \ol\Pi \right\|<\infty$ (for instance, if $\Pi$ is finitely supported), then the normalisation of $\ol\Pi$ produces the \sfemph{displacement distribution}
\begin{equation} \label{eq:disp} 
p = \frac{\ol\Pi}{\ol\pi} \in \Prob(\Xs), \quad \text{i.e.,} \quad
p(y) = \frac{1}{\ol\pi} \sum_{m \in \Ms} \Pi(m)\,m(y) \; \text{ for }\; y \in \Xs.
\end{equation}

Since the population space $\Ms$ is contained in the commutative group $\ZX$, which is an additive semigroup, one can define in the usual way the \sfemph{convolution} of two measures on $\Ms\,$:
\begin{equation} \label{eq:conv}
\Pi*\Pi'(m'')= \sum_{m+m' = m''} \Pi(m)\,\Pi'(m')\,.
\end{equation}
If both arguments are probability measures, then $\Pi*\Pi'$ is the distribution of the sum $M+M'$,
where the random summands are independently sampled from the respective distributions $\Pi$ and $\Pi'$. Clearly,
\begin{equation} \label{eq:add}
\ol{\Pi*\Pi'} = \ol\Pi + \ol{\Pi'} \qquad \forall\,\Pi,\Pi'\in\Prob(\Ms) \;,
\end{equation}
in particular,
$$
\left\| \ol{\Pi*\Pi'}  \right\| = \left\| \ol\Pi \right\| + \left\| \ol{\Pi'} \right\| \;.
$$

\subsection{Implementation for branching Markov chains} \label{subsec:bd}

For branching Markov chains,
we use the notation of \S \ref{subsec:frame}  for the various objects associated with the
branching distributions $\Pi_x$ by adding the subscript $x$. As anticipated in the Introduction, $\pi_x$ is the offspring distribution at $x$.
The displacement distribution \eqref{eq:disp} associated with the probability measure $\Pi_x$ is the transition kernel $p_x$ of \eqref{eq:transprob}.
As follows from \dfnref{dfn:bmc} and the definition of the convolution operation \eqref{eq:conv}, the transition probabilities of the branching Markov chain $\Mb=(M_0, M_1, \dots)$ are the convolutions
\begin{equation} \label{eq:PiM}
\Pi_m = \cnv_{x\in m} \Pi_x \;, \qquad m\in\Ms \;,
\end{equation}
where we treat the populations $m$ as multisets, so that each point from the support of $m$ is taken with its multiplicity. That is, the probability of the move $m \toto m'$ is $\Pi_m(m')$.
We denote by $\pmb\Pe_{\!\Th}$ the probability measure on the space~$\Ms^\Zp$ of \sfemph{sample paths} of
$\Mb$ corresponding to the initial distribution $\Th\in\Prob(\Ms)$.
We use the notation $\pmb\Pe_m=\pmb\Pe_{\de_m}$ for the initial distribution $\Th=\de_m$ concentrated at a single population $m\in\Ms$, and $\pmb\Pe_{\!x}=\pmb\Pe_{\de_x}$ if $m=\de_x$ is the singleton at a point $x\in\Xs$. The respective expectations are denoted by $\pmb\Ee_\Th\,, \pmb\Ee_m\,, \pmb\Ee_x\,$. All these measures on the path space are absolutely continuous with respect to the common \sfemph{initial full support} class of the measures $\pmb\Pe_{\!\Th}$ corresponding to the initial distributions $\Th$ with $\supp\Th=\Ms$.
\begin{equation} \label{fs} \tag{\sfemph{FS}}
\cond{It is to the initial full support measure class that we refer when we use the expression ``almost everywhere'' without specifying a measure on the path space.}
\end{equation}

The \sfemph{transition operator} of the branching random walk is
\begin{equation} \label{eq:pe}
\Pc F (m) = \langle \Pi_m, F \rangle = \pmb\Ee_m\, F(M_1)
\end{equation}
It is well-defined not only on the space $\Fun(\Ms)$ of bounded functions on $\Ms$, but also
for all non-negative positive functions (allowed to take the value $+\infty$). Following the
standard probabilistic convention we use the postfix notation
$$
\Th \Pc = \sum_m \Th(m)\, \Pi_m
$$
for the action of the dual operator on the space $\Meas(\Ms)$ of positive measures on $\Ms$, so that $\Th\Pc$ is the time 1 marginal distribution of the measure $\pmb\Pe_{\!\Th}$.

For the underlying Markov chain $\Xb = (X_0,X_1,\dots)$ with transition probabilities given by \eqref{eq:transprob}, resp. \eqref{eq:disp}, we denote in the same way as above
the measures on the space $\Xs^\Zp$ of sample paths by $\Pb_{\!\th}$ (or $\Pb_{\!x}=\Pb_{\!\de_x}$, if
the initial distribution $\th$ is concentrated at a single point $x\in\Xs$), the respective
expectations by $\Eb_\th,\Eb_x$, and the transition operator by
\begin{equation} \label{eq:p}
Pf(x) = \langle p_x, f \rangle = \Eb_x f(X_1) \;.
\end{equation}

\subsection{The lifting operator} \label{subsec:trans}

For a function $f$ on $\Xs$, we denote by
\begin{equation} \label{eq:ext}
\wt f (m) = \langle m, f \rangle = \sum_{x\in\Xs} m(x) f(x)
\end{equation}
its \sfemph{lift} to the space of populations $\Ms$. In particular,
\begin{equation} \label{eq:1}
\wt\1(m) = \|m\| \qquad\forall\,m\in\Ms
\end{equation}
for the function $\1(x)\equiv 1$ on $\Xs$. The \sfemph{lifting operator}
\begin{equation} \label{eq:L}
f \mapsto \wt f = L f \;, \qquad \Fun(\Xs)\to\Fun(\Ms) \;,
\end{equation}
is dual to the barycentre map \eqref{eq:ol}, i.e.,
$$
\bigl\langle \, \ol\Th, f \bigr\rangle = \bigl\langle \Th, \wt f \;\bigr\rangle \qquad \forall\,\Th\in\Meas(\Ms), \; f\in\Fun(\Xs) \;.
$$
Therefore, the barycentre map can be written in the postfix notation as
$$
\Th \mapsto \ol \Th = \Th L \;, \qquad \Meas(\Ms)\to\Meas(\Xs) \;,
$$
\begin{prp} \label{prp:tr}
The transition operators $\Pc$ \eqref{eq:pe} and $P$ \eqref{eq:p} of the branching Markov 
chain and of the underlying chain, respectively, satisfy the commutation relation
\begin{equation} \label{eq:comm}
\Pc L = L \pim P \;,
\end{equation}
where $\pim$ denotes the operator of multiplication by the branching ratio function $\ol\pi:x\mapsto\ol\pi_x$ (see subsection \ref{subsec:frame}). In other words,
$$
\Pc\wt f = \wt {\ol\pi\!\cdot\! P f} \qquad\forall\,f\in\Fun(\Xs) \;,
$$
and
$$
\ol{\Th\Pc} = \left( \ol\pi\cdot\ol\Th\, \right) P \qquad\forall\,\Th\in\Meas(\Ms) \;.
$$
\end{prp}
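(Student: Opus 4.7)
The plan is to verify the first identity $\Pc\wt f = \wt{\ol\pi \cdot Pf}$ pointwise on populations, and then derive the second (dual) identity essentially for free by pairing against an arbitrary measure $\Th \in \Meas(\Ms)$ via the duality $\langle \ol\Th, f\rangle = \langle \Th, \wt f\rangle$.

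To verify the first identity, I would fix $f \in \Fun(\Xs)$ and $m \in \Ms$ and expand
\[
\Pc\wt f(m) \;=\; \pmb\Ee_m \, \wt f(M_1) \;=\; \pmb\Ee_m \sum_{y \in \Xs} M_1(y)\, f(y) \;=\; \sum_{y\in\Xs} f(y)\,\pmb\Ee_m M_1(y),
\]
the interchange of sum and expectation being justified by the finite support of $M_1$ (under \eqref{ass:ne} the offspring counts are summable locally) together with boundedness of $f$. The key step is then to evaluate $\pmb\Ee_m M_1(y)$. Here I would invoke the convolution formula \eqref{eq:PiM}, $\Pi_m = \cnv_{x\in m}\Pi_x$, which by definition means that $M_1$ sampled from $\Pi_m$ decomposes as an independent sum $M_1 = \sum_{x \in m} M_1^{(x)}$ with $M_1^{(x)} \sim \Pi_x$ (particles enumerated with multiplicity). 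Linearity of expectation gives
\[
\pmb\Ee_m M_1(y) \;=\; \sum_{x\in m} \pmb\Ee\, M_1^{(x)}(y) \;=\; \sum_{x\in m} \ol\pi_{x,y},
\]
where $\ol\pi_{x,y}$ is the expected offspring placed at $y$ from a single particle at $x$, as recalled right after \eqref{eq:offspring}.

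Substituting back and using the very definition \eqref{eq:transprob} of the underlying transition kernel, $\ol\pi_{x,y} = \ol\pi_x\, p(x,y)$, I obtain
\[
\Pc\wt f(m) \;=\; \sum_{x\in m} \sum_{y\in\Xs} f(y)\, \ol\pi_x\, p(x,y) \;=\; \sum_{x\in m} \ol\pi_x \cdot Pf(x) \;=\; \bigl\langle m,\, \ol\pi\cdot Pf\bigr\rangle \;=\; \wt{\,\ol\pi\cdot Pf\,}(m),
\]
which is exactly $L\pim Pf(m)$, establishing $\Pc L = L\pim P$ as operators on $\Fun(\Xs)$ (the identity extending to non-negative functions by monotone convergence if desired).

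For the dual identity on measures, I would pair: for $\Th\in\Meas(\Ms)$ and any $f\in\Fun(\Xs)$,
\[
\bigl\langle \ol{\Th\Pc},\, f\bigr\rangle \;=\; \bigl\langle \Th\Pc,\, \wt f\bigr\rangle \;=\; \bigl\langle \Th,\, \Pc\wt f\bigr\rangle \;=\; \bigl\langle \Th,\, \wt{\,\ol\pi\cdot Pf\,}\bigr\rangle \;=\; \bigl\langle \ol\Th,\, \ol\pi\cdot Pf\bigr\rangle \;=\; \bigl\langle (\ol\pi\cdot\ol\Th)P,\, f\bigr\rangle,
\]
where the last equality uses the definition of the dual action of $P$. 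Since $f$ is arbitrary, $\ol{\Th\Pc} = (\ol\pi\cdot\ol\Th)P$. No step is truly an obstacle here; the only point requiring a little care is the bookkeeping of multiplicities in the convolution formula \eqref{eq:PiM}, which is resolved by the convention that the particles of $m$ are enumerated as a multiset.
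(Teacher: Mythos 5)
Your proof is correct and follows essentially the same route as the paper's: the paper carries out the identical computation on the dual (measure) side, reducing by linearity to $\Th=\de_m$ and using additivity of barycentres under convolution to get $\ol{\de_m\Pc}=\sum_x m(x)\,\ol\pi_x\,p_x = (\ol\pi\cdot m)P$, which is term-for-term your evaluation of $\pmb\Ee_m M_1(y)=\sum_{x\in m}\ol\pi_{x,y}=\sum_{x\in m}\ol\pi_x\,p(x,y)$. The only cosmetic difference is that you verify the function-side identity pointwise first and deduce the measure-side one by duality, whereas the paper proves the measure-side identity directly and lets the function-side statement follow.
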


\begin{proof}
It is more convenient to prove the commutation relation for the dual operators acting on measures. By linearity it is enough to consider the situation when $\Th=\de_m$ is the delta measure at a population $m\in\Ms$:
$$
\begin{aligned}
\ol{\de_m\Pc}
&= \sum_x m(x) \ol{\,\Pi_x}
= \sum_x m(x) \ol\pi_x \,p_x \\
&= (\ol\pi\cdot m) P
= \left( \ol\pi\cdot\ol{\de_m}\, \right) P \;.
\end{aligned}
$$
\end{proof}

We recall that a function $f$ is called \sfemph{harmonic} with respect to a transition operator~$P$ if $Pf=f$, and it is called \sfemph{$\la$-harmonic} for an eigenvalue $\la\in\RR$ if $Pf=\la f$.

\begin{cor} \label{cor:har}
If the branching ratio $\ol\pi_x\equiv\rho$ is constant, then for any $\la$-harmonic function $f$ of the underlying chain its lift to the population space $\Ms$ is $\la\rho$-harmonic for the branching Markov chain.
\end{cor}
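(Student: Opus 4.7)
The statement is essentially an immediate consequence of \prpref{prp:tr}, and the plan is simply to specialise the commutation relation $\Pc L = L\pim P$ to the setting where $\ol\pi$ is constant and $f$ is an eigenfunction of $P$.

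First, observe that the lifting operator $L\colon f\mapsto\wt f$ defined by $\wt f(m)=\langle m,f\rangle$ is linear in $f$, so it commutes with multiplication by scalars: for any $c\in\RR$ and $f\in\Fun(\Xs)$ one has $\wt{c f}=c\wt f$. Under the assumption $\ol\pi_x\equiv\rho$, the multiplication operator $\pim$ on $\Fun(\Xs)$ is itself just multiplication by the scalar $\rho$, so the commutation relation from \prpref{prp:tr} reduces to
\begin{equation*}
\Pc\wt f \;=\; \wt{\,\ol\pi\cdot Pf\,} \;=\; \wt{\rho\, Pf} \;=\; \rho\,\wt{Pf}\,.
\end{equation*}

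Now specialise to a $\la$-harmonic function $f$, so that $Pf=\la f$. Plugging this into the previous display yields
\begin{equation*}
\Pc\wt f \;=\; \rho\,\wt{\la f} \;=\; \la\rho\,\wt f\,,
\end{equation*}
which is exactly the statement that $\wt f$ is $\la\rho$-harmonic for the transition operator $\Pc$ of the branching Markov chain.

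There is no substantive obstacle here: the entire content of the corollary is already packaged in \prpref{prp:tr}, and the only thing one has to verify is that constancy of the branching ratio turns the operator $\pim$ into a scalar, which commutes trivially with $L$. Consequently the proof consists only of the two short displayed computations above.
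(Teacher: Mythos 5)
Your proof is correct and follows exactly the route the paper intends: the corollary is stated as an immediate consequence of Proposition~\ref{prp:tr}, and specialising the commutation relation $\Pc\wt f=\wt{\ol\pi\cdot Pf}$ to constant $\ol\pi\equiv\rho$ and $Pf=\la f$ is precisely the (unwritten) argument. Nothing is missing.
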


This property will play a key role in the rest of the paper.

\subsection{Examples of branching Markov chains} \label{subsec:exa}

\begin{ex} \label{ex:gw}
If all branching distributions $\Pi_x$ are concentrated on one-point configurations (i.e., all offspring distributions $\pi_x$ are just $\de_1$, and $\ol\pi_x\equiv 1$), then the barycentres~$\ol\Pi_x$ are probability measures, so that in this situation the branching Markov chain consists in running \emph{independent sample paths of the underlying Markov chain} issued from each particle of the initial population.
\end{ex}

\begin{ex} \label{ex:GW}
If the state space $\Xs$ is a singleton, then the size is the only parameter that describes populations
on $\Xs$, and a branching Markov chain over $\Xs$ is determined just by a single offspring
distribution $\Pi\cong\pi=\aug(\Pi)$ on $\Ms\cong\Zp$. Therefore, it is nothing but the usual \emph{Galton~-- Watson branching process} determined by $\pi$.
\end{ex}

\begin{ex} \label{ex:bd}
For a probability measure $\mu$ on $\Xs$, we denote by $\mu^k\in\Prob(\Ms)$ the image of the product measure $\mu^{\otimes k}$ on $\Xs^k$ under the map
$$
\Xs^k\to \Ms\,, \qquad (x_1,\dots,x_k)\mapsto \de_{x_1}+\dots+\de_{x_k} \,.
$$
Given a distribution $\pi\in\Prob(\Zp)$ and a Markov chain on $\Xc$ with the transition
probabilities $p_x\in\Prob(\Xc)$, the family of branching distributions
\begin{equation} \label{eq:bd}
\Pi_x = \sum_{k\ge 0} \pi(k) \cdot p_x^{\otimes k}
\end{equation}
determines then a branching Markov chain with \emph{independent branching and displacement}, for which
all offspring distributions $\pi_x$ coincide with $\pi$, and the displacement distributions 
are $p_x\,$. Any particle occupying a position $x\in\Xc$ fissions into a $\pi$-distributed 
random number of new particles, 
and each new particle moves to a new $p_x$-distributed position independently of all 
other particles. Branching and displacement can be fully decoupled by first generating a 
random Galton -- Watson tree $T$ with the offspring distribution $\pi$, and then running 
the \emph{$T$-indexed Markov chain} with the transition probabilities $p_x$ 
(e.g., see \textsc{Aldous} \cite[Section 6, p.\ 64]{Aldous91a} and 
\textsc{Benjamini~-- Peres} \cite{Benjamini-Peres94}).
\end{ex}

\begin{ex} \label{ex:bs}
In case all offspring distributions $\pi_x$ of a branching Markov chain coincide with a
common distribution $\pi$, this does in no way imply that offspring and displacement are
independent. It just means that all branching distributions $\Pi_x$ have the form
\note{W: The formal definition \eqref{eq:mult} with $\mult$ took me quite a while to be deciphered
from that formalism to its meaning. Many readers would be subject to the same effect. Since
$\mult$ came up only here and only twice, I am convinced that it is not worth while to introduce that
formalism, while writing it out as I
did will help the understanding without consuming additional space.}
$$
\Pi_x = \sum_{k\ge 0} \pi(k) \cdot \Pi_x^k \;,
$$
where $\Pi_x^k$ are probability measures on size $k$ populations. For instance, if we let
$m_y^k = k\cdot \de_y$ be the population with $k$ particles at $y$ and none elsewhere, then we
can consider the branching distributions
\begin{equation} \label{eq:mult}
\Pi_x = \sum_{k\ge 0} \pi(k) \sum_{y \in \Xs} p_x(y) \cdot \delta_{m_y^k}\,.
\end{equation}
The corresponding transition distributions are again $p_x\,$. In the branching Markov chains determined
by both \eqref{eq:bd} and \eqref{eq:mult} first one samples a Galton -- Watson tree with the offspring distribution $\pi$ and then equips this tree with the transitions sampled from the appropriate transition distributions $p_x$. However, in \exref{ex:bd} the independently sampled transitions are parameterised by the \emph{edges} of the tree, whereas for the chain determined by \eqref{eq:mult} they are parameterised by the \emph{vertices} of the tree (so that the transition is the same for all edges issued from the same vertex in the direction away from the root). It might be interesting to look at the branching Markov chains determined by convex combinations of the measures \eqref{eq:bd} and \eqref{eq:mult}.
\end{ex}

\begin{ex} \label{ex:gm}
One can also consider a more general situation than in \exref{ex:bd} with the offspring distributions $\pi_x$ being \emph{space dependent}, 
although the displacement is still governed by the transition probabilities $p_x$ of an underlying Markov chain on $\Xs$ (e.g., see \textsc{Menshikov -- Volkov} \cite{Menshikov-Volkov97} and \textsc{Gantert -- M\"uller} \cite{Gantert-Muller06}). In this case the resulting branching Markov chain is determined by the branching distributions
$$
\Pi_x = \sum_{k\ge 0} \pi_x(k) \cdot p_x^{\otimes k} \;.
$$
In the context of this example, 
the dependence of $\pi_x$ on $x$ is often referred to as an \emph{environment}; if it is random, then one talks about \emph{branching Markov chains in random environment}, see \textsc{Comets~-- Menshikov~-- Popov} \cite{Comets-Menshikov-Popov98}. The term ``environment'' is also used to describe the generalisation of the Galton -- Watson process that allows the offspring distribution to depend on the generation number,  see e.g. \textsc{Athreya -- Ney} \cite[Section~VI.5]{Athreya-Ney72}. By passing to the space-time process (see \secref{sec:tp}) the latter model becomes a particular case of the former one.
\end{ex}

\begin{ex} \label{ex:gr}
If $\Xs$ is a group, then it makes sense to consider the assignments $x\mapsto \Pi_x$ \emph{equivariant} with respect to the natural action of $\Xs$ on the population space $\Ms=\Zp[\Xs]$ by translations, i.e., such that all branching distributions $\Pi_x$ are the translates of a \emph{single} probability measure $\Pi\in\Prob(\Ms)$ (the branching distribution at the group identity). By analogy with ordinary \emph{random walks on groups}, we then talk about \sfemph{branching random walks}. In particular, in this case the offspring distributions $\pi_x$ all coincide with the size distribution~$\pi$ of the measure~$\Pi$, the branching ratios (offspring averages) $\ol\pi_x$ all coincide with $\ol\pi$, and the transition probabilities~$p_x$ are the translates of the displacement distribution~$\mu=\ol\Pi/\ol\pi$, the \sfemph{law} of the random walk on the group: $p_x(y) = \mu(x^{-1}y)$. In the same vein one can also consider the situation when $\Xs$ is endowed with a group action (transitive, quasi-transitive, or a more general one), and the map $x\mapsto\Pi_x$ is equivariant with respect to this action (cf.\ \textsc{Kaimanovich -- Woess} \cite{Kaimanovich-Woess02} and Subsection \ref{subsec:adapt}).
\end{ex}

\subsection{Limit sets vs.\ limit measures} \label{subsec:lim}

Before plunging into \emph{medias res} we outline the earlier approach to the boundary behaviour of branching Markov chains which served as our motivation. For the branching Markov chain
$\Mb=(M_n)$, its \sfemph{trace}
\note{V: Is the property that the trace is almost surely the whole state space the same as the strong recurrence in the sense of M\"uller \cite{Muller08} or Benjamini - M\"uller \cite{Benjamini-Muller12}? In both papers only the chains with independent branching and displacement are considered, but it shouldn't make much difference.}
$$
\supp\Mb = \bigcup_{n\ge 0} \supp M_n \subset \Xs
$$
is the random set of all points from the state space which are charged (or visited) by at least one of the populations $M_n$. Assuming that the state space $\Xs$ is endowed with a \emph{compactification}
$\ol\Xs = \Xs \cup \p\Xs$ (see \secref{sec:zoo} below for definitions and examples), one can then define, in the usual way, the \sfemph{limit set} of a sample path as the boundary of its trace with respect to this compactification:
$$
\La(\Mb) = \ol{\supp\Mb} \setminus \supp\Mb = \ol{\supp\Mb} \cap \p\Xs \;.
$$
Notions of of \sfemph{recurrence and transience}
for branching Markov chains with independent branching and displacement have been studied by \textsc{Benjamini~-- Peres} \cite{Benjamini-Peres94}, \textsc{M\"uller} \cite{Muller08}, \textsc{Bertacchi -- Zucca}~\cite{Bertacchi-Zucca08} (in continuous time) and \textsc{Benjamini -- M\"uller} \cite{Benjamini-Muller12}; see also
{\sc Woess}~\cite[\S 5.C]{Woess09} for a simplified approach.
If the branching Markov chain is recurrent in the sense that $\supp\Mb=\Xs$ for almost
all sample paths, then obviously the limit set $\La(\Mb)$ coincides almost surely with the whole boundary
$\Xs$. Otherwise, proper traces $\supp\Mb\neq\Xs$ may lead to proper limit sets $\La(\Mb)\neq\p\Xs$, and
it makes sense to look at their properties.

Regarding non-trivial limit sets, see the referenes given at the beginning of the introduction.
Note that in those papers only branching random walks with independent branching
and displacement (as described in \exref{ex:bd}) were considered.

Free groups have served as the ``true touchstone'' in the non-commutative random walk theory for the last 60 years, so let us describe the situation with them in more detail (for instance, see \textsc{Ledrappier} \cite{Ledrappier01} and the references therein for more background). Let~$\As$ be a finite \emph{alphabet} of cardinality $d\ge 2$, and $\Fs$ be the \emph{free group} of rank $d$ generated by~$\As$. We fix a symmetric probability measure $\mu$ with support $\As\cup\As^{-1}$; the simplest case is when $\mu$ is equidistributed on $\As\cup\As^{-1}$, so that the random walk
$(\Fs,\mu)$ is just the simple random walk on the homogeneous Cayley tree of the free group. Further, let $\pi$ be the geometric distribution on $\NN$ with parameter $p\in(0,1)$ and mean $\rho=1/p$. We can now consider the branching random walk with independent branching and displacement determined by the underlying random walk $(\Fs,\mu)$ and the offspring distribution $\pi$.

If $\rho r>1$, where $r=r(\Fs,\mu)$ is the spectral radius \eqref{eq:rr} of the random walk
$(\Fs,\mu)$, then almost surely $\supp\Mb=\Fs$ and $\La(\Mb)=\p\Fs$. In the case $\rho r \le 1$, Hueter and Lalley, extending the above cited result by Liggett related to simple random walk, proved that the Hausdorff dimension $\HD\La(\Mb)$ of the limit set with respect to a natural metric on $\p\Fs$ is almost surely constant and obtained an explicit formula for it \cite[Theorem 1]{Hueter-Lalley00}. In particular, it satisfies the inequality
$$
\HD \La(\Mb) \le \frac12 \HD \p\Fs \;,
$$
and $\HD\La(\Mb)\to 0$ as $\rho \to 1$ from above. This result was extended to branching random walks on free products of finitely generated groups under less restrictive conditions by \textsc{Candellero~-- Gilch -- M\"uller} \cite[Theorems 3.5 and 3.10]{Candellero-Gilch-Muller12}, and very recently to
random walks on hyperbolic groups by
\textsc{Sidoravicius -- Wang --Xiang}~\cite{Sidoravicius-Wang-Xiang20p}.

As outlined in the Introduction, our goal here is different; we are interested in
random  \emph{limit boundary measures} arising from the sequences \eqref{eq:empdis}, resp.
\eqref{eq:WW_n}. Unlike with the limit sets, the very existence of the limit measures is a non-trivial
problem. In many cases  there is a phase (regarding the branching ratio $\rho$) where the branching
Markov chain is strongly recurrent in the sense that with probability 1, each state $x \in \Xs$ is visited
by the population infinitely often, see the references of the present subsection. Nevertheless,
the empirical distributions \emph{always} move their mass to infinity, as the following lemma shows,
providing a simple motivation for our goals.

\begin{lem}\label{lem:disappear}
 Under assumptions \eqref{ass:br} and \eqref{ass:tc}, for any $y \in \Xs$
 $$
 \lim_n \dol M_n(y) = 0 \quad \text{almost surely.}
 $$
\end{lem}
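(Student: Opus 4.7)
The plan is to first show $M_n(y)/\rho^n \to 0$ almost surely using the transience of the base chain, and then to compare with the population martingale $W_n = \|M_n\|/\rho^n$ in order to extract the behaviour of the ratio $\dol M_n(y) = M_n(y)/\|M_n\|$.

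For the first step, iterating the commutation of \prpref{prp:tr} applied to $f = \mathbf{1}_{\{y\}}$ yields $\pmb\Ee_m[M_n(y)] = \rho^n \sum_x m(x)\,p^{(n)}(x,y)$. Summing over $n$ and invoking Fubini together with the transience assumption~\eqref{ass:tc},
$$
\pmb\Ee_m\!\Bigl[\,\sum_{n\ge 0} \tfrac{M_n(y)}{\rho^n}\,\Bigr] \;=\; \sum_x m(x)\,G(x,y) \;<\; \infty,
$$
so the sum $\sum_n M_n(y)/\rho^n$ is a.s.\ finite and in particular $M_n(y)/\rho^n \to 0$ a.s. Equivalently, writing $h_y(x):=G(x,y)$, the identity $h_y - P h_y = \mathbf{1}_{\{y\}}$ together with \prpref{prp:tr} makes $\wt{h_y}(M_n)/\rho^n$ a non-negative supermartingale whose Doob compensator is precisely $\sum_{k<n} M_k(y)/\rho^k$, and supermartingale convergence yields the same summability.

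For the second step, \corref{cor:har} applied to the constant function $1$ shows that $W_n$ is a non-negative martingale, hence $W_n \to W_\infty \in [0,\infty)$ almost surely. Writing
$$
\dol M_n(y) \;=\; \frac{M_n(y)/\rho^n}{W_n},
$$
on the event $\{W_\infty > 0\}$ the numerator tends to $0$ while the denominator is eventually bounded away from $0$, giving $\dol M_n(y)\to 0$ almost surely on this event.

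The hard part is the complementary event $\{W_\infty=0\}$, on which the ratio is a priori of indeterminate $0/0$ form and which can carry positive mass if no further moment assumption (such as $L\log L$) is imposed. Heuristically transience should still force the empirical mass at $y$ to vanish, but making this rigorous appears to require a finer normalization of $\|M_n\|$ (for instance via Seneta--Heyde constants $c_n$ for which $\|M_n\|/c_n$ converges a.s.\ to a positive limit on non-extinction), combined with a matching refinement of the estimate on $M_n(y)$. This is the main technical point I anticipate in carrying out the proof.
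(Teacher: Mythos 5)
Your first part is exactly the paper's proof: the paper computes $\pmb\Ee_x\bigl(M_n(y)\bigr)=p^{(n)}(x,y)\,\rho^n$, sums over $n$ to get $\pmb\Ee_x\bigl(\sum_n M_n(y)/\rho^n\bigr)=G(x,y)<\infty$ using \eqref{ass:tc}, and concludes that $M_n(y)/\rho^n\to 0$ almost surely. Your supermartingale reformulation via $h_y=G(\cdot,y)$ is correct but is an embellishment the paper does not need.

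Where you and the paper part ways is the final deduction, and this is the substantive point. The paper's proof ends with the single clause ``therefore $M_n(y)/\rho^n\to0$ and thus also $\dol M_n(y)\to 0$''; it does not divide by $W_n$, does not invoke \thmref{thm:pos}, and contains no Seneta--Heyde normalisation. So the extra machinery you anticipate for the event $\{W_\infty=0\}$ simply is not there: the authors treat the passage from $M_n(y)/\rho^n\to 0$ to $M_n(y)/\|M_n\|\to 0$ as immediate. Your diagnosis of why it is \emph{not} immediate is sound: writing $\dol M_n(y)=\bigl(M_n(y)/\rho^n\bigr)/W_n$ settles the matter only on $\{W_\infty>0\}$, and since the lemma assumes neither the uniform $L\log L$ condition nor even \eqref{ass:ne}, the almost sure positivity of $W_\infty$ (\thmref{thm:pos}, proved later and under stronger hypotheses) is not available here, so $\{W_\infty=0\}$ may carry positive mass. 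In short: your proposal is genuinely incomplete, as you acknowledge, but the missing case is precisely the one the paper's own one-line conclusion passes over, and you will not find the argument you are looking for in the paper's proof. To reproduce what the paper actually does, stop after the Green-function estimate and assert the conclusion; to obtain a complete proof under the stated hypotheses alone, the event $\{W_\infty=0\}$ genuinely has to be handled, and your idea of a finer normalisation of $\|M_n\|$ is a reasonable direction that the paper does not pursue.
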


\begin{proof} It suffices to prove this for the situation when the branching chain starts with
 one particle at a generic $x \in \Xs$. In view of \eqref{eq:transprob},
$$
\pmb\Ee_x\bigl(M_n(y)\bigr) = p^{(n)}(x,y) \, \rho^n\,.
$$
Therefore
$$
\pmb\Ee_x\biggl(\sum_n \frac{1}{\rho^n} M_n(y)\biggr) = G(x,y) < \infty\,.
$$
Therefore $M_n(y)/\rho^n \to 0$ and thus also $\dol M_n(y)\to 0$ almost surely under $ \pmb\Pe_{\!x}\,$.
\end{proof}

\section{Uniform integrability and positivity of the population martingale} \label{sec:pm}

\subsection{The population martingale} \label{subsec:mart}

Recall that we assume \eqref{ass:br}: the offspring averages satisfy $\ol\pi_x=\rho < \infty$ for all
$x\in\Xs$. In terms of the augmentation map \eqref{eq:aug}, the barycentre map \eqref{eq:ol}, and the transition operator $\Pc$ \eqref{eq:pe} this condition means that
$$
\bigl\| \ol{\Th\Pc} \bigr\| = \rho \bigl\| \ol\Th \bigr\| \qquad\forall\,\Th\in\Prob(\Ms) \;,
$$
or, equivalently,
$$
\ol{\aug(\Th\Pc)} = \rho \cdot \ol{\aug(\Th)} \qquad\forall\,\Th\in\Prob(\Ms) \;.
$$
In other words, after one step of the branching Markov chain the average size of populations is always
multiplied by the same constant $\rho$. This is the case, for instance, for the branching Markov chains
from examples \ref{ex:bd}, \ref{ex:bs}, and \ref{ex:gr}; in the setup of \exref{ex:gm}, condition \eqref{ass:br} was used by \textsc{Gantert -- M\"uller} \cite[Section~3.1]{Gantert-Muller06}.
Recall the Definition \ref{dfn:pop} of the population martingale.  The sequence $(W_n)$ is indeed a
martingale with respect to the increasing coordinate filtration on the path space, because by
\corref{cor:har} condition \eqref{ass:br} implies that the lift $\wt\1(m) = \|m\|$ \eqref{eq:1} of the
constant function $\1$ from $\Xs$ to $\Ms$ is $\rho$-harmonic; see \secref{subsec:hm} below for a more
general discussion.

\emph{A priori} the expectation of the limit population ratio
$$
\pmb\Ee_\Th W_\infty = \langle \, \pmb\Pe_{\!\Th}, W_\infty \rangle
$$
may be \emph{strictly smaller} than the expectations
$$
\pmb\Ee_\Th W_n = \pmb\Ee_\Th W_0 = \sum_{m \in \Ms} \Th(m)\,\|m\| 
$$
of the population martingale with respect to the measure $\pmb\Pe_{\!\Th}$ on the path space corresponding to an initial distribution $\Th\in\Prob(\Ms)$. Their equality means that the population martingale is \emph{uniformly integrable} on the path space $(\Ms^\Zp,\pmb\Pe_{\!\Th})$ (e.g., see \textsc{Meyer} \cite[Chapter V]{Meyer66} for the basics of martingale theory). When talking about uniform integrability without specifying a measure on the path space we mean that it holds for \emph{any} initial distribution $\Th\in\Prob(\Ms)$, i.e., with respect to the full initial support measure class \eqref{fs}. In order to guarantee this property it is enough to take for $\Th$ just the delta measures concentrated at singletons $\de_x,\;x\in\Xs$, i.e., to require that
$$
\pmb\Ee_x W_\infty = 1 \qquad\forall\, x\in\Xs \,.
$$

\subsection{Uniform $L\log L$ moment condition} \label{sec:ll}

For the ordinary Galton -- Watson processes (\exref{ex:GW}) the equivalence of the uniform integrability of the population martingale to the \sfemph{$L\log L$ moment condition}
\begin{equation} \label{eq:log}
\sum_k \pi(k)\cdot k\log k < \infty \;.
\end{equation}
on the offspring distribution $\pi$ is the classical theorem of
\textsc{Kesten -- Stigum}~\cite{Kesten-Stigum66} (see also
\textsc{Lyons~-- Pemantle~-- Peres}~\cite{Lyons-Pemantle-Peres95a} and the references therein). 
Although this criterion is directly applicable to the situation when the offspring distributions 
$\pi_x$ are the same for all $x\in\Xs$, in particular, to branching random walks on groups 
(see \exref{ex:gr}), this is not the case for general branching Markov chains.

In order to formulate an analogous result in the general setup we need \emph{tightness} of the
offspring distributions, as follows.

\begin{dfn}\label{dfn:tight}
Given two probability distributions $\pi$ and $\pi'$ on $\Zp$, we say that $\pi$ 
\sfemph{dominates} $\pi'$ (notation: $\pi'\preceq \pi$) if
$$
\pi'[n,\infty) \le \pi[n,\infty) \qquad\forall\,n\in\Zp \,.
$$
A family of probability measures on $\Zp$ satisfies the \sfemph{uniform first moment condition} (resp.,
the \sfemph{uniform $L\log L$ moment condition}) if it is dominated by a probability measure with a finite
first moment (resp., by a measure that satisfies the $L\log L$ moment condition).
\end{dfn}

The uniform moment condition was used, for example,
by \textsc{Kaimanovich~-- Woess} \cite[Lemma 1]{Kaimanovich-Woess92} for random walks on graphs, and
by \textsc{D'Souza -- Biggins}~\cite[p.~40]{D'Souza-Biggins92} for branching processes.

\begin{thm} \label{thm:ll}
If the offspring distributions of a branching Markov chain satisfy the uniform $L\log L$ moment condition, then the population martingale is uniformly integrable.
\end{thm}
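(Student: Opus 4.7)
The plan is to adapt the spine / size-biasing technique of \textsc{Lyons~-- Pemantle~-- Peres}~\cite{Lyons-Pemantle-Peres95a} to the position-dependent setting. First I would note that \corref{cor:har} applied to $\1$ shows that $\wt\1(m) = \|m\|$ is $\rho$-harmonic for $\Pc$, so $(W_n)$ is a non-negative martingale with $\pmb\Ee_x W_n = 1$; \eqref{ass:ne} forces $\rho \geq 1$, and the degenerate case $\rho = 1$ (which forces $\pi_x = \de_1$ for every $x$) renders $W_n \equiv 1$, so one may assume $\rho > 1$. Uniform integrability is then equivalent to $\pmb\Ee_x W_\infty = 1$ for every $x \in \Xs$.

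Next, on a suitably enlarged path space, I would build a ``size-biased'' measure $\tilde{\pmb\Pe}_x$ carrying a distinguished spine $(X_n^{\mathrm{sp}})_{n \geq 0}$: the spine particle at position $y$ reproduces according to the size-biased branching distribution $\tilde\Pi_y(m) = \|m\|\,\Pi_y(m)/\rho$ (a probability measure by \eqref{ass:br}), one of its children is selected uniformly at random to carry the spine forward, and the remaining siblings seed independent sub-chains governed by the original $\Pc$. A standard martingale calculation then yields the Radon-Nikodym identity
$$
\frac{d\tilde{\pmb\Pe}_x|_{\mathcal{F}_n}}{d\pmb\Pe_x|_{\mathcal{F}_n}} = W_n,
$$
so that $(W_n)$ is $\pmb\Pe_x$-uniformly integrable if and only if $W_\infty < \infty$ $\tilde{\pmb\Pe}_x$-almost surely.

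The spine offspring counts $\xi_n^{\mathrm{sp}}$ have, given $X_n^{\mathrm{sp}}$, the size-biased distribution $\tilde\pi_{X_n^{\mathrm{sp}}}$ with $\tilde\pi_y(k) = k\pi_y(k)/\rho$. From $\pi_y \preceq \pi$ one obtains $\sum_{k \geq N} k\pi_y(k) \leq \sum_{k \geq N} k\pi(k)$, and interchanging summations gives, for any $a > 1$,
$$
\sum_{n \geq 0}\, \sup_{y \in \Xs} \tilde\pi_y[a^n, \infty) \;\leq\; \frac{1}{\rho}\sum_k k\pi(k)\,\#\{n : a^n \leq k\} \;\leq\; \frac{1}{\rho \log a}\sum_k \pi(k)\,k \log k \;<\; \infty
$$
by the uniform $L \log L$ hypothesis. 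Borel-Cantelli then gives $\xi_n^{\mathrm{sp}} < a^n$ eventually, $\tilde{\pmb\Pe}_x$-a.s., and I would fix some $a \in (1, \rho)$.

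To conclude, I would decompose the population at time $n$ under $\tilde{\pmb\Pe}_x$ as the spine particle together with the time-$n$ descendants of the $\xi_k^{\mathrm{sp}} - 1$ off-spine siblings produced at time $k+1$ (for $0 \leq k < n$); each off-spine sub-chain evolves under the original dynamics, so its conditional expected size is $\rho^{n-k-1}$. Summing yields
$$
\tilde{\pmb\Ee}_x\bigl[W_n \bigm| \text{spine}\bigr] = \rho^{-n} + \rho^{-1}\sum_{k=0}^{n-1}(\xi_k^{\mathrm{sp}} - 1)\rho^{-k},
$$
which converges $\tilde{\pmb\Pe}_x$-a.s.\ by the preceding step, and Fatou's lemma gives $W_\infty < \infty$ $\tilde{\pmb\Pe}_x$-a.s. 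The hardest part will be the rigorous construction of $\tilde{\pmb\Pe}_x$ in the second step: since the branching distributions $\Pi_y$ are \emph{not} assumed to be of product (branching-$\otimes$-displacement) type as in \exref{ex:bd}, one must carefully disintegrate $\tilde\Pi_y$ into ``chosen spine child $+$ remaining siblings'' and verify the Radon-Nikodym identity from scratch. Once this is in place, the uniform $L\log L$ domination drives every subsequent estimate cleanly.
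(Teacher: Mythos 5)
Your proposal is correct in substance, but it takes a genuinely different route from the paper. The paper's proof is analytic: it works with the Laplace transforms $\Gc_\Th(s)$ and their remainders $\Rc_\th(s)=\Gc_\th(s)-1+\ol\th s$, establishes the recursive inequality $\Gc_{\Th\Pc}(s)\le\Gc_\Th(\rho s)+\ol\th\,\Rc(s)$ of \lemref{lem:pipe} (using domination to replace each $\Rc_{\pi_x}$ by the remainder $\Rc$ of the dominating measure $\pi$), telescopes along the one-dimensional distributions to bound $\pmb\Ee_x\,e^{-sW_\infty}$ by $e^{-s}+\frac{s}{\rho\log\rho}\int_0^s\Rc(\si)\si^{-2}\,d\si$, and then lets $s\to0$, invoking \lemref{lem:rc}(iii) to identify the convergence of that integral with the $L\log L$ condition; this follows \textsc{D'Souza -- Biggins}. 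You instead run the probabilistic spine/size-biasing argument of \textsc{Lyons -- Pemantle -- Peres}, which is essentially the alternative the authors themselves point to via \textsc{Biggins -- Kyprianou} in the remark following \thmref{thm:pos}. Your reduction of uniform integrability to $W_\infty<\infty$ under the size-biased law, the identification of the spine offspring law as $\tilde\pi_y(k)=k\pi_y(k)/\rho$, the transfer of the domination $\pi_y\preceq\pi$ to the size-biased tails (valid since $k\mapsto k\,\mathbf{1}_{[N,\infty)}(k)$ is non-decreasing), the Borel--Cantelli step, and the conditional-expectation decomposition along the spine (where \eqref{ass:br} guarantees each off-spine subtree has expected size $\rho^{n-k-1}$ regardless of its root's position) are all sound; one should add that $W_n$ converges in $[0,\infty]$ under the size-biased law because $1/W_n$ is there a non-negative supermartingale, so the Fatou bound on the $\liminf$ indeed controls $W_\infty$. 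The trade-off: the paper's route is short, self-contained and needs no enlargement of the path space, while yours gives the sharper probabilistic picture (and would more readily yield a converse), at the cost of the construction you rightly flag as the delicate point -- disintegrating the general, non-product branching measures $\Pi_y$ into a chosen spine child at $z$ with weight $\Pi_y(m)\,m(z)/\rho$ plus the residual population $m-\de_z$ evolving under the original dynamics, and verifying the Radon--Nikodym identity for the resulting two-measure system. That construction is standard but must be carried out for your argument to be complete; as written it is a sketched step, not a gap in the idea.
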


It is known since \textsc{Levinson} \cite[Section 4]{Levinson59} that for the ordinary
Galton -- Watson processes the $L\log L$ condition implies that the limit population
ratio is almost surely strictly positive on non-extinction. A consequence of the
Kesten -- Stigum theorem is the equivalence (on non-extinction) of the following two conditions:
\begin{enumerate}[{\rm (i)}]
\item the population martingale $(W_n)$ is uniformly integrable;
\item the limit population ratio $W_\infty$ is almost surely strictly positive.
\end{enumerate}

However, for branching processes in varying environment it may well happen that the limit population ratio
vanishes with positive probability in spite of the uniform integrability of the population martingale (see
the example constructed in \textsc{MacPhee -- Schuh} \cite{MacPhee-Schuh83} and the discussion in
\textsc{D'Souza -- Biggins} \cite[p.~41]{D'Souza-Biggins92}). We do not know whether in our setup the
uniform integrability of the population martingale would always imply that the limit population ratio is
almost surely positive. 
Still, we can show that this is the case under the same uniform $L\log L$ condition as in \thmref{thm:ll}.

\begin{thm} \label{thm:pos}
If the offspring distributions of a branching Markov chain satisfy the uniform $L\log L$ moment condition, then the limit population ratio is almost surely strictly positive for any initial population.
\end{thm}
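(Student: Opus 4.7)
The plan is to analyse the function $q(x)=\pmb\Pe_x(W_\infty=0)$ directly via the functional equation arising from the branching structure, combined with a uniform contraction estimate forced by the $L\log L$ hypothesis. Conditioning on the first generation and using the conditional independence of subpopulations gives
\begin{equation*}
q(x)=\pmb\Ee_x\Bigl[\prod_y q(y)^{M_1(y)}\Bigr],\qquad x\in\Xs,
\end{equation*}
so $Q(m):=\prod_y q(y)^{m(y)}$ is a bounded $\Pc$-harmonic function on $\Ms$ with $\pmb\Pe_m(W_\infty=0)=Q(m)$. By \thmref{thm:ll} we have $\pmb\Ee_x W_\infty=1$, hence $q(x)<1$ for every $x\in\Xs$; the aim is to upgrade this to $q\equiv 0$.

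The first step extracts from uniform $L\log L$ a uniform positive lower bound $c:=\inf_x(1-\pi_x(1))>0$. From $\pi_x(0)=0$ and $\ol\pi_x=\rho$ one has $\sum_{k\ge 2}(k-1)\pi_x(k)=\rho-1$; choosing $K$ with $\sum_{k>K}k\,\pi^*(k)<(\rho-1)/2$ (possible because the dominator $\pi^*$ has finite mean) and using stochastic domination $\pi_x\preceq\pi^*$ yields $\sum_{2\le k\le K}(k-1)\pi_x(k)>(\rho-1)/2$ uniformly in $x$, whence $1-\pi_x(1)\ge(\rho-1)/(2K)$. Since $s-s^k\ge s(1-s)$ for $k\ge 2$ and $s\in[0,1]$, this translates into the uniform one-variable estimate $s-f_x(s)\ge c\,s(1-s)$ for the generating function $f_x(s):=\sum_k\pi_x(k)s^k$. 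Combining with the pointwise bound $\prod_y q(y)^{M_1(y)}\le(s^*)^{\|M_1\|}$, where $s^*:=\sup_y q(y)\in[0,1]$, the recursion gives $q(x)\le f_x(s^*)\le s^*-c\,s^*(1-s^*)$ for every $x$; taking the supremum forces $s^*\le s^*-c\,s^*(1-s^*)$ and hence $s^*\in\{0,1\}$.

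The main obstacle is to rule out the case $s^*=1$: although $q(x)<1$ pointwise, the supremum over a possibly infinite state space might \emph{a priori} equal one. To bridge this gap I would deploy the size-biased ``spine'' construction of Lyons--Pemantle--Peres~\cite{Lyons-Pemantle-Peres95a}. On an enlargement of the path space, build $\hat{\pmb\Pe}_x$ under which a distinguished spine particle $\xi_n\in M_n$ reproduces with the size-biased law $d\hat\Pi_y(m)=(\|m\|/\rho)\,d\Pi_y(m)$ and passes its mark uniformly to a child, while off-spine particles branch independently under the original $\Pi_{\,\cdot}$. Because $W_n$ is a $\Pc$-martingale by \corref{cor:har}, the identity $d\hat{\pmb\Pe}_x=W_n\,d\pmb\Pe_x$ on $\mathcal{F}_n$ is consistent, and the spine $(\xi_n)$ under $\hat{\pmb\Pe}_x$ is distributed as the underlying chain $(X_n)$ under $\Pb_x$. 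Unwinding along the spine yields the Lyons--Peyri\`ere decomposition
\begin{equation*}
W_\infty=\sum_{k\ge 0}\rho^{-(k+1)}\sum_{y\in m_k^{\mathrm{off}}}W_\infty^{(y)},
\end{equation*}
with the $W_\infty^{(y)}$ being, conditionally on the spine, independent copies of the limit ratio from $\delta_y$. A Borel--Cantelli estimate on the size-biased generation sizes along the transient spine, exploiting $\sum_j j\log j\,\pi^*(j)<\infty$, gives $\|m_k^{\mathrm{off}}\|\le\rho^k/k^2$ eventually $\hat{\pmb\Pe}_x$-a.s., so that $W_\infty<\infty$ $\hat{\pmb\Pe}_x$-a.s. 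The resulting Radon--Nikodym identity $d\hat{\pmb\Pe}_x=W_\infty\,d\pmb\Pe_x$ globally yields $\pmb\Pe_x(W_\infty>0)=\hat{\pmb\Ee}_x[1/W_\infty]$, and a Jensen-type estimate $\hat{\pmb\Ee}_x[1/W_\infty]\ge\exp(-\hat{\pmb\Ee}_x[\log W_\infty])$ controlled uniformly in $x$ by the $L\log L$ data produces the uniform lower bound $\inf_x\pmb\Pe_x(W_\infty>0)>0$. Hence $s^*\le 1-\inf_x\pmb\Pe_x(W_\infty>0)<1$, which combined with the dichotomy $s^*\in\{0,1\}$ forces $s^*=0$ and completes the proof. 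The technical heart of the argument is this uniform log-moment estimate, where the uniformity in $x\in\Xs$ must be teased out of the Markovian structure of the spine rather than inherited from a scalar single-type Galton--Watson argument.
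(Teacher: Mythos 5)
Your first half is sound and, in substance, parallel to the paper's own argument: your $q$ is the paper's function $\om(x)=\pmb\Pe_x[W_\infty=0]$, your uniform lower bound $\inf_x \pi_x[2,\infty)>0$ (which needs only \eqref{ass:ne}, the uniform \emph{first} moment condition and $\rho>1$) is exactly what the paper uses to force $\|M_n\|\to\infty$, and your generating-function inequality $s-f_x(s)\ge c\,s(1-s)$ packages the same information into the clean dichotomy $s^*\in\{0,1\}$. Both routes then stand or fall with one and the same key estimate: $s^*=\sup_x q(x)<1$, i.e.\ a bound on $q(x)$ away from $1$ that is \emph{uniform} in $x$ -- you correctly identify this as the crux.

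This is where your proposal has a genuine gap. The paper obtains the uniform bound essentially for free from the inequality \eqref{eq:ineq} already established in the proof of \thmref{thm:ll}: since the remainder $\Rc=\Rc_\pi$ there is built from the single dominating measure $\pi$, the estimate $q(x)\le \pmb\Ee_x e^{-sW_\infty}\le e^{-s}+\frac{s}{\rho\log\rho}\int_0^s \Rc(\si)\si^{-2}\,d\si$ holds with the same right-hand side for \emph{every} $x$, and that right-hand side equals $1-s+o(s)<1$ for small $s>0$ precisely by \lemref{lem:rc}(iii). Your substitute -- the spine construction, the identity $\pmb\Pe_x(W_\infty>0)=\hat{\pmb\Ee}_x[1/W_\infty]$, and the Jensen bound $\hat{\pmb\Ee}_x[1/W_\infty]\ge\exp\bigl(-\hat{\pmb\Ee}_x[\log W_\infty]\bigr)$ -- is not carried out, and as stated it is doubtful: for the Jensen bound to be non-vacuous you need $\hat{\pmb\Ee}_x[\log^+ W_\infty]=\pmb\Ee_x[W_\infty\log^+W_\infty]$ to be finite (and uniformly bounded in $x$), and finiteness of $E[W\log^+W]$ is \emph{not} a consequence of the $L\log L$ condition even for a single-type Galton--Watson process (it requires roughly an $L(\log L)^2$ moment). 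So the ``uniform log-moment estimate'' you defer to is not a routine technicality but the missing -- and in this form probably unobtainable -- ingredient. A workable repair inside your framework: derive $s^*<1$ from a uniform-in-$x$ Laplace-transform or uniform-integrability bound (e.g.\ $\sup_x\pmb\Ee_x[W_\infty\1_{W_\infty>T}]\le\tfrac12$ for some $T$, whence $1\le \tfrac12+T\,\pmb\Pe_x(W_\infty>0)$ and $q(x)\le 1-\tfrac1{2T}$), after which your dichotomy immediately gives $s^*=0$.
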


Our proofs of \thmref{thm:ll} and \thmref{thm:pos} below are self-contained and follow the approach of \textsc{D'Souza~-- Biggins} \cite{D'Souza-Biggins92} to the Galton -- Watson processes in varying environment. \thmref{thm:ll} can also be deduced from the general criterion of uniform integrability of the martingales of multi-type branching processes ($\equiv$ branching Markov chains in our terminology)
associated with ``mean-harmonic functions'' due to \textsc{Biggins~-- Kyprianou} \cite[Theorem 1.1 and the discussion on p. 547]{Biggins-Kyprianou04}, cf.\ \remref{rem:hm} below.

\subsection{Laplace transforms and their remainders}

We denote by
$$
\Gc_\th(s) = \sum_k \th(k) e^{-sk} = \Eb_{\th} e^{-sX}
$$
the \sfemph{Laplace transform} of a probability measure $\th\in\Prob(\Zp)$ (which can be thought of as
the distribution of a $\Zp$-valued random variable $X$). The linear part of the power series expansion
of $\Gc_\th(e^{-s})$ is equal to $1-\ol\th s$, where $\ol\th$ is the expectation of $\th$ (assumed to be finite), and we denote the arising \sfemph{remainder} by
\begin{equation} \label{eq:r}
\Rc_\th(s) = \Gc_\th(s) - 1 + \ol\th s
= \sum_k \th(k) \, \psi(s k)
= \Eb_{\th} \psi(sX)
\end{equation}
with
\begin{equation} \label{eq:psi}
\psi(t) = e^{-t} - 1 + t \ge 0 \;.
\end{equation}
We also use the above notation with the subscript $\Th$ in the situation when
$\th=\aug(\Th)$
is the image of a measure $\Th\in\Prob(\Ms)$ under the augmentation map $\aug$ \eqref{eq:aug}, so that
$$
\Gc_\Th(s) = \pmb\Ee_{\Th} e^{-s\|M\|} \;, \qquad \Rc_\Th(s) = \pmb\Ee_{\Th} \psi(s\|M\|) \,.
$$
\begin{lem} \label{lem:rc}
For any measure $\th\in\Prob(\Zp)$ with a finite first moment
\begin{enumerate}[{\rm (i)}]
\item
the function $\Rc_\th$ is non-decreasing on the positive ray $\RR_+$;
\item
the ratio $\Rc_\th(s)/s$ is non-decreasing on $\RR_+$, and
$$
\lim_{s\to 0} \frac{\Rc_\th(s)}{s}= 0 \;;
$$
\item
the integral
$$
\int_0^C \frac{\Rc_\th(s)}{s^2} \,ds
$$
is convergent for any 
$C>0$ if and only if the measure $\th$ satisfies the $L\log L$ moment condition \eqref{eq:log}.
\end{enumerate}
Further, if a measure $\th\in\Prob(\Zp)$ with a finite first moment dominates another measure
$\th'\in\Prob(\Zp)$, then
\begin{enumerate}[{\rm (iv)}]
\item
\begin{equation} \label{eq:rc}
\Rc_{\th'}(s) \le \Rc_\th (s) \qquad\forall\, s\ge 0 \;.
\end{equation}
\end{enumerate}
\end{lem}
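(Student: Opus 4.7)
The plan is to reduce each of the four parts to an elementary property of the scalar function $\psi(t) = e^{-t} - 1 + t$ and then pass to sums and integrals using Tonelli or dominated convergence. Part (i) is immediate: $\psi'(t) = 1 - e^{-t} \geq 0$ on $\RR_+$, so each map $s \mapsto \psi(sk)$ is non-decreasing, and hence so is the non-negative combination $\Rc_\th$.

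For (ii), the real-variable core is that $t \mapsto \psi(t)/t$ is non-decreasing on $\RR_+$. I would verify this by differentiation, reducing to the inequality $(1+t)e^{-t} \leq 1$, itself a consequence of the fact that $(1+t)e^{-t}$ has derivative $-t e^{-t} \leq 0$ and value $1$ at $t=0$. Writing $\Rc_\th(s)/s = \sum_k \th(k)\,k \cdot \psi(sk)/(sk)$, termwise monotonicity yields monotonicity of the ratio, and because $\psi(u)/u \leq 1$ everywhere (since $\psi(u)\leq u$ is equivalent to $e^{-u}\leq 1$), dominated convergence against the summable majorant $k\cdot\th(k)$ forces $\Rc_\th(s)/s \to 0$ as $s \to 0$.

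For (iii), swap integral and sum via Tonelli and change variables $u = sk$ to obtain
\[
\int_0^C \frac{\Rc_\th(s)}{s^2}\,ds = \sum_k \th(k)\,k \int_0^{Ck} \frac{\psi(u)}{u^2}\,du.
\]
Since $\psi(u)/u^2 \to \tfrac{1}{2}$ at $u = 0$ and $\psi(u)/u^2 \sim 1/u$ at $u = \infty$, the inner integral equals $\log k + O(1)$ uniformly in $k$, so the whole expression is finite iff $\sum_k \th(k)\,k\log k < \infty$, which is precisely the $L\log L$ condition \eqref{eq:log}. Part (iv) is then a direct application of stochastic monotonicity: by (i), $k\mapsto \psi(sk)$ is non-decreasing on $\Zp$ for each fixed $s \geq 0$, and since $\th' \preceq \th$ means $\th$ majorises $\th'$ in the usual stochastic order, Abel summation (equivalently, the tail / layer-cake formula) yields $\Eb_{\th'}\psi(sX) \leq \Eb_\th \psi(sX)$, which is exactly \eqref{eq:rc}.

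The only mildly delicate step will be the logarithmic estimate $\int_0^{Ck}\psi(u)/u^2\,du = \log k + O(1)$ in part (iii), which requires some care near both endpoints; but given the explicit asymptotics of $\psi$ at $0$ and $\infty$ this is routine. Everything else reduces to one-variable calculus and standard manipulations with stochastic order.
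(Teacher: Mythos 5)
Your proposal is correct and follows essentially the same route as the paper: parts (i), (ii) and (iv) are reduced to the elementary monotonicity properties of $\psi$ and $t\mapsto\psi(t)/t$ (with (iv) obtained from stochastic domination against the non-decreasing function $k\mapsto\psi(sk)$, which is exactly what the paper means by ``(iv) is a consequence of (i)''), and part (iii) is proved by the same Tonelli interchange, the substitution $u=sk$, and the asymptotics $\psi(u)/u\to 1$ giving the $k\log(kC)$ behaviour of the inner integral. No gaps.
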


\begin{proof}
(i) and (ii) immediately follow from the same properties of the functions $\psi$ \eqref{eq:psi} and $s\mapsto\psi(s)/s$, respectively, whereas (iv) is a consequence of (i). Property (iii) is well-known, e.g., see \textsc{Athreya -- Ney} \cite[Lemma~I.10.1]{Athreya-Ney72}. Since our setup is somewhat different, for the sake of completeness we include its elementary proof.

The function $\Rc_\th$ being non-negative, by exchanging the order of summation and integration one arrives at
$$
\int_0^C \frac{\Rc_\th(s)}{s^2} \,ds
= \int_0^C \sum_{k=0}^\infty \frac{\psi(sk)}{s^2} \th(k) \,ds
= \sum_{k=0}^\infty \th(k) \int_0^C \frac{\psi(sk)}{s^2} \,ds \;,
$$
where
$$
\int_0^C \frac{\psi(sk)}{s^2} \,ds
= k \int_0^{kC} \frac{\psi(s)}{s^2}\,ds \;.
$$
Since $\psi(s)/s\to 1$ as $s\to\infty$, the latter integral asymptotically behaves as $k\log (kC)$, whence the claim.
\end{proof}

\begin{lem} \label{lem:pipe}
If the offspring distributions $\pi_x$ of a branching Markov chain satisfy the uniform first moment condition, then there exists $s_0>0$ such that for any measure $\Th\in\Prob(\Ms)$
\begin{equation} \label{eq:pipe}
\Gc_{\Th\Pc}(s) \le \Gc_\Th(\rho s) + \ol\th\, \Rc(s) \qquad\text{for all }\;s\in [0,s_0]\;,
\end{equation}
where $\rho$ is the common branching ratio from condition \eqref{ass:br}, $\ol\th$ is the expectation of the measure $\th=\aug(\Th)$, and $\Rc=\Rc_\pi$ is the remainder function \eqref{eq:r} associated with the measure $\pi\in\Prob(\Zp)$ that dominates the distributions $\pi_x$.
\end{lem}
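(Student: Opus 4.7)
The plan is to exploit the convolution structure $\Pi_m = \cnv_{x\in m} \Pi_x$ to factorise the Laplace transform, then apply elementary convexity-type bounds. First, since under $\pmb\Pe_m$ the population $M_1$ decomposes as an independent sum $\sum_{x \in m} M_1^x$ with $M_1^x \sim \Pi_x$, and since $\|\cdot\|$ is additive on $\Ms$, we get
$$
\Gc_{\de_m \Pc}(s) = \pmb\Ee_m e^{-s\|M_1\|} = \prod_{x\in m} \Gc_{\pi_x}(s) \;.
$$
Each factor equals $\Gc_{\pi_x}(s) = 1 - \rho s + \Rc_{\pi_x}(s)$ by the definition of the remainder and condition \eqref{ass:br}. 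The domination $\pi_x \preceq \pi$ combined with part (iv) of \lemref{lem:rc} gives $\Rc_{\pi_x}(s) \le \Rc(s)$, and the standard inequality $1 - \rho s \le e^{-\rho s}$ then yields, for every $x \in \Xs$,
$$
\Gc_{\pi_x}(s) \le e^{-\rho s} + \Rc(s) \;.
$$

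The next step is the elementary estimate: if $u, v \ge 0$ satisfy $u+v \le 1$, then for every positive integer $n$,
$$
(u+v)^n \le u^n + n v \;.
$$
This follows from the mean value theorem applied to $t \mapsto (u+t)^n$ on $[0,v]$, since $(u+t)^{n-1} \le 1$ throughout that interval. To apply it with $u = e^{-\rho s}$ and $v = \Rc(s)$, I need the smallness condition $\Rc(s) \le 1 - e^{-\rho s}$. Since $1 - e^{-\rho s} \sim \rho s$ as $s \to 0^+$ whereas $\Rc(s)/s \to 0$ by part (ii) of \lemref{lem:rc}, there exists $s_0 > 0$ such that this smallness condition holds uniformly on $[0,s_0]$.

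Putting the pieces together, for $s \in [0, s_0]$ and any $m \in \Ms$,
$$
\Gc_{\de_m \Pc}(s) \le \bigl( e^{-\rho s} + \Rc(s) \bigr)^{\|m\|} \le e^{-\rho s \|m\|} + \|m\|\, \Rc(s) \;,
$$
and integrating this bound against $\Th$ while noting that $\sum_m \Th(m) \|m\| = \ol\th$ (the expectation of $\th = \aug(\Th)$) yields \eqref{eq:pipe}. The one genuinely delicate point is the choice of $s_0$: without ensuring $u+v \le 1$ uniformly, the binomial expansion of $(u+v)^n$ produces terms that blow up with $n = \|m\|$, and the naive estimate fails on large populations. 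Everything else is a routine linearisation of Laplace transforms around $s = 0$.
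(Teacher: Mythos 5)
Your proof is correct and follows essentially the same route as the paper's: factorise $\Gc_{\Pi_m}$ over the convolution, linearise each factor as $1-\rho s+\Rc_{\pi_x}(s)$, dominate via \lemref{lem:rc}(iv), apply the mean-value bound $(u+v)^n\le u^n+nv$ for $u+v\le 1$, and conclude by linearity in $\Th$. The only (cosmetic) difference is that you apply $1-\rho s\le e^{-\rho s}$ before raising to the power $\|m\|$ while the paper applies it after, which merely replaces the paper's smallness condition $\Rc(s)\le\rho s\le 1$ by your $\Rc(s)\le 1-e^{-\rho s}$; both follow from \lemref{lem:rc}(ii).
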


\begin{proof}
To begin with, let $\Th$ be the delta measure at the singleton $\de_x \in \Ms,\; x\in\Xs$. Then
$\Th\Pc=\Pi_x$, see \dfnref{dfn:bmc}, whence $\aug(\Th\Pc)=\pi_x$. 
We recall that $\ol\pi_x=\rho$ for all $x\in\Xs$ by our standing assumption \eqref{ass:br}. Therefore, by \lemref{lem:rc}(iv) for any $s\ge 0$
\begin{equation} \label{eq:es}
\begin{aligned}
\Gc_{\Th\Pc}(s) = \Gc_{\pi_x}(s)
&= 1 - \ol\pi_x s + \Rc_{\pi_x}(s) \\
&= 1 - \rho s + \Rc_{\pi_x}(s)
\le 1 - \rho s + \Rc(s)\;.
\end{aligned}
\end{equation}

Now, let $\Th=\de_m$ for $m\in\Ms$, so that $\Th\Pc=\Pi_m$. Then by \eqref{eq:PiM} and \eqref{eq:es},
$$
\Gc_{\Th\Pc}(s)
= \Gc_{\Pi_m}(s)
= \prod_{x\in m} \Gc_{\pi_x}(s)
\le (1 - \rho s + \Rc(s))^{\|m\|}
$$
(counting as always multiplicities in the product).
By \lemref{lem:rc}(ii) we can choose $s_0>0$ in such a way that
$$
\Rc(s) \le \rho s \le 1 \qquad \forall\,s\in [0,s_0] \;.
$$
Since the derivative of the function $t\mapsto t^{\|m\|}$ on the interval $[0,1]$ does not exceed $\|m\|$, we then have
$$
\begin{aligned}
(1 - \rho s + \Rc(s))^{\|m\|}
&\le (1 - \rho s)^{\|m\|} + \|m\|\Rc(s) \\
&\le e^{-\rho s \|m\|} + \|m\|\Rc(s) \qquad\qquad\forall\, s\in [0,s_0] \;,
\end{aligned}
$$
and therefore \eqref{eq:pipe} is satisfied, because $\th=\aug(\Th)=\de_{\|m\|}$, so that $\Gc_\Th(z)=z^{\|m\|}$ and $\ol\th=\|m\|$.

Finally, the general case follows from the linearity of the both sides of \eqref{eq:pipe} with respect to $\Th$.
\end{proof}

\subsection{Proof of \thmref{thm:ll}} \label{subsec:proof-ll}

We denote by
$$
\Th_n = \de_{\de_x} \Pc^n \,, \qquad t\in\Zp \,,
$$
the one-dimensional distributions of the associated measure $\pmb\Pe_{\!x}$ on the space of sample paths of the branching Markov chain $\Mb=(M_0,M_1,\dots)$ with $M_0=\de_x \in \Ms$. Then
$$
\pmb\Ee_x\, e^{-s W_\infty}
= \lim_n \pmb\Ee_x\, e^{-s W_n}
= \lim_n \pmb\Ee_x\, e^{-s \,\|M_n\|/\rho^n}
= \lim_n \Gc_{\Th_n} \Bigl(s/\rho^n) \;.
$$
Condition \eqref{ass:br} implies that
$$
\ol{\aug(\Th_n)} = \rho^n  \;,
$$
whence by \lemref{lem:pipe} for $s\le s_0$
$$
\Gc_{\Th_n} (s/\rho^n) \le \Gc_{\Th_{n-1}} (s/\rho^{n-1})
+ \rho^{n-1} \Rc (s/\rho^n) \,,
$$
and by telescoping
\begin{equation} \label{eq:ineq}
\begin{aligned}
\pmb\Ee_x\, e^{-s W_\infty}
&\le \Gc_{\Th_0} (s)
+ \sum_{n=1}^\infty \rho^{n-1} \Rc(s/\rho^n) \\
&\le e^{-s}
+ \frac{1}{\rho} \int_0^\infty \rho^\tau \Rc(s/\rho^\tau)\,d\tau
= e^{-s} + \frac{s}{\rho\log\rho} \int_0^s \frac{\Rc(\si)}{\si^2}\,d\si \,.
\end{aligned}
\end{equation}
Thus,
$$
\begin{aligned}
\pmb\Ee_x\, W_\infty
&= \lim_{s\to 0} \frac{1 - \pmb\Ee_x\, e^{-s W_\infty}}{s} \\
&\ge \lim_{s\to 0} \left\{
\frac{1-e^{-s}}{s} - \frac{1}{\rho\log\rho} \int_0^s \frac{\Rc(\si)}{\si^2}\,d\si \right\}
= 1 \,.
\end{aligned}
$$

\subsection{Proof of \thmref{thm:pos}} \label{subsec:proof-pos}

Let
\begin{equation} \label{eq:om}
\om(m) = \pmb\Pe_m [W_\infty=0]
\end{equation}
denote the probability that the limit population ratio of the branching random walk issued from an initial population $m\in\Ms$ vanishes. Somewhat abusing notation, we also put $\om(x) = \om(\de_x)$ if the initial population is the singleton $\de_x$ at a point $x\in\Xs$.

The function $\om$ on~$\Ms$ is $\Pc$-harmonic, its values are sandwiched between $0$ and $1$, and
$$
\om(m_1+m_2) = \om(m_1) \om(m_2) \qquad\forall\,m_1,m_2\in\Ms \;.
$$
This implies that the function $\om$ is determined by its values on singletons as
$$
\om(m) = \prod_{x\in m} \om(x) \;,
$$
where, as always, 
each point from the support of~$m$ is taken with its multiplicity.

By inequality \eqref{eq:ineq} from the proof of \thmref{thm:ll}
$$
\om(x) = \pmb\Pe_{\!x} \{W_\infty=0\}
\le \pmb\Ee_x\, e^{-sW_\infty}
\le e^{-s} + \frac{s}{\rho\log\rho} \int_0^s \frac{\Rc(\si)}{\si^2}\,d\si \,,
$$
with the right-hand side of this inequality being strictly less than 1 for all sufficiently small~$s$ in view of \lemref{lem:rc}(iii). Therefore, the function $\om$ \eqref{eq:om} is bounded away from~$1$, i.e., there exists $c<1$ such that
$$
\om(x) \le c \qquad\forall\,x\in \Xs\;.
$$
The fact that the offspring distributions $\pi_x$ satisfy the uniform first moment condition,
whereas their expectations are equal to $\rho>1$, implies that the probabilities
$\pi_x[2,\infty)$ are bounded away from 0. Therefore, at each step of the branching
Markov chain the size of the population increases with a probability bounded away
from $0$, so that $\|M_n\|\to\infty$ almost surely.
Thus,
$$
\om(M_n) \le \eta^{\|M_n\|} \to 0 \;.
$$
We have already mentioned that the function $\om$ is $\Pc$-harmonic, whence $\om\equiv 0$.

\section{Topological convergence of populations} \uplabel{sec:tc}

\subsection{Harmonic systems of measures and stationary spaces} \label{subsec:hars}
In this and the next subsection, we set up the needed background on boundary behaviour
for transient Markov chains, to be applied to the base Markov chain of our branching
chain and subsequently the branching Markov chain itself.

The action of the transition operator of a countable state space Markov chain (see \secref{subsec:trans})
naturally extends from the ``ordinary'' real valued functions to the ones taking values in an arbitrary
affine space (provided infinite convex combinations are well-defined\,---\,this is needed if not all
transition probabilities are finitely supported), in particular, to measure valued functions. By
$\Prob(\Ks)$ we denote the space of probability measures on a measurable space $\Ks$, and in the same way as for real functions we can formulate

\begin{dfn} \label{dfn:stat}
A map
\begin{equation} \label{eq:kadef}
\ka:\Xs\to\Prob(\Ks) \;,\qquad  x\mapsto\ka_x \;,
\end{equation}
---\,in other words, a system $(\ka_x)$ of probability measures on $\Ks$ indexed by a countable space~$\Xs$\,---,is \sfemph{harmonic} 
with respect to a Markov operator $P$ on $\Xs$, if 
$
P\ka=\ka,
$
i.e., if $\ka$ satisfies the mean value property
\begin{equation} \label{eq:kax}
\ka_x = \langle \mu_x,\ka \rangle = \sum_y p_x(y)\, \ka_y \qquad\forall\,x\in \Xs \,,
\end{equation}
where $p_x$ are the transition probabilities of the operator $P$. One also uses the term
\sfemph{stationary} (or, \sfemph{$P$-stationary}), cf.\ \remref{rem:stat} and \exref{ex:stat}.

We shall refer to the couple $(\Ks,\ka)$ as a \sfemph{measurable $P$-stationary space}. In the situation
when $\Ks$ is a topological space endowed with the Borel sigma-algebra, we call it a \sfemph{topological
$P$-stationary space}.
\end{dfn}

As follows from our irreducibility assumption \eqref{ass:tc}, all measures $\ka_x$ in a harmonic system
$\ka=(\ka_x)$ are pairwise equivalent. Therefore one can talk about their common measure class
and we use the notation $L^\infty(\Ks,\ka)=L^\infty(\Ks)$ for the corresponding Banach space of essentially bounded measurable functions.


\begin{rem} \label{rem:stat}
Given a map $\ka$ \eqref{eq:kadef}, we use the same notation for its extension
\begin{equation} \label{eq:kamu}
\ka_\th = \sum_x \th(x)\,\ka_x \;, \qquad \th\in\Meas(\Xs) \;.
\end{equation}
Then the $P$-harmonicity of a map $\ka$ is equivalent to its invariance with respect to the 
action of the operator $P$ on $\Meas(\Xs)$, that is,
$\ka_{\th} = \ka_{\th P}$ for all $\th\in\Meas(\Xs) \,$.

The dual statement is that \eqref{eq:kax} holds if and only if
for any test function $\ph\in L^\infty(\Ks)$, or from
the Banach space $C(\Ks)$ of real valued continuous functions when $\Ks$ is a topological space,
the function
n the measurable case
\begin{equation} \label{eq:test}
f^\ph(x) = \langle \ka_x, \ph \rangle \;, \qquad x\in\Xs \;,
\end{equation}
is $P$-harmonic in the usual sense. In particular, a \emph{non-constant} harmonic system 
exists only if there are non-constant bounded $P$-harmonic functions on $\Xs$.
\end{rem}

\begin{prp} \label{prp:c}
If $(\Ks,\ka)$ is a compact separable $P$-stationary space, then with probability $1$, the Markov chain
$\Xb=(X_n)$ has a random weak* limit
$$
\kb_\Xb = \wlim_{n\to\infty} \ka_{X_n} \in \Prob(\Ks) \;,
$$
and the barycentre of the arising family of measures $\{\kb_\Xb\}$ on $\Ks$ with respect to any distribution $\Pb_{\!x},\;x\in\Xs$, on the path space is the measure
\begin{equation} \label{eq:ikbxb}
\Eb_x(\kb_\Xb) = \ka_x\,\quad \text{i.e.,} \quad \Eb_x(\langle \kb_{\Xb}\,, \ph \rangle )
= \langle \ka_x\,, \ph \rangle \quad  \forall \ph \in C(\Ks)\,, x \in \Xs.
\end{equation}
\end{prp}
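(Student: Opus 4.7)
The plan is to deduce weak* convergence of $\ka_{X_n}$ from scalar martingale convergence applied to bounded $P$-harmonic functions built from test functions on $\Ks$, and then to identify the barycentre by dominated convergence.

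First I would exploit compactness and separability of $\Ks$ to choose a countable family $\{\ph_k\}_{k\ge 1}\subset C(\Ks)$ that is dense in $C(\Ks)$ in the uniform norm (such a family exists because a compact separable Hausdorff space that carries a nontrivial Radon measure class is metrizable, so $C(\Ks)$ is separable). Fix $x\in\Xs$ as starting point. For each $\ph_k$, set $f_k(x)=\langle \ka_x,\ph_k\rangle$ as in \eqref{eq:test}; by \remref{rem:stat} this function is bounded (by $\|\ph_k\|_\infty$) and $P$-harmonic on $\Xs$. Consequently, under $\Pb_{\!x}$, the sequence $f_k(X_n)=\langle \ka_{X_n},\ph_k\rangle$ is a bounded martingale with respect to the natural coordinate filtration, so Doob's martingale convergence theorem yields a $\Pb_{\!x}$-almost sure finite limit $F_k^\infty$. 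Intersecting these full-measure events over the countable family $\{\ph_k\}$ produces a single event of full measure on which $\langle \ka_{X_n},\ph_k\rangle$ converges for every $k$.

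Next I would upgrade this to weak* convergence for \emph{all} $\ph\in C(\Ks)$. On the good event, the sequence $(\ka_{X_n})$ lies in the compact set $\Prob(\Ks)$, which is metrizable in the weak* topology. Given any $\ph\in C(\Ks)$ and $\ep>0$, approximate $\ph$ uniformly by some $\ph_k$ with $\|\ph-\ph_k\|_\infty<\ep$; the triangle inequality
$$
|\langle\ka_{X_n},\ph\rangle-\langle\ka_{X_m},\ph\rangle|\le 2\ep+|\langle\ka_{X_n},\ph_k\rangle-\langle\ka_{X_m},\ph_k\rangle|
$$
shows that $\langle\ka_{X_n},\ph\rangle$ is Cauchy, hence convergent. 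This defines a positive linear functional $\ph\mapsto \lim_n\langle \ka_{X_n},\ph\rangle$ of norm $\le 1$ and sending $\uno$ to $1$; by Riesz representation it is integration against a probability measure $\kb_\Xb\in\Prob(\Ks)$, and $\ka_{X_n}\to\kb_\Xb$ weak*.

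For the barycentre identity \eqref{eq:ikbxb}, fix $\ph\in C(\Ks)$. Since $|\langle \ka_{X_n},\ph\rangle|\le\|\ph\|_\infty$ uniformly in $n$, dominated convergence under $\Pb_{\!x}$ gives
$$
\Eb_x\langle \kb_\Xb,\ph\rangle = \lim_n \Eb_x\langle \ka_{X_n},\ph\rangle = \lim_n \Eb_x f^\ph(X_n) = f^\ph(x) = \langle \ka_x,\ph\rangle,
$$
where the penultimate equality uses $P$-harmonicity of $f^\ph$, which makes $\Eb_x f^\ph(X_n)=f^\ph(x)$ for every $n$. Since this holds for every continuous test function, \eqref{eq:ikbxb} follows.

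I do not foresee a real obstacle: the only technical point is the passage from almost sure convergence on a countable dense family to almost sure weak* convergence, which relies on metrizability of $\Prob(\Ks)$ (equivalently, separability of $C(\Ks)$) for compact separable $\Ks$; this is standard but deserves an explicit remark in the write-up.
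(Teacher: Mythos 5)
Your proof is correct and follows essentially the same route as the paper's: bounded martingales $f^{\ph}(X_n)$ for a countable dense family of test functions, almost-sure convergence on a common co-negligible event, extension to all of $C(\Ks)$ by uniform density, Riesz representation of the resulting positive normalised functional, and the martingale identity $\Eb_x f^\ph(X_n)=f^\ph(x)$ for the barycentre. The one caveat is your parenthetical justification that a compact separable Hausdorff space carrying a Radon measure class is metrizable --- this is false in general (e.g.\ $\beta\NN$ is separable but $C(\beta\NN)\cong\ell^\infty$ is not); the paper sidesteps this by simply positing a countable dense subset $\Phi\subset C(\Ks)$, i.e.\ separability of $C(\Ks)$ is effectively what the hypothesis \emph{separable} is taken to supply, and that is all your argument actually needs.
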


\begin{proof}
As we have already mentioned, for any $\ph\in C(\Ks)$ the function $f^\ph$ \eqref{eq:test} 
on $\Xs$ is $P$-harmonic and obviously bounded. Therefore, the sequence of its values 
$f^\ph(X_n)$ along the sample paths of the chain is a bounded martingale with respect 
to the coordinate filtration of the path space (see \secref{sec:tp} below for more details), 
whence the limit
\begin{equation} \label{eq:lim}
\Kb (\ph) = \lim_n f^\ph(X_n) = \lim_n \langle \ka_{X_n}, \ph \rangle
\end{equation}
exists for almost every sample path, and
\begin{equation} \label{eq:lim2}
\Eb_x\bigl(\Kb(\ph)\bigr) = \Eb_x\bigl( f^\ph(X_0)\bigr) = f^\ph(x)
= \langle \ka_x, \ph \rangle
\quad\forall\,x\in\Xs \;.
\end{equation}
If $\Phi\subset C(\Ks)$ is a countable dense subset, then by discarding the exceptional 
sets for each function $\ph\in\Phi$ one obtains a co-negligible subset $\Om$ of the path 
space such that the limit \eqref{eq:lim} exists 
on $\Om$ for \emph{all} functions $\ph\in\Phi$, hence, by the density assumption, for
\emph{all} $\ph\in C(\Ks)$. Hence, 
this limit determines a non-negative normalised linear functional on $C(\Ks)$,
i.e., a Borel probability measure $\kb_\Xb$ on $\Ks$ such that
\begin{equation} \label{eq:kaxb}
\int \ph\,d\kb_\Xb = \Kb(\ph) \quad \text{on }\; \Om\,,
\end{equation}
and convergence in \eqref{eq:lim}
$$
\langle \ka_{X_n},\ph \rangle \to \Kb (\ph)\qquad\forall\,\ph\in C(\Ks)
$$
is precisely the weak$^*$ convergence of the measures $\ka_{X_n}$ to $\kb_\Xb$ on 
$\Om$.
Now, in terms of the limit measures $\ka_\Xb$ \eqref{eq:kaxb} formula \eqref{eq:lim2} takes the form
$$
\Eb_x\bigl( \langle \kb_\Xb,\ph \rangle\bigr)= \langle \ka_x, \ph \rangle \qquad\forall\,\ph\in C(\Ks) \;,
$$
which proves the statement on the barycentre.
\end{proof}

\begin{ex} \label{ex:stat}
Let $\Xs$ be a countable group continuously acting on a compact space~$\Ks$. Given a probability measure
$\mu$ on $\Xs$, a measure $\ka$ on $\Ks$ is called \sfemph{$\mu$-stationary} if it is preserved by the
\sfemph{convolution} with $\mu$:
\begin{equation} \label{eq:ka}
\ka = \mu * \ka = \sum_x \mu(x)\, \ka_x \;,
\end{equation}
where
$$
\ka_x(A)=\ka(x^{-1}A) \;, \qquad A\subset\Ks \;,
$$
is the \sfemph{$x$-translate} of the measure $\ka$. Since $\Ks$ is compact, the existence of $\mu$-stationary probability measures is guaranteed by the Krylov~-- Bogolyubov theorem
$$
\ka\mapsto\mu*\ka \;,
$$
see \textsc{Furstenberg} \cite[Definition 1.2 and Lemma 1.2]{Furstenberg63a}. In terms 
of the transition operator $P=P_\mu$ of the random walk on $\Xs$ determined by $\mu$ 
(cf.\ \exref{ex:gr}), the $\mu$-stationarity of a measure~$\ka$ is equivalent to the 
$P$-harmonicity of the family of translates $\ka_x\,$.
The proof of \prpref{prp:c} above follows the group case argument in \textsc{Furstenberg} 
\cite[Lemma 3.1 and the ensuing Corollary]{Furstenberg71} which essentially goes back to
\textsc{Furstenberg} \cite[Lemma 1.3]{Furstenberg63a}; also see 
\textsc{Woess} \cite[Theorem~2.2]{Woess96}, \cite[Theorem~20.3]{Woess00} 
(cf.\ \prpref{prp:dir} below).
\end{ex}

Extending the notion of a \sfemph{$\mu$-boundary} for random walks on groups introduced by \textsc{Furstenberg} \cite[Section 8]{Furstenberg73}, at this point we formulate the following.

\begin{dfn} \label{dfn:pb}
A compact separable $P$-stationary space is a \sfemph{topological $P$-boundary} if
with probability 1, the random limit measure $\kb_\Xb$ is a delta measure at a random point.
\end{dfn}

As we shall see in \prpref{prp:bdry} below, topological $P$-boundaries considered as 
measure spaces can be characterised as quotients of the \emph{Poisson boundary} of the chain $(\Xs,P)$.

\subsection{Compactifications and the Dirichlet problem} \label{subsec:cdp}

\emph{A priori} the stationary space~$\Ks$ in \dfnref{dfn:stat} and \prpref{prp:c} does not have to be ``attached'' to the state space~$\Xs$ in any way. Let us now look at the situation when $\Ks$ is the \emph{boundary} $\p\Xs$ of a \emph{compactification} $\ol\Xs= \Xs \cup \p\Xs$ of the state space $\Xs$.
\begin{equation} \label{ass:sc} \tag{\sfemph{SC}}
\cond{We only consider 
compactifications for which 
$\p\Xs$ is separable.}
\end{equation}
\note{W: I do not like ``separable compactification'' when it is the boundary which is separable.}
(Since $\Xs$ is countable, the compactification space $\ol\Xs$ is always separable. Still, the boundary
$\p\Xs$ need not be separable in general,---\,such as, e.g., the \emph{Stone -- {\v C}ech
compactification}).

\begin{dfn} \label{dfn:sr}
A compactification of the state space $\Xs$ of a Markov chain is \sfemph{stochastically resolutive} 
with respect to this chain if $X_n$ converges 
almost surely to the compactification boundary, i.e., with probability $1$
there exists the limit
$$
X_\infty = \lim_n X_n \in \p\Xs \;.
$$
The resulting images $\ka_x$ of the measures $\Pb_{\!x}$ under the limit map are called the \sfemph{hitting distributions} of the Markov chain.
\end{dfn}

This definition alludes to the notion of \emph{resolutivity} from classical potential theory (e.g., see \textsc{Luke\v{s}~-- Netuka - Vesel\'{y}} \cite[Section~4]{Lukes-Netuka-Vesely02}), cf.\ the remark at the beginning of Section 8 in \textsc{Woess} \cite{Woess96}.
By the Markov property the system of hitting measures $\ka_x$ of a stochastically resolutive compactification is $P$-harmonic in the sense of \dfnref{dfn:stat}.

\begin{prp}
The boundary $\p\Xs$ of a stochastically resolutive compactification endowed with the family of the
hitting measures $(\ka_x)$ is a $P$-boundary, and
$$
\wlim_{n\to\infty} \ka_{X_n} = \de_{X_\infty} \qquad \text{almost surely.}
$$
\end{prp}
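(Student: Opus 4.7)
\smallskip

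The plan is to establish the second (stronger) assertion first: that $\ka_{X_n}$ converges weak* to $\de_{X_\infty}$ almost surely. Since $\de_{X_\infty}$ is by construction a Dirac mass at the random boundary point $X_\infty\in\p\Xs$, identifying the limit measure $\kb_\Xb$ of \prpref{prp:c} with $\de_{X_\infty}$ immediately certifies $(\p\Xs,\ka)$ as a topological $P$-boundary in the sense of \dfnref{dfn:pb}. No recourse to the Poisson boundary machinery is needed.

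The starting point is the conditional expectation interpretation of the hitting measures. Fix a starting point $x\in\Xs$ and let $\mathcal{F}_n$ denote the coordinate $\sigma$-algebra on the path space up to time $n$. For every $\ph\in C(\p\Xs)$, the bounded $P$-harmonic function $f^\ph$ from \eqref{eq:test} satisfies, by the Markov property together with the definition $\ka_y=(\Pb_{\!y})\!\circ\! X_\infty^{-1}$,
$$
\langle \ka_{X_n}, \ph \rangle \;=\; f^\ph(X_n) \;=\; \Eb_x\bigl(\ph(X_\infty) \bigm| \mathcal{F}_n\bigr) \qquad \Pb_{\!x}\text{-a.s.}
$$
This is a uniformly bounded martingale (by $\|\ph\|_\infty$), so L\'evy's martingale convergence theorem provides a $\Pb_{\!x}$-a.s. limit equal to $\Eb_x(\ph(X_\infty)\mid\mathcal{F}_\infty)$. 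Stochastic resolutivity makes $X_\infty=\lim_n X_n$ an $\mathcal{F}_\infty$-measurable random variable, so continuity of $\ph$ collapses the conditional expectation to $\ph(X_\infty)=\langle \de_{X_\infty},\ph\rangle$.

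To promote ``a.s.\ for each $\ph$'' to simultaneous a.s.\ convergence for all $\ph\in C(\p\Xs)$, I would mimic the separability step in the proof of \prpref{prp:c}: assumption \eqref{ass:sc} guarantees a countable uniformly dense subset $\Phi\subset C(\p\Xs)$, and intersecting the countably many exceptional null sets yields a conull event on which $\langle\ka_{X_n},\ph\rangle\to\ph(X_\infty)$ holds for every $\ph\in\Phi$. A three-epsilon argument using the uniform bound $|\langle\ka_{X_n},\ph\rangle-\langle\ka_{X_n},\psi\rangle|\le\|\ph-\psi\|_\infty$ extends the convergence to all $\ph\in C(\p\Xs)$, which is precisely the assertion $\ka_{X_n}\xrightarrow{\,w^*\,}\de_{X_\infty}$ on the path space, for every initial point $x\in\Xs$.

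The only delicate step is the identification of the martingale limit with $\ph(X_\infty)$, which requires both a.s.\ convergence of $X_n$ to a boundary point (supplied by resolutivity, \dfnref{dfn:sr}) and continuity of $\ph$ on $\ol\Xs$ restricted to $\p\Xs$; everything else is a routine assembly of Markov property, martingale convergence, and separability exactly as in \prpref{prp:c}.
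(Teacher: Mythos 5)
Your argument is correct, and it takes a genuinely different route from the paper. The paper does not prove this proposition directly: it derives it as a consequence of Proposition~\ref{prp:bdry}, the characterization of compact $P$-boundaries as measure-theoretic quotients of the Poisson boundary, whose proof is relegated to the later section on tail and Poisson boundaries (there the limit of $f^\ph(X_n)$ is identified as $\wh f(\bnd\Xb)=\ph(\qe\circ\bnd\Xb)$ via the Poisson representation, with $\qe\circ\bnd\Xb=X_\infty$ in the stochastically resolutive case). You instead identify the martingale limit directly: writing $f^\ph(X_n)=\Eb_x\bigl(\ph(X_\infty)\mid\mathcal F_n\bigr)$ via the Markov property and the definition of the hitting measures, L\'evy's upward theorem collapses the limit to $\ph(X_\infty)$ because $X_\infty$ is $\mathcal F_\infty$-measurable; the separability step is then exactly the one from Proposition~\ref{prp:c}. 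This is precisely the ``direct argument without using the Poisson boundary'' that the authors' own marginal note asks for. Both proofs ultimately rest on bounded martingale convergence; your version is self-contained and avoids the measure-theoretic boundary machinery, while the paper's detour buys the general quotient characterization of Proposition~\ref{prp:bdry}, which it reuses later for the branching chain. One small point to be aware of: the existence of a countable dense subset of $C(\p\Xs)$ requires slightly more than topological separability of $\p\Xs$ (metrizability suffices), but you are using exactly the same convention as the paper does in Proposition~\ref{prp:c}, so this is not a defect of your argument relative to theirs.
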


This is a consequence of a general result on the identification of $P$-boundaries with the quotients of the Poisson boundary (\prpref{prp:bdry}) which we relegate to \secref{sec:tp}.

\begin{dfn} \label{dfn:dir}
A compactification of the state space $\Xs$ of a Markov chain with the transition operator $P$ is
\sfemph{Dirichlet regular} with respect to this chain if for any function $\ph\in C(\p\Xs)$ there
is a unique $P$-harmonic function $f^\ph$ on $\Xs$ (the \sfemph{solution of the Dirichlet problem}
with the boundary data $\ph$) that provides a continuous extension of $\ph$ to all of $\ol\Xs$. In
this situation for any $x\in\Xs$
$$
\ph \mapsto f^\ph(x)
$$
is a norm 1 positive linear functional on $C(\p\Xs)$ represented by a Borel probability
measure~$\ka_x$ on $\p\Xs$ ($\equiv$ the \sfemph{harmonic measure} with pole at $x$) as
\begin{equation} \label{eq:kah}
f^\ph(x) = \int \ph(\xi)\,d\ka_x(\xi) = \langle \ka_x, \ph \rangle \;.
\end{equation}
\end{dfn}

The system of harmonic measures from \dfnref{dfn:dir} is $P$-harmonic in the sense of 
\dfnref{dfn:stat} (cf.\ \remref{rem:stat}).

\begin{prp}[\textsc{Woess} {\cite[Theorem 2.2]{Woess96}, \cite[Theorem 20.3]{Woess00}}] \label{prp:dir}
A compactification satisfying \eqref{ass:sc} $\ol\Xs=\Xs\cup\p\Xs$  of the state space 
$\Xs$ of a transient Markov chain is Dirichlet regular if and only if the following two conditions hold:
\begin{enumerate}[{\rm (i)}]
\item
the compactification is stochastically resolutive;
\item
the system of the hitting measures $\ka_x$ has the property that
$$
\wlim_{x \to \xi} \ka_x = \de_\xi \qquad\forall\,\xi\in\p\Xs \;.
$$
\end{enumerate}
In this situation the measures arising from the solvability of the Dirichlet problem coincide
with the hitting measures $\ka_x$.
\end{prp}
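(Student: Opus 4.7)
The plan is to prove the equivalence in both directions by exploiting the bounded martingale structure of $P$-harmonic functions along the base chain: if $h$ is a bounded $P$-harmonic function on $\Xs$, then $(h(X_n))$ is a bounded $\Pb_x$-martingale and thus converges $\Pb_x$-almost surely. I will couple this with the topology of $\ol\Xs$ through the continuous functions $\ph\in C(\p\Xs)$ (and their putative $P$-harmonic extensions), using the standing hypotheses $\eqref{ass:tc}$ and $\eqref{ass:sc}$, in perfect analogy with the proof of \prpref{prp:c}.

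\emph{Forward direction.} Assume Dirichlet regularity. For each $\ph\in C(\p\Xs)$ the Dirichlet extension $f^\ph\in C(\ol\Xs)$ is bounded and $P$-harmonic, so $M_n^\ph:=f^\ph(X_n)$ is a bounded martingale that converges $\Pb_x$-a.s. Compactness of $\ol\Xs$ provides accumulation points of $(X_n)$; transience forbids accumulation inside the countable set $\Xs$, so all of them lie in $\p\Xs$. If $(X_n)$ had two distinct accumulation points $\xi_1\neq\xi_2$ along subsequences, the continuity of $f^\ph$ on $\ol\Xs$ would produce two different subsequential limits of $M_n^\ph$; since $C(\p\Xs)$ separates points of $\p\Xs$ and, by $\eqref{ass:sc}$, we may work with a countable dense $\Phi\subset C(\p\Xs)$ on a single $\Pb_x$-full event handling \emph{all} $\ph\in\Phi$ at once, we conclude that $(X_n)$ converges $\Pb_x$-a.s.\ to a random point $X_\infty\in\p\Xs$. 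This yields (i). Denoting the $\Pb_x$-law of $X_\infty$ by $\nu_x$, dominated convergence gives $f^\ph(x)=\lim_n\Eb_x M_n^\ph=\Eb_x\,\ph(X_\infty)=\langle\nu_x,\ph\rangle$, so $\nu_x$ agrees with the Dirichlet harmonic measure $\ka_x$ from \eqref{eq:kah}, which also verifies the coincidence claim at the end of the proposition. Finally, (ii) is immediate from continuity of the extensions: as $x\to\xi\in\p\Xs$ one has $\langle\ka_x,\ph\rangle=f^\ph(x)\to f^\ph(\xi)=\ph(\xi)$ for every $\ph\in C(\p\Xs)$, i.e.\ $\ka_x$ converges weak* to $\de_\xi$.

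\emph{Converse.} Assume (i) and (ii), and for $\ph\in C(\p\Xs)$ set $f^\ph(x):=\langle\ka_x,\ph\rangle$. The Markov property applied to the terminal event $\{X_\infty\in\cdot\,\}$ gives the stationarity relation $\ka_x=\sum_y p_x(y)\,\ka_y$, so $f^\ph$ is $P$-harmonic with $\|f^\ph\|_\infty\le\|\ph\|_\infty$, while (ii) says precisely that $f^\ph$ extends continuously to $\ol\Xs$ with boundary trace $\ph$. For uniqueness, let $g$ be any bounded $P$-harmonic function on $\Xs$ whose continuous extension restricts to $\ph$ on $\p\Xs$; then $(g(X_n))$ is a bounded martingale, and (i) combined with continuity of the extension gives $g(X_n)\to\ph(X_\infty)$ $\Pb_x$-a.s., whence dominated convergence yields $g(x)=\Eb_x\,g(X_0)=\Eb_x\,\ph(X_\infty)=f^\ph(x)$. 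The only delicate step in the whole argument is in the forward direction, where one must simultaneously exclude interior accumulation of $(X_n)$ — handled by $\eqref{ass:tc}$ — and secure a single full-measure event on which all continuous test functions converge along paths — handled by the countable dense subset $\Phi\subset C(\p\Xs)$ afforded by $\eqref{ass:sc}$.
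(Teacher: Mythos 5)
The paper does not prove this proposition at all; it is quoted verbatim from the cited references (\textsc{Woess} \cite[Theorem 2.2]{Woess96}, \cite[Theorem 20.3]{Woess00}), so there is no in-text argument to compare against. Your reconstruction is correct and follows the same standard route as the cited source: bounded martingale convergence of $f^\ph(X_n)$ for the Dirichlet extensions, a countable family of test functions separating points of $\p\Xs$ to force a.s.\ convergence of $(X_n)$ to the boundary (transience ruling out cluster points in $\Xs$), identification of the Dirichlet measures with the hitting measures via dominated convergence, and, conversely, construction of the solution as $x\mapsto\langle\ka_x,\ph\rangle$ with uniqueness again from the bounded martingale argument.

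Two small points worth tightening. First, in the forward direction you speak of ``two distinct accumulation points along subsequences''; if $\p\Xs$ is merely separable compact Hausdorff (condition \eqref{ass:sc}) rather than metrizable, a cluster point need not be a subsequential limit, so the cleaner phrasing is that $\psi(\xi_1)$ and $\psi(\xi_2)$ are both cluster points of the convergent real sequence $f^\psi(X_n)$, which is already a contradiction; likewise the existence of a countable dense $\Phi\subset C(\p\Xs)$ implicitly presupposes metrizability of $\p\Xs$ --- but the paper makes exactly the same implicit assumption in its proof of \prpref{prp:c}, so you are consistent with its conventions. Second, in the converse you should record the (routine) verification that the function equal to $\langle\ka_x,\ph\rangle$ on $\Xs$ and to $\ph$ on $\p\Xs$ is continuous on all of $\ol\Xs$: continuity at boundary points along approaches from within $\Xs$ is exactly condition (ii), and along approaches from within $\p\Xs$ it is continuity of $\ph$. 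Neither point is a gap in substance.
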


\begin{rem}
In terms of the boundary convergence the difference between stochastic resolutivity and Dirichlet
regularity is that in the latter case the harmonic measures $\ka_{x_n}$ converge to the delta measure
$\de_{x_\infty}$ at the limit point $x_\infty=\lim x_n \in \p\Xs$ for \emph{any} boundary convergent
sequence $(x_n)$, whereas in the latter case $\ka_{X_n}\to \de_{X_\infty}$ just almost surely, i.e., along \emph{almost all} sample paths 
of the chain. Stochastic resolutivity on its own does by no means imply Dirichlet regularity. For instance, see \textsc{Benjamini -- Peres} \cite[Example~3]{Benjamini-Peres92} and \textsc{Kaimanovich~-- Woess} \cite[pp.\ 461-462]{Kaimanovich-Woess92} for examples of this kind with random walks on trees.
\end{rem}

\begin{rem}
If a $P$-stationary space $(\Ks,\ka)$ is compact, then the map $\ka:x\mapsto\ka_x$ 
\eqref{eq:kadef} provides an \emph{embedding} of the discrete space~$\Xs$ into the 
compact space $\Prob(\Ks)$ of Borel probability measures on $\Ks$ endowed with the 
weak* topology, and therefore it gives rise to a \emph{compactification} of $\Xs$ 
whose boundary is the collection of all weak* limit points of the system $(\ka_x)$. 
This idea goes back to \textsc{Furstenberg} \cite[Chapter II]{Furstenberg63} who 
used it to define a compactification of Riemannian symmetric spaces. \prpref{prp:c} 
then implies that this compactification is stochastically resolutive.
\end{rem}

Our various preliminary considerations lead to the following, which is going to be a basic 
tool for proving a.s.\ convergence of the empirical distributions
to a random distribution on the boundary.

\begin{prp} \label{prp:cc}
Let $\ol\Xs=\Xs\cup\Xs$ be a Dirichlet regular compactification of the state space $\Xs$ of a transient Markov chain. If
$$
\wlim_{n\to\infty} \th_n = \th_\infty \in \Prob(\p\Xs) \;,
$$
for a sequence of measures $\th_n\in\Prob(\Xs)$, then also
$$
\wlim_{n\to\infty} \ka_{\th_n} = \th_\infty
$$
for the sequence of the associated harmonic measures $\ka_{\th_n}$.
\end{prp}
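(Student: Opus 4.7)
The plan is to test weak* convergence of $\ka_{\th_n}$ against an arbitrary continuous test function $\ph\in C(\p\Xs)$, and to reduce this to the given convergence $\th_n\to\th_\infty$ by using the solvability of the Dirichlet problem as a way to lift $\ph$ from the boundary to the whole compactification. I read the hypothesis as saying that the measures $\th_n$, viewed as elements of $\Prob(\ol\Xs)$ supported on $\Xs$, converge weak* to $\th_\infty$ viewed as an element of $\Prob(\ol\Xs)$ supported on $\p\Xs$; this is what is needed to pair them with functions in $C(\ol\Xs)$.

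First, given $\ph\in C(\p\Xs)$, Dirichlet regularity (\dfnref{dfn:dir}) supplies a unique continuous extension $f^\ph\in C(\ol\Xs)$ which is $P$-harmonic on $\Xs$ and for which \eqref{eq:kah} reads $f^\ph(x)=\langle \ka_x,\ph\rangle$ for every $x\in\Xs$. Unfolding the definition \eqref{eq:kamu} of $\ka_{\th_n}=\sum_x \th_n(x)\ka_x$ then gives
\[
\langle \ka_{\th_n},\ph\rangle \;=\; \sum_{x\in\Xs} \th_n(x)\,\langle \ka_x,\ph\rangle \;=\; \sum_{x\in\Xs} \th_n(x)\,f^\ph(x) \;=\; \langle \th_n,f^\ph\rangle .
\]
Since $f^\ph\in C(\ol\Xs)$ and $\th_n\to\th_\infty$ weak* in $\Prob(\ol\Xs)$, the right-hand side converges to $\langle \th_\infty,f^\ph\rangle$. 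Finally, because $\th_\infty$ is carried by $\p\Xs$, on which $f^\ph$ coincides with $\ph$, this limit equals $\langle \th_\infty,\ph\rangle$. As $\ph$ was arbitrary, $\ka_{\th_n}\to\th_\infty$ weak*.

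I do not foresee any substantive obstacle: the argument is essentially a dualisation of Dirichlet regularity, which is precisely the statement that pairing against $\ka_x$ on $\p\Xs$ is the same as pairing against $\de_x$ on $\ol\Xs$ after harmonic extension. The only mildly delicate point is the choice of ambient space (one must pass through $C(\ol\Xs)$ rather than $C(\Xs)$ or $C(\p\Xs)$ in isolation), but this is automatic once $f^\ph$ is regarded as a continuous function on the full compactification.
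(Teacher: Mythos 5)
Your argument is correct and is essentially identical to the paper's proof: both pass from $\ph\in C(\p\Xs)$ to its harmonic extension $f^\ph\in C\bigl(\ol\Xs\bigr)$ via Dirichlet regularity, use the identity $\langle \ka_{\th_n},\ph\rangle=\langle \th_n,f^\ph\rangle$, and conclude by weak* convergence together with the fact that $f^\ph=\ph$ on $\p\Xs$. The extra care you take in spelling out the ambient space $\Prob\bigl(\ol\Xs\bigr)$ is a sensible clarification but not a different method.
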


\begin{proof}
Let $\ph\in C(\p\Xs)$ be a continuous test function on $\p\Xs$, and let $f^\ph\in C\left(\ol\Xs\right)$ be its harmonic extension to the whole of 
$\ol\Xs$. Then by the definitions of the harmonic measures and of the weak* convergence
$$
\langle \ka_{\th_n}, \ph \rangle
= \langle \th_n , f^\ph \rangle
\to \langle \th_\infty, f^\ph \rangle
= \langle \th_\infty, \ph \rangle \;,
$$
whence the claim.
\end{proof}

\begin{cor} \label{cor:cc}
Under the conditions of \prpref{prp:cc}, let $\th_n\in\Prob(\Xs)$ be a sequence of measures 
escaping to infinity on $\Xs$ (i.e., such that $\th_n(x)\to 0$ for any $x\in\Xs$). 
Then the sequence $\th_n$ converges if and only if the sequence of the harmonic 
measures $\ka_{\th_n}$ converges, and the limits of these two sequences coincide.
\end{cor}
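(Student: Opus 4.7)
The plan is to treat the forward implication as an immediate consequence of \prpref{prp:cc} and to deduce the converse by a standard subsequential-compactness argument. First, for the forward direction, suppose $\th_n$ converges weak$^*$ on $\ol\Xs$ to some $\th_\infty\in\Prob(\ol\Xs)$. Since $\Xs$ sits as an open discrete subset of its compactification, each singleton $\{x\}\subset\Xs$ is clopen in $\ol\Xs$, hence $\1_{\{x\}}\in C(\ol\Xs)$, and the escape hypothesis gives $\th_\infty(\{x\})=\lim_n\th_n(x)=0$ for every $x\in\Xs$. Thus $\th_\infty\in\Prob(\p\Xs)$, and \prpref{prp:cc} immediately yields $\ka_{\th_n}\to\th_\infty$ weak$^*$, with the limits coinciding by construction.

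For the converse, assume $\ka_{\th_n}\to\ka_\infty\in\Prob(\p\Xs)$. Assumption \eqref{ass:sc} makes $\p\Xs$ and hence $\ol\Xs$ compact separable, so the weak$^*$ topology on $\Prob(\ol\Xs)$ is both compact and metrizable. I would then argue via the standard subsequence criterion: given any subsequence of $(\th_n)$, extract a further weak$^*$ convergent subsequence $\th_{n_k}\to\th'\in\Prob(\ol\Xs)$; the escape argument above forces $\th'\in\Prob(\p\Xs)$; by the already-proved forward direction, $\ka_{\th_{n_k}}\to\th'$; but as a subsequence of the convergent sequence $\ka_{\th_n}\to\ka_\infty$, it must also converge to $\ka_\infty$, so $\th'=\ka_\infty$. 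Every weak$^*$ convergent subsequence of $(\th_n)$ having the same limit $\ka_\infty$, the full sequence $(\th_n)$ converges to $\ka_\infty$ in $\Prob(\ol\Xs)$.

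I do not expect any genuine obstacle: the argument is formal once \prpref{prp:cc} is available, the only supplementary input being the metrizability of the weak$^*$ topology on $\Prob(\ol\Xs)$ granted by \eqref{ass:sc} together with the clopenness of points of $\Xs$ in $\ol\Xs$. The same reasoning shows, slightly more generally, that the sequences $(\th_n)$ and $(\ka_{\th_n})$ have the same weak$^*$ cluster points in $\Prob(\ol\Xs)$, which is what will be needed later when one wishes to transfer convergence statements from sequences of populations to sequences of harmonic measures.
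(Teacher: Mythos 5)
Your proof is correct and follows essentially the same route as the paper: weak* compactness of $\Prob\left(\ol\Xs\right)$, extraction of convergent subsequences whose limits are forced onto $\p\Xs$ by the escape hypothesis, and an appeal to \prpref{prp:cc}. The only cosmetic difference is that the paper phrases the converse as a contradiction between two distinct cluster points of $(\th_n)$, whereas you use the equivalent subsequence criterion and also spell out the (easy) forward direction explicitly.
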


\begin{proof}
The claim follows from the compactness of the space $\Prob\left(\ol\Xs\right)$ in the 
weak* topology. Indeed, if $\ka_{\th_n}$ is convergent, whereas $\th_n$ is not, then 
by the compactness the sequence~$\th_n$ has at least two different limit measures 
$\th_\infty^1,\th_\infty^2$ which by the escape assumption are supported by $\p\Xs$. 
By taking sub-sequences of $\th_n$ converging to $\th_\infty^1$ and to $\th_\infty^2$, 
respectively, one then arrives at a contradiction with \prpref{prp:cc}.
\end{proof}

\subsection{Population convergence}

We finally come to the application, resp. extension of the results from subsection \ref{subsec:hars}
and subsection \ref{subsec:cdp} to the setup of branching Markov chains (see subsection \ref{subsec:bd}).
We recall that, given a map $\ka:\Xs\to\Prob(\Ks)$, we denote by
$$
\ka_m = \sum_{x\in m} \ka_x \;, \qquad m\in\Ms \;,
$$
its extension \eqref{eq:kamu} to the population space $\Ms=\Zp(\Xs)$ over $\Xs$, so that, in particular,
\begin{equation} \label{eq:kam}
\| \ka_m \| = \| m \| \qquad\forall\, m\in \Ms \;.
\end{equation}
The \sfemph{normalisation} 
\begin{equation}\label{eq:nor}
\dol{\ka}_m = 
\frac{1}{\|m\|} \, \ka_m\in \Prob(\Ks)
\end{equation}
is then the \emph{average} of the measures $\ka_x$ over a population $m$ (where $m$, as always, is treated as a multiset).

\begin{thm} \label{thm:kconv}
If
\begin{enumerate}
\item[{\rm (1)}]
a branching Markov chain on the state space $\Xs$ has constant branching ratio $\rho>1$, and
its offspring distributions satisfy the uniform $L\log L$ moment condition,
\item[{\rm (2)}]
$(\Ks,\ka)$ is a separable compact stationary space for the underlying Markov chain on~$\Xs$,
\end{enumerate}
then
\begin{enumerate}
\item[{\rm (I)}]
for almost every sample path $\Mb=(M_n)$ of the branching Markov chain there exists the limit
$$
\kb_\Mb = \wlim_{n\to\infty} \frac{1}{\rho^n}\, \ka_{M_n}\,,
$$
which is a positive finite Borel measure on $\Ks$;
\item[{\rm (II)}]
the barycentre of the measures $\{\kb_\Mb\}$ with respect to any distribution $\pmb\Pe_{\!x},\;x\in\Xs$, on the path space of the branching Markov chain is $\ka_x\,$:
$$
\pmb\Ee_x(\kb_\Mb) = \ka_x\,.
$$
\end{enumerate}
In particular, if
\begin{enumerate}
\item[{\rm (3)}]
$\ol\Xs=\Xs\cup\p\Xs$ is a compactification of the state space $\Xs$ which satisfies \eqref{ass:sc} and
is stochastically resolutive with respect to the underlying Markov chain,
\end{enumerate}
then {\rm (I)} and {\rm (II)} hold for the associated family of hitting distributions on
the boundary~$\p\Xs$.
Furthermore, if in addition
\begin{enumerate}
\item[{\rm (4)}]
the compactification is Dirichlet regular for the underlying Markov chain,
\end{enumerate}
then
\begin{enumerate}
\item[{\rm (III)}] For almost every sample path of the branching Markov chain,
$$
\kb_\Mb = \wlim_{n\to\infty} \frac{1}{\rho^n}\,M_n\,.
$$
\end{enumerate}
\end{thm}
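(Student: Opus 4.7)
The plan is to extract, for each $\ph \in C(\Ks)$, a scalar martingale whose convergence is handled by Theorem \ref{thm:ll}, glue these limits into a random measure using the separability of $\Ks$, and then deduce (III) by combining (I) with Lemma \ref{lem:disappear} and Corollary \ref{cor:cc}.

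For $\ph \in C(\Ks)$ the function $f^\ph(x) = \langle \ka_x, \ph \rangle$ is bounded by $\|\ph\|_\infty$ and $P$-harmonic by Remark \ref{rem:stat}, so Corollary \ref{cor:har} makes its lift $\wt{f^\ph}(m) = \langle \ka_m, \ph \rangle$ to $\Ms$ a $\rho$-harmonic function for $\Pc$. Hence
$$
Z_n^\ph = \frac{1}{\rho^n}\,\langle \ka_{M_n}, \ph \rangle
$$
is a martingale under every $\pmb\Pe_{\!\Th}$, pathwise dominated by $\|\ph\|_\infty W_n$. The uniform $L\log L$ hypothesis and Theorem \ref{thm:ll} make $(W_n)$ uniformly integrable, hence so is $(Z_n^\ph)$, and therefore $Z_n^\ph \to \Kb(\ph)$ almost surely and in $L^1$ with $\pmb\Ee_x \Kb(\ph) = f^\ph(x) = \langle \ka_x, \ph \rangle$.

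The main technical step is to assemble these scalar limits into a random measure. Using separability of $\Ks$, choose a countable dense $\Phi \subset C(\Ks)$ and discard both the countable union of null sets on which some $Z_n^\ph$, $\ph \in \Phi$, diverges and the null set $\{\sup_n W_n = \infty\}$, negligible by Doob's maximal inequality applied to the non-negative martingale $(W_n)$. On the resulting event of full measure, $\ph \mapsto \Kb(\ph)$ is a positive linear functional on $\Phi$ of norm at most $\sup_n W_n < \infty$, hence extends by continuity to a positive bounded functional on $C(\Ks)$, represented by a finite positive Borel measure $\kb_\Mb$ of total mass $\Kb(\1) = W_\infty$. A standard approximation with the uniform bound upgrades $Z_n^\ph \to \langle \kb_\Mb, \ph \rangle$ from $\Phi$ to all $\ph \in C(\Ks)$, proving (I); (II) follows by integrating against $\pmb\Pe_{\!x}$ and using $L^1$ convergence. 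The ``in particular'' clause is the instantiation with the hitting distributions, which under \eqref{ass:sc} form a compact separable stationary system on $\p\Xs$.

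For (III), write $\frac{1}{\rho^n} M_n = W_n\,\dol M_n$ with $\dol M_n \in \Prob(\Xs)$. By Theorem \ref{thm:pos}, $W_\infty > 0$ almost surely, so dividing the weak$^*$ limit in (I) by $W_n \to W_\infty$ gives $\ka_{\dol M_n} = \frac{1}{\|M_n\|}\ka_{M_n} \to W_\infty^{-1}\kb_\Mb$ weak$^*$ on $\p\Xs$. Lemma \ref{lem:disappear} shows $\dol M_n$ escapes to infinity in $\Xs$, so under Dirichlet regularity Corollary \ref{cor:cc} upgrades this to weak$^*$ convergence $\dol M_n \to W_\infty^{-1}\kb_\Mb$ in $\ol\Xs$; multiplying back by $W_n \to W_\infty$ yields (III). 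The principal obstacle is precisely the joint almost-sure construction in the previous paragraph: producing a single null set good for every $\ph \in C(\Ks)$ requires both the countable dense subset supplied by separability of $\Ks$ and the pathwise a.s.\ boundedness of $(W_n)$ from Doob's maximal inequality, without which the scalar limits $\Kb(\ph)$ could not be glued into a continuous functional on all of $C(\Ks)$.
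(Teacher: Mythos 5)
Your proof is correct and follows essentially the same route as the paper's: extract the scalar martingales $\frac{1}{\rho^n}\langle \ka_{M_n},\ph\rangle$ dominated by $\|\ph\|_\infty W_n$, use Theorem \ref{thm:ll} for uniform integrability, glue the limits over a countable dense $\Phi\subset C(\Ks)$ into a positive functional of norm $W_\infty$, and deduce (III) from Corollary \ref{cor:cc}. Your treatment of (III) is in fact slightly more explicit than the paper's (which compresses the normalisation by $W_n$, the positivity from Theorem \ref{thm:pos}, and the escape to infinity from Lemma \ref{lem:disappear} into a one-line reference), and your use of $\sup_n W_n<\infty$ in place of the paper's bound by $W_\infty$ is an inessential variation.
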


\begin{proof}
The argument for the proof of (I) and (II) is essentially the same as in the proof of
\prpref{prp:c} (which could potentially be generalised to allow the measures from a harmonic
family to be not necessarily normalised and to depend on time, cf.\ \secref{subsec:hm}). The
only difference is that the arising martingales of the branching Markov chain are not bounded.
Still, they are dominated by the uniformly integrable \emph{population martingale}.

Let us first take a test function $\ph\in C(\Ks)$, let
\begin{equation} \label{eq:hh}
f(x) = f^\ph(x) = \langle \ka_x, \ph \rangle
\end{equation}
be the corresponding harmonic function of the underlying Markov chain on $\Xs$, and let
$$
\wt f(m) = \langle m, f \rangle = \langle \ka_m, \ph \rangle
$$
be its lift to $\Ms$. Then by \corref{cor:har} and \remref{rem:stat} the function $\wt f$ is $\rho$-harmonic for the branching Markov chain, whence the sequence of random variables 
$$
W_n^f
= \frac{1}{\rho^n}\,\wt f(M_n) = \frac{1}{\rho^n}\,\langle \ka_{M_n}\,, \ph \rangle
$$
on the path space of the branching Markov chain is a martingale with
\begin{equation} \label{eq:wph}
\bigl| W_n^f
\bigr| \le  \frac{1}{\rho^n}\,\| \ka_{M_n} \| \cdot \|\ph\|
=  \frac{1}{\rho^n}\,\|M_n\| \cdot \|\ph\|
= W_n
\cdot \|\ph\| \,,
\end{equation}
where $W_n = W_n(\Mb)$ is the population martingale of Definition \ref{dfn:pop}.
Then \thmref{thm:ll} on the uniform integrability of 
$(W_n)$ 
implies the uniform integrability of the martingale $(W_n^f)$ as well, so that the limit
\begin{equation} \label{eq:lw}
W_\infty^f(\Mb) = W_\infty^f
=  \lim_n W_n^f
\end{equation}
exists for almost all sample paths and has the property that
\begin{equation} \label{eq:li}
\pmb\Ee_x W_\infty^f = \pmb\Ee_x W_0^f = f(x) = \langle \ka_x, \ph \rangle
\qquad\forall\,x\in\Xs \;.
\end{equation}

If $\Phi\subset C(\Ks)$ is a countable dense subset, then there is a common co-negligible subset~$\Om$ of the path space such that the limit \eqref{eq:lw} exists and satisfies \eqref{eq:li} for all $\Mb\in\Om$ and any $\ph\in\Phi$. By \eqref{eq:wph}
$$
\bigl| W_\infty^{f_1}
- W_\infty^{f_2}
\bigr| \le W_\infty(\Mb) \cdot \| \ph_1 - \ph_2 \|
\qquad\forall\,\ph_1,\ph_2\in\Phi\,,
$$
where $f_i=f^{\ph_i},\;i=1,2,$ are the harmonic functions \eqref{eq:hh} associated with the
functions~$\ph_i$. Therefore, the limit \eqref{eq:lw} exists on $\Om$ for all $\ph\in C(\Ks)$ and
satisfies condition~\eqref{eq:li}. For any fixed realisation of $\Mb$ on $\Om$, it defines a
positive linear functional on $C(\Ks)$ whose norm is $W_\infty^\1(\Mb)=W_\infty(\Mb)$, i.e.,
a non-negative Borel measure $\kb_\Mb$ with total mass
\begin{equation} \label{eq:norm}
\|\kb_\Mb\|=W_\infty(\Mb)
\end{equation}
which is strictly positive by \thmref{thm:pos}. The identity (II) is then precisely the fact that \eqref{eq:li} is satisfied for all $\ph\in C(\Ks)$.
Finally, the existence of a stochastically resolutive compactification obviously implies the transience of the underlying chain, and therefore (I) implies (III) in view of \corref{cor:cc}.
\end{proof}

In the course of the proof of \thmref{thm:kconv} we have seen, in formula \eqref{eq:norm}, that the norm of the limit measure $\ka_\Mb$ is the limit $W_\infty(\Mb)$ of the population martingale, whence we get the following.

\begin{thm} \label{thm:emp}
Under conditions {\rm (1)} and {\rm (2)} of \thmref{thm:kconv} for almost all sample paths $\Mb=(M_n)$ of the branching Markov chain the averages
$$
\dol\ka_{\! M_n} = \frac{1}{\|M_n\|}\, \ka_{M_n} = \frac1{\|M_n\|} \sum_{x\in M_n} \ka_x
$$
converge in the weak* topology of $\Prob(\Ks)$ to the probability measure $\dol\kb_{\!\Mb}$ (the normalisation of the measure $\kb_\Mb$ from \thmref{thm:kconv}), and
$$
\ka_x = \pmb\Ee_x\bigl(W_\infty(\Mb)\cdot\dol\kb_{\!\Mb}\bigr)\,.
$$
In particular, this is the case for the boundary $\p\Xs$ of any stochastically resolutive compactification
satisfying \eqref{ass:sc} $\ol\Xs=\Xs\cup\p\Xs$ of the state space of the underlying Markov chain endowed with the family of the arising hitting measures. Moreover, if the compactification is Dirichlet regular, then the empirical distributions 
$$
\dol M_n = \frac{1}{\|M_n\|}\,M_n
$$
converge almost surely to the limit measure $\dol\kb_{\!\Mb}$ in the weak* topology of
$\Prob\left(\ol\Xs\right)$.
\end{thm}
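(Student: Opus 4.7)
The plan is to obtain this statement almost immediately from \thmref{thm:kconv}, using the strict positivity of $W_\infty$ from \thmref{thm:pos} together with the norm identity $\|\kb_\Mb\|=W_\infty(\Mb)$ recorded at \eqref{eq:norm}. The engine of the argument is the trivial rewriting
\[
\dol\ka_{M_n}
= \frac{1}{\|M_n\|}\,\ka_{M_n}
= \frac{1}{W_n(\Mb)}\cdot\frac{1}{\rho^n}\,\ka_{M_n},
\]
which is valid on every sample path by \eqref{ass:ne} and which exhibits the normalised measure as a scalar rescaling of the unnormalised measure whose weak* limit is already controlled by \thmref{thm:kconv}(I).

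I would fix a co-negligible set of paths on which $\frac{1}{\rho^n}\,\ka_{M_n}\to\kb_\Mb$ weak* in $\Meas(\Ks)$, on which $W_n(\Mb)\to W_\infty(\Mb)$, and on which $W_\infty(\Mb)>0$; these three properties hold almost surely by \thmref{thm:kconv}(I), the definition of $W_\infty$, and \thmref{thm:pos}, respectively. On such paths, weak* convergence of positive measures is preserved by division by a positive scalar sequence converging to a nonzero limit, so $\dol\ka_{M_n}$ converges weak* to $W_\infty(\Mb)^{-1}\kb_\Mb$, which is precisely the probability measure $\dol\kb_\Mb$ in view of $\|\kb_\Mb\|=W_\infty(\Mb)$. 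The identity $\ka_x=\pmb\Ee_x(W_\infty(\Mb)\cdot\dol\kb_\Mb)$ is then just \thmref{thm:kconv}(II) rewritten using $\kb_\Mb=W_\infty(\Mb)\cdot\dol\kb_\Mb$, and the stochastically resolutive case is recovered by specialising $(\Ks,\ka)$ to $(\p\Xs,\ka_x)$ with $\ka_x$ the hitting distributions of \dfnref{dfn:sr}.

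For the Dirichlet regular case the same rescaling trick applies verbatim with \thmref{thm:kconv}(III) in place of (I): from $\frac{1}{\rho^n}\,M_n\to\kb_\Mb$ weak* in $\Meas\bigl(\ol\Xs\bigr)$ almost surely, together with the continuity of the constant function $1$ on the compact space $\ol\Xs$, one has $W_n(\Mb)=\bigl\|\tfrac{1}{\rho^n}\,M_n\bigr\|\to\|\kb_\Mb\|=W_\infty(\Mb)>0$, and therefore $\dol M_n=W_n(\Mb)^{-1}\cdot\frac{1}{\rho^n}\,M_n$ converges weak* to $\dol\kb_\Mb$ in $\Prob\bigl(\ol\Xs\bigr)$.

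I anticipate no serious obstacle, since all of the substantive work is already packaged into theorems~\ref{thm:kconv} and~\ref{thm:pos}; what remains is only the elementary observation that normalisation is a continuous operation on the cone of nonzero positive measures. The one point worth noting explicitly is that $\dol\kb_\Mb$ in the Dirichlet regular setting is supported on the boundary~$\p\Xs$ rather than on~$\Xs$, which is automatic from the set-up (the limit measure lives on~$\p\Xs$ by \thmref{thm:kconv}(I) applied to $(\p\Xs,\ka)$) and is moreover consistent with \lemref{lem:disappear}.
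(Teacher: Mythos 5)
Your proposal is correct and follows essentially the same route as the paper, which presents \thmref{thm:emp} as an immediate consequence of \thmref{thm:kconv} via the norm identity \eqref{eq:norm} ($\|\kb_\Mb\|=W_\infty(\Mb)$) together with the almost sure strict positivity of $W_\infty$ from \thmref{thm:pos}. The rescaling $\dol\ka_{M_n}=W_n(\Mb)^{-1}\cdot\rho^{-n}\ka_{M_n}$ and the observation that normalisation is continuous on the cone of nonzero positive measures is exactly the (only implicitly written) argument the authors intend.
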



\begin{rem}\label{rem:trivial}
 Under conditions {\rm (1)} and {\rm (2)} of \thmref{thm:kconv}, the random limit probability measure
$\dol\kb_{\!\Mb}$ is a \sfemph{random} point mass if and only if there is a \sfemph{deterministic}
element $ \kt \in \Ks$ such that $\ka_x = \delta_z$ for all $x \in \Xs$. In this case, also $\dol\kb_{\!\Mb} = \delta_z$ is deterministic.
\end{rem}

\begin{proof}
The ``if'' as well as the last statement are obvious.
For the interesting part, we need some refined notation.
We write $\Mb^x = (M_n^x)_{n \ge 0}$ for the branching Markov chain
starting at time $0$ with one particle at position $x \in \Xs$, and the
other related objects will also be equipped with the superscript $x$. In particular,
we denote by $\dol{\ka}_n^x$ the normalised measure associated with the population
at time $n$ according to \eqref{eq:nor}, that is, $\dol{\ka}_n^x = \dol{\ka}_{M_n^x}\,$.

Now let $t, n \in \Zp\,$. Then
$$
\dol{\ka}_{t+n}^x = \sum_{y \in M_t^x} \frac{\|M_n^y\|}{\|M_{t+n}^x\|}\dol{\ka}_n^y\,.
$$
Here and below, one must observe (without adding further involved notation) that the
elements $y \in M_t^x$ appear according to their multiplicity, and the respective
norms $\|M_n^y\|$ and measures $\dol{\ka}_n^y$ are independent (in particular, not identical).
If we let $n \to \infty$ and apply Theorem \ref{thm:emp} then we get
\begin{equation}\label{eq:convcomb}
\dol{\ka}_{\Mb}^x = \sum_{y \in M_t^x} \frac{W_\infty(\Mb^y)}{\rho^t\, W_\infty(\Mb^x)}
\dol{\ka}_{\Mb}^y\,
\end{equation}
and the respective limits $W_\infty(\Mb^y)$ and  limit measures $\dol{\ka}_{\Mb}^y$
are independent among themselves (including multiple appearances), but not independent
of $\Mb^x$. The sum in \eqref{eq:convcomb} is a convex combination with a.s. strictly
positive coefficients by Theorem \ref{thm:pos}.

We now take $t$ to be the first moment when $\|\Mb_t^x\| \ge 2$. By our assumptions,
this is an a.s. finite stopping time. If $\dol{\ka}_{\Mb}^x = \delta_{\zeta}$ for a
\emph{random} $\zeta \in \Ks$ then also $\dol{\ka}_{\Mb}^y = \delta_{\zeta}$ for all
$y \in M_n^x$. But the latter measures -- at least 2 -- are independent, and it
is a straightforward exercise
that $\zeta$ must be deterministic.
\end{proof}

We note that the last proposition is related to the issue of triviality of the
Poisson boundary. The latter will be considered further below.

\subsection{Adaptedness conditions} \label{subsec:adapt}

Having in mind the above boundary convergence results, we are now going to list several compactifications of the state space $\Xs$ of a discrete Markov chain (to be thought of as the underlying chain of a branching Markov chain) and comment upon the key properties of these compactifications required in \thmref{thm:kconv}: stochastic resolutivity and Dirichlet regularity.
%
Suppose that $\Xs$ carries a certain geometric, algebraic or combinatorial structure, and that the
transition operator $P$ is adapted in some way (to be specified in more detail) to that
structure. In this situation the Markov chain is usually called \emph{random walk} (so
that the corresponding branching Markov chain becomes a \emph{branching random walk},
cf.\ \exref{ex:gr}.)
How does its adaptedness affect the behaviour of the chain?
For the next considerations, we assume that $\Xs$ carries the structure of an unoriented
infinite graph which is
\begin{equation} \label{ass:lfc} \tag{\sfemph{LFC}}
\cond{
\sfemph{locally finite}, i.e., for every vertex $x\in\Xs$ the cardinality 
$\deg(x)$ of its set of neighbours $\Nc(x)$ is finite, and 
\sfemph{connected}.
}
\end{equation}
We denote by $\Ec(\Xs)\subset\Xs\times\Xs$ the \sfemph{edge set} of $\Xs$ and write $d(x,y)$ for the \sfemph{graph distance} on~$\Xs\,$. 
We recall that the transition operator $P$ is always assumed to satisfy condition \eqref{ass:tc}, i.e.,
to be transient and to have pairwise communicating states. Here is a list of different
basic \emph{geometric}
adaptedness conditions. The random walk $(\Xs,P)$ with the transition probabilities
$p(x,y)=p_x(y)$ is said to be

{\setlength{\leftmargini}{23pt}
\begin{itemize}
\item
\sfemph{simple}, if for any $x\in\Xs$ the transition measure $p_x$ is equidistributed of its set of neighbours $\Nc(x)$, i.e., $p(x,y) = 1/\deg(x)$ for $[x,y]\in E(\Xs)$, and $p(x,y)=0$, otherwise.
\item
\sfemph{nearest neighbour}, if
$p(x,y)>0$ only when $[x,y]\in E(\Xs)$.
\item
of \sfemph{bounded range}, if there is $R<\infty$ such that $p(x,y)>0$ only when $d(x,y)\le R$.
\item
\sfemph{uniformly irreducible}, if there are $N<\infty$ and $\ep>0$ such that for any pair
$(x,y)\in\Ec(\Xs)$ there is a time $n\le N$ with $p^{(n)}(x,y)\ge\ep$.
\end{itemize}}

One can also impose various \emph{tightness} or \emph{moment} conditions on the distributions of the
distances $d(x,y)$ with respect to the transition probabilities $p(x,y)$, e.g., the \emph{uniform first
moment} condition from \dfnref{dfn:tight} (see \textsc{Kaimanovich -- Woess}
\cite[Section~3]{Kaimanovich-Woess92} for a detailed discussion).

\medskip

We now consider \emph{algebraic} adaptedness.
A graph $\Xs$ is called \sfemph{vertex transitive} if the action of its
\sfemph{group of automorphisms} $\Aut(\Xs)$ on the vertex set acts  transitively on the
vertex set. This is the case for the \emph{Cayley graph} of any finitely generated group with respect
to a finite symmetric set of generators $S$ (i.e., $[x,y]$ is an edge if and only if $x^{-1}y\in S$).
There are also vertex transitive graphs which are \emph{not} Cayley graphs.

Given a transition operator $P$ on a state space $\Xs$ (not necessarily endowed with a graph structure), one can define the \sfemph{automorphism group} of the Markov chain $(\Xs,P)$ as
$$
\Aut(\Xs,P)
= \{ g \in \Perm(\Xs) : p(gx,gy)=p(x,y) \quad \forall\, x,y \in \Xs \} \;.
$$
where $\Perm(\Xs)$ denotes the group of all permutations (not necessarily finitely supported!) of $\Xs$.
A natural algebraic adaptedness condition in this situation is to require that
$\Aut(\Xs,P)$ (or a subgroup) act transitively on $\Xs$, or at least \sfemph{quasi-transitively},
%
which consists in requiring that the action of the corresponding automorphism group have finitely
many orbits. 
This condition is satisfied for so-called \emph{random walks with internal degrees of freedom} (also known under numerous other names, in particular, as \emph{semi-Markov, covering,} or \emph{coloured} chains), e.g., see \textsc{Kaimanovich -- Woess} \cite{Kaimanovich-Woess02} and the references therein.

\subsection{A zoo of compactifications} \label{sec:zoo}

We now display several ``geometric'' compactifications of a graph $\Xs$ satisfying conditions \eqref{ass:lfc} and discuss if and how \thmref{thm:kconv} applies.

\begin{ex}[end compactification]
This definition goes back to Freudenthal \cite{Freudenthal45} and Halin \cite{Halin64}.
We denote by $\Cc(F)$ the (finite!) collection of all infinite connected components of
the graph $\Xs\setminus F$ obtained from $\Xs$ by removing a finite set of edges $F \subset \Ec(\Xs)$.
The \sfemph{end compactification} $\ol\Xs_E = \Xs \cup \p_E\Xs$ is the unique (up to homomorphisms)
minimal compactification of $\Xs$ to which all the indicator functions $\1_C$ of connected components
$C\in\Cc(F)$ extend continuously. The \sfemph{space of ends} $\p_E\Xs$ is the projective limit of the
discrete spaces $\Cc(F)$ as $F\to\Xs$. There is also a more explicit graph-theoretical description.
\end{ex}

\begin{ex}[hyperbolic compactification]
A graph $\Xs$ is called \sfemph{hyperbolic}, if it is a \emph{Gromov-hyperbolic metric space} with respect to the standard graph metric.  We refrain from re-stating all features of hyperbolic spaces and groups.
See the original paper by \textsc{Gromov} \cite{Gromov87} (and its numerous renditions), or, in the context
of random walks on graphs and groups, \textsc{Woess} \cite[\S 22]{Woess00}. A hyperbolic graph $\Xs$ has a
\sfemph{hyperbolic compactification} $\ol\Xs_H$ with the \sfemph{hyperbolic boundary} $\p_H \Xs$.
\end{ex}

\begin{ex}[Floyd compactification]
Let $\fl: \Zp \to (0,\infty)$ be a summable function such that there is $0<c<1$ with
$$
c\,\fl(n) \le \fl(n+1) \le \fl(n) \qquad \forall\,n\in\Zp \;.
$$
We define the \sfemph{$\fl$-length} $\ell_\fl(\pi)$ of any finite path $\pi$ in $\Xs$ as the sum of the $\fl$-lengths
$$
\ell_\fl(e) = \min\{ \fl(|x|), \fl(|y|)\}
$$
of its edges $e=[x,y]$, where as usual $|x| = d(x,o)$ denotes the graph distance between $x$ and a fixed root vertex $o$. The \sfemph{Floyd distance} on $\Xs$ is the resulting path metric
$$
d_\fl(x,y) = \inf \{ \ell_\fl(\pi) : \pi \textrm{ is a finite path from $x$ to $y$ } \} \;.
$$
We denote by $\ol\Xs_\fl$ the completion of $\Xs$ with respect to this metric, with the resulting
\emph{Floyd boundary} $\p_\fl\Xs = \ol\Xs_\fl \setminus \Xs$. The space $\ol\Xs_\fl$ is compact and does not
depend on the choice of the root $o$ (the identity map on $\Xs$ extends to a homeomorphisms between the
compactifications corresponding to different roots), see \textsc{Floyd} \cite{Floyd80} and \textsc{Karlsson}
\cite{Karlsson03, Karlsson03a, Karlsson03b}.
\end{ex}

The end, the hyperbolic (provided the graph is hyperbolic), and the Floyd compactifications have the following common features (see the aforementioned references):
{\setlength{\leftmargini}{23pt}
\begin{itemize}
\item
The action of the group of automorphisms $\Aut(\Xs)$ on $\Xs$ extends to its action on the whole compactification by homeomorphisms;
\item
If the graph is vertex-transitive, then the boundary of the compactification consists of one, two, or uncountably many points;
\item
All these compactifications are \emph{contractive $\Aut(\Xs)$-compactifications} in the sense of \textsc{Woess} \cite{Woess93}.
\end{itemize}}

The Floyd compactification is \sfemph{finer} than the end one, i.e., there exists a (necessarily surjective)
continuous map $\pi:\ol\Xs_\fl\to\ol\Xs_E$ (an extension of the identity map on $\Xs$) such that the embedding $\Xs\ha\ol\Xs_E$ is the composition of the embedding $\Xs\ha\ol\Xs_\fl$ with $\pi$. If the
graph $\Xs$ is hyperbolic, then the hyperbolic compactification is intermediate between the other two, i.e.,
it is coarser than the Floyd one and finer than the end one (the latter fact was, to our knowledge, first
explicitly stated by \textsc{Pavone} \cite{Pavone89}):
$$
\begin{tikzcd}
\ol\Xs_\fl \arrow[rd,dashrightarrow] \arrow[rr] & & \ol\Xs_E  \\
& \ol\Xs_H \arrow[ru,dashrightarrow] &
\end{tikzcd}
$$
Note that in general even the vertex-transitive graphs with infinitely many ends may be quite far from being
hyperbolic.

The following result from \textsc{Woess} \cite[Section 4]{Woess93} provides sufficient conditions for the
applicability of \thmref{thm:kconv}:

\begin{prp}
Let $\Xs$ be a 
graph satisfying  \eqref{ass:lfc}, and let $\ol\Xs$ be one of its
compactifications\,---\,the end, the hyperbolic (if $\Xs$ is hyperbolic), or the Floyd one\,---\,with
infinite boundary $\p\Xs$. If the group $\Aut(\Xs,P)$ acts quasi-transitively on $\Xs$ and 
does not fix a boundary
point, then the compactification is Dirichlet regular.
\end{prp}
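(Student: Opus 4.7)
The plan is to verify the two criteria in \prpref{prp:dir}, namely (i) stochastic resolutivity and (ii) $\wlim_{x\to\xi}\ka_x=\de_\xi$ for every $\xi\in\p\Xs$. The structural ingredient that drives both is the fact, noted in the bulleted list above, that each of the three compactifications is a \emph{contractive} $\Aut(\Xs)$-compactification in the sense of \textsc{Woess}~\cite{Woess93}: any sequence $(g_n)\subset\Aut(\Xs)$ with $g_no\to\xi\in\p\Xs$ has a subsequence along which there is a point $\eta\in\ol\Xs$ such that $g_n\mu\to\de_\xi$ weak* for every $\mu\in\Prob(\ol\Xs)$ with $\mu(\{\eta\})=0$. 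Quasi-transitivity of $\Aut(\Xs,P)$ lets us transport arbitrary boundary-convergent sequences in $\Xs$ to orbits of a finite set of base points, and the no-fixed-point hypothesis is what ultimately rules out degenerate stationary measures concentrated at isolated invariant boundary atoms.

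First I would establish (i). Weak*-compactness of $\Prob(\ol\Xs)$ together with the dual action $\mu\mapsto\mu P$ yields a $P$-stationary probability measure $\nu$ on $\ol\Xs$, and transience \eqref{ass:tc} forces $\supp\nu\subset\p\Xs$. Regarding this $\nu$ as part of a harmonic system on the compact stationary space $(\p\Xs,\ka^\nu)$ with $\ka^\nu_x=\sum_y G(x,y)(\cdots)$ replaced by the hitting-type construction of \prpref{prp:c}, the martingale $\ph\mapsto\langle\ka^\nu_{X_n},\ph\rangle$ converges for every $\ph\in C(\ol\Xs)$, giving an a.s.\ weak* limit $\nu_\Xb$ of $\ka^\nu_{X_n}$. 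Quasi-transitivity provides, for almost every sample path, automorphisms $g_n\in\Aut(\Xs,P)$ with $g_no$ at bounded distance from $X_n$; contractivity along a subsequence then collapses $g_n\nu$ onto $\de_{X_\infty}$ off a single bad point $\eta$. The no-fixed-point hypothesis rules out $\nu(\{\eta\})>0$ (for otherwise the mass at $\eta$ would yield an $\Aut(\Xs,P)$-invariant boundary atom), so $\nu_\Xb=\de_{X_\infty}$ almost surely for a boundary-valued random variable $X_\infty$, which is stochastic resolutivity.

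Next I would deduce (ii). Fix a finite set of orbit representatives $o_1,\dots,o_k$ under $\Aut(\Xs,P)$, and let $x_n\to\xi\in\p\Xs$. Choose $g_n\in\Aut(\Xs,P)$ and indices $i_n$ with $g_no_{i_n}=x_n$; passing to a subsequence we may assume $i_n\equiv i$ and that $g_n$ contracts in the sense above, with exceptional point $\eta$. Because hitting measures are equivariant, $\ka_{x_n}=(g_n)_*\ka_{o_i}$. The no-fixed-point hypothesis, together with the $\Aut(\Xs,P)$-quasi-invariance of the hitting distribution, forces $\ka_{o_i}$ to be atom-free on $\p\Xs$ (an atom would be $\Aut(\Xs,P)$-translated to an orbit of atoms of equal mass, and harmonicity plus transience would then produce an invariant finite set of boundary points, contradicting the hypothesis). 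In particular $\ka_{o_i}(\{\eta\})=0$, so $(g_n)_*\ka_{o_i}\to\de_\xi$ weak*, which is exactly condition (ii) of \prpref{prp:dir}.

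The main obstacle I expect is the contractivity mechanism itself, which has a different flavor in each of the three cases: for ends it reflects the combinatorial separation by finite edge cuts, for the hyperbolic compactification it is the north--south dynamics of loxodromic isometries (and one must handle the elliptic/parabolic dichotomy, ruled out by the no-fixed-point assumption), and for the Floyd metric it comes from conformal-type estimates on $d_\fl$ under translation. The second delicate point is the atomlessness of the hitting measure $\ka_{o_i}$ on $\p\Xs$; this is where quasi-transitivity and the absence of a globally fixed boundary point are used in tandem, and it is really the only place where the full strength of the hypothesis enters.
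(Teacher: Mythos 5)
First, a point of reference: the paper does not prove this proposition at all --- it is imported verbatim from \textsc{Woess} [Woess93, Section~4] (the sentence immediately preceding it reads ``The following result from Woess \dots''), so there is no in-paper proof to compare your attempt against. Your sketch does follow the general strategy of the cited source: reduce Dirichlet regularity to the two conditions of \prpref{prp:dir}, and drive both via contractivity of the compactification combined with quasi-transitivity and the no-fixed-point hypothesis. That is the right skeleton.

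As a proof, however, the sketch has genuine gaps. (1) Your construction of the stationary measure is circular or undefined: the Krylov--Bogolyubov argument needs $P$ to act on $C(\ol\Xs)$, i.e.\ a Feller-type extension of $P$ to the compactification, which is not available here without adaptedness/moment hypotheses that the proposition does not assume; and your fallback, ``the hitting-type construction of \prpref{prp:c}'', presupposes convergence to the boundary, which is exactly what part (i) is supposed to establish. One needs a $P$-harmonic family $(\ka^\nu_x)$ on $\p\Xs$ built \emph{before} knowing the chain converges (in the quasi-transitive case this is done via the induced chain on the finite orbit space and translates of a stationary measure --- this is where the real work of the cited proof lives, and it is missing). (2) The hypothesis is only that $\Aut(\Xs,P)$ fixes no boundary \emph{point}, but both of your uses of it (ruling out $\nu(\{\eta\})>0$, and the atomlessness of $\ka_{o_i}$) actually require ruling out an invariant \emph{finite set} of boundary points (the atoms of maximal mass form a finite invariant set; a group can preserve a pair of ends without fixing either). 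Closing this requires the structure theory of quasi-transitive actions on these boundaries (the $0$--$1$--$2$--$\infty$ trichotomy for ends, elementary vs.\ non-elementary actions in the hyperbolic/Floyd cases), which you do not invoke. (3) The passage from weak* convergence $\ka^\nu_{X_n}\to\de_{X_\infty}$ to convergence $X_n\to X_\infty$ in $\ol\Xs$ is asserted rather than argued; it needs the subsequence argument (distinct subsequential limits of $(X_n)$ in $\p\Xs$ would force distinct point-mass limits of the same a.s.\ convergent martingale, and limits inside $\Xs$ are excluded by \eqref{ass:tc}). These are exactly the points where the hypotheses do the work, so a complete proof must supply them.
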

\note{V: I removed the theorem on convergence in the end compactification - as far as I understand, its advantage over \prpref{prp:rd} is just in dealing with the case when the Green function does not vanish.
-- W: EXACTLY, but it also means that one has a case where the empirical distributions converge without Dirichlet regularity.}
The case when $\Aut(\Xs,P)$ fixes a boundary point is a very special one; we omit the details here.

Next, we review the situation when no group invariance is assumed. We recall that the \sfemph{spectral radius} of a transition operator $P$ is defined as
\begin{equation} \label{eq:rr}
r(P) = \limsup_{n\to\infty} \bigl[ p^{(n)} (x,y) \bigr]^{1/n} \,;
\end{equation}
under condition \eqref{ass:tc} it does not depend on the choice of $x,y\in\Xs$ (see, e.g. \textsc{Woess} \cite{Woess00}).

\begin{prp}\label{prp:rd} Suppose that \eqref{ass:tc} holds.
The end compactification is stochastically resolutive if one of the following two conditions holds:
\begin{itemize}
\item[{\rm (i)}]
$P$ has bounded range;
\item[{\rm (ii)}]
$P$ is uniformly irreducible, has a uniform first moment, and $r(P) < 1$.
\end{itemize}
Moreover, it is Dirichlet regular if, in addition to {\rm (i)} or {\rm (ii)}, the following respective conditions hold:
\begin{enumerate}
\item[{\rm (i$'$)}]
under condition (i): the Green kernel vanishes at infinity;
\item[{\rm (ii$'$)}]
under condition (ii): there are $C > 0$ and $r<1$ such that
\begin{equation}\label{eq:Crho}
p^{(n)}(x,y) \le C\,r^n \qquad \forall\,x,y \in \Xs,\; n \in \NN\,,
\end{equation}
\end{enumerate}
\end{prp}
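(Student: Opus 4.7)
The plan is to establish stochastic resolutivity first and then upgrade to Dirichlet regularity via \prpref{prp:dir}. Convergence of $(X_n)$ to a point of the end boundary $\p_E\Xs$ is equivalent to the following commitment property: for every finite vertex cut $K\subset\Xs$, the trajectory eventually stays inside a single infinite component of $\Xs\setminus K$ ($\Pb_x$-a.s.\ for every $x$). Both parts of the proposition reduce to verifying this property for arbitrary finite $K$.

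Under hypothesis (i) the verification is elementary. A one-step transition $X_n\to X_{n+1}$ between two distinct infinite components of $\Xs\setminus K$ forces $X_n$ to lie in the finite $R$-thickening $K^R=\{y\in\Xs:d(y,K)\le R\}$; by transience \eqref{ass:tc} the chain visits $K^R$ only finitely often almost surely, so intercomponent transitions occur only finitely many times and the commitment property follows. Under hypothesis (ii) the same localisation is unavailable because jumps are unbounded, and it has to be replaced by an analytic estimate. The strategy is to combine the spectral gap $r(P)<1$ (which gives exponential decay of $n$-step probabilities along the diagonal), uniform irreducibility (which promotes such pointwise decay to neighbourhoods), and the uniform first moment (which controls the size of individual jumps) in order to show that the walk crosses any fixed cut $K$ only finitely often a.s. This is essentially the convergence argument carried out in \textsc{Woess} \cite[\S 4]{Woess93} (cf.\ also \cite[\S 21]{Woess00}); one verifies that none of the steps there depends on a group action, so the conclusion transfers to the present non-invariant setting.

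For Dirichlet regularity I invoke \prpref{prp:dir} and show that the hitting measures satisfy $\ka_{x_n}\to\de_\xi$ weakly whenever $x_n\to\xi\in\p_E\Xs$. Equivalently, for any infinite component $C$ of $\Xs\setminus K$ whose closure contains $\xi$, the exit probability $\Pb_x\{\tau_{\Xs\setminus C}<\infty\}$ must vanish as $x\to\xi$ inside $C$. Under (i)+(i'), an exit requires hitting the finite set $K^R$, so by the union bound the exit probability is controlled by $\sum_{y\in K^R} F(x,y)$, and the standard inequality $F(x,y)\le G(x,y)/G(y,y)$ combined with vanishing of the Green kernel delivers the required decay. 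Under (ii)+(ii'), the uniform estimate $p^{(n)}(x,y)\le Cr^n$ makes the tail $\sum_{n\ge N}p^{(n)}(x,y)$ geometrically small, while the uniform first-moment hypothesis, via a Markov-type bound on $d(X_n,x)$, forces $\Pb_x(X_n\in\Xs\setminus C)$ to be small for all $n\le N\sim d(x,K)$; splitting the Green sum at $N$ and letting $x\to\xi$ yields $G(x,y)\to 0$ for each $y$ in the ``boundary layer'' of $C$, from which the claim follows as before.

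The main obstacle is the second step: upgrading the elementary finite-visit argument of the bounded-range case to an analytic estimate that simultaneously controls arbitrarily long jumps and long-time returns. All three ingredients of (ii) are used in an essential way, and the cleanest route is to reduce to the convergence theorem of \textsc{Woess} \cite{Woess93} rather than to rederive that estimate from scratch; the corresponding sharpening needed for (ii$'$) likewise relies on coupling the geometric estimate \eqref{eq:Crho} with the first-moment bound.
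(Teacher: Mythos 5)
The paper offers no proof of \prpref{prp:rd} at all: it is stated as a summary of known results, and \remref{rem:hypfl} simply points to \textsc{Woess} \cite{Woess00}, Sections 21--22, for the proofs. So there is no in-paper argument to compare against; your proposal supplies the standard arguments that those references contain. Your treatment of the bounded-range case is correct and complete in outline: an intercomponent transition forces $X_n$ into the finite set $K^R$, transience \eqref{ass:tc} gives finitely many visits, and under (i$'$) the bound on the exit probability by $\sum_{y\in K^R}F(x,y)$ with $F(x,y)=G(x,y)/G(y,y)$ (this is in fact an equality) together with \prpref{prp:dir} gives Dirichlet regularity. Deferring the convergence statement under (ii) to \cite{Woess93} and \cite[\S 21]{Woess00} is exactly what the paper itself does.

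The one place where your own sketch is genuinely misleading is the Dirichlet-regularity step under (ii)+(ii$'$). You write that letting $x\to\xi$ yields $G(x,y)\to 0$ for each $y$ in the ``boundary layer'' of $C$, ``from which the claim follows as before''. But ``as before'' relied on the bounded-range localisation: without it, an exit from the component $C$ can occur by a single long jump over $K$ into another component, so the event of leaving $C$ is \emph{not} controlled by hitting any finite set, and a union bound over a finite boundary layer proves nothing. What one actually needs is to bound $\sum_{y\in\Xs\setminus C}F(x,y)$, a sum over an \emph{infinite} set; pointwise vanishing of $G(x,\cdot)$ on a finite set is far too weak. The genuine content of (ii)+(ii$'$) here is that \eqref{eq:Crho}, coupled with the uniform first moment and uniform irreducibility, upgrades to a \emph{uniform exponential decay} of the Green kernel in the graph distance, $G(x,y)\le C'\lambda^{d(x,y)}$ with $\lambda<1$, which (together with at most exponential volume growth of spheres, itself a consequence of the adaptedness hypotheses) makes the infinite sum over $\Xs\setminus C$ small when $d(x,K)$ is large. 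Your closing sentence gestures at this coupling, but the displayed reduction to a finite boundary layer would fail as written; if you intend to rely on the literature for this step, you should cite it for the Dirichlet statement as well, not only for the convergence statement.
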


\begin{rem} \label{rem:hypfl}
With small modifications, Proposition \ref{prp:rd} also holds for the other two compactifications.
 \\[3pt]
(a)  If the graph $\Xs$ is hyperbolic and $r(P) < 1$, then \prpref{prp:rd} holds for the hyperbolic compactification of $\Xs$ as well.
\\[3pt]
(b) For the Floyd compactifications in absence of a transitive group action, the case of simple random walk
has been touched  by \textsc{Karlsson} \cite{Karlsson03b}.
This has recently been generalised by \textsc{Spanos}~\cite{Spanos21p}; in particular, stochastic
resolutivity, resp. Dirichlet regularity hold under conditions (ii), resp. (ii)+(ii$'$).
\\[3pt]
For proofs and references regarding the end and hyperbolic cases, as well as
the question of validity of condition \eqref{eq:Crho}, see \textsc{Woess} \cite{Woess00}, in particular Sections 21--22.
\end{rem}

Boundary convergence of Markov chains is a vast and active area, and the purpose of the examples above is just to convey its flavour as a backdrop for \thmref{thm:kconv} rather than to provide any comprehensive overview. Without going into further details, let us mention, for instance, the related work on the visual compactification of Cartan -- Hadamard manifolds, the Busemann (or horospheric) compactification of metric spaces, various compactifications of Riemannian symmetric spaces, boundaries of planar graphs, the Thurston compactification of Teichm\"uller space, etc.\ etc.

Let us finally discuss a compactification of the state space $\Xs$ intrinsically determined just by the transition operator $P$. The latter, as always, is assumed to satisfy condition \eqref{ass:tc}, and therefore a normalisation of the Green kernel produces the \sfemph{Martin kernel}
$$
K_o(x,y) = \frac{G(x,y)}{G(o,y)} \;, \qquad x, y \in \Xs \;,
$$
where $o\in\Xs$ is a fixed reference point. The \sfemph{Martin compactification} is the unique (up to homeomorphisms) minimal compactification $\ol \Xs_M=\Xs\cup\p_M\Xs$ of the state space $\Xs$ to which each function $K_o(x,\cdot),\;x \in \Xs$, extends continuously in the second variable (e.g., see Woess \cite{Woess09} for a detailed exposition). 
The Martin compactification of a \emph{bounded range} Markov operator $P$ on a graph $\Xs$ is known to be comparable with the aforementioned geometric compactifications of $\Xs$ in the following situations:
\begin{itemize}
\item[{\rm (i)}]
it is finer than the end compactification;
\item[{\rm (ii)}]
it coincides with the hyperbolic compactification\,---\,if $\Xs$ is hyperbolic and $r(P)<1$;
\item[{\rm (iii)}]
it is finer than the Floyd compactification\,---\,if $(\Xs,P)$ is a random walk on (the Cayley graph of) a finitely generated group.
\end{itemize}
For (i) and (ii), see \textsc{Woess} \cite[Chapter IV]{Woess00} and the references therein; (iii) is very
recent and due to {\sc Gekhtman, Gerasimov, Potyagailo and Yang}
\cite{Gekhtman-Gerasimov-Potyagailo-Yang21}.

Every positive harmonic function $h$ has an integral representation
$$
h(x) = \int_{\p_M \Xs} K_o(x,\xi)\,d\nu_o^h(\xi)
$$
for a unique Borel measure $\nu_o^h$ on $\p_M\Xs$. The Martin compactification is stochastically
resolutive, and the hitting distribution $\nu_o$ issued from the reference point $o$ is precisely the
representing measure $\nu_o^\1$ of the constant harmonic function $\1(x)\equiv 1$. There are various classes
of Markov chains for which the Martin compactification is Dirichlet regular, but there are also many classes
for which it is not. In any case, at least claim (iii) of \thmref{thm:kconv} always applies to the Martin
compactification.

After mentioning that the Martin boundary considered as a measure space endowed with the family of the representing measures $\nu_o^\1$ is isomorphic to the \emph{Poisson boundary} of the chain, we shall now switch to discussing the boundary behaviour of branching Markov chains in the measure-theoretic setting.

\section{Boundary correspondence} \label{sec:corres}

\subsection{Motivation: topological case}

In the topological setup, as we saw in the previous Section (\thmref{thm:kconv} and \thmref{thm:emp}),
under suitable conditions there is a natural map
$
\;\Mb \mapsto \kb_\Mb\,,
$
which assigns to almost every sample path  of the branching Markov chain $\Mb=(M_n)$ a
finite positive weak* limit measure
$\;
\kb_\Mb \;
$
on a stationary space $\Ks$. The total mass
$\;
\| \kb_\Mb \| 
\;$
is the limit of the population martingale \eqref{eq:pm}, and its normalisation
$\;
\dol\kb_\Mb = \frac{1}{\| \kb_\Mb \|}\,\kb_\Mb 
\;$
is a random probability measure on $\Ks$ which can be interpreted as a limit of the population averages.
In particular, if $\Ks=\p\Xs$ is the boundary of a Dirichlet regular compactification
of the state space $\Xs$, then $\dol\kb_\Mb$ is the weak* limit of the empirical distributions
$$
\dol M_n = \frac{1}{\| M_n\|}\,M_n
$$
on the populations $M_n\,$.

The purpose of this section is to show that the limit measures associated with the sample paths of the branching Markov chain can also be defined by entirely measure-theoretical means not involving any topological convergence, as the transition probabilities of a certain Markov transfer operator acting between two measure spaces. This will allow us to define the limit distributions of the branching Markov chain on the measure-theoretical boundaries of the underlying chain.

Before proceeding further, let us return to \thmref{thm:emp}. It provides a measurable family of probability
measures $\dol\kb_\Mb$ on the stationary space $\Ks$ parameterised by the sample paths of the branching Markov chain $\Mb$. Considered as a Markov kernel from the path space $\Ms^\Zp$ to $\Ks$, this family gives rise to a positive norm 1 linear operator
\begin{equation} \label{eq:bc}
\Bs_\Ks: \ph \mapsto \Bs_{\raisemath{-2pt}{\Ks}} \ph \;, \qquad \text{where} \quad \ph\in C(\Ks),\quad \Bs_{\raisemath{-2pt}{\Ks}} \ph
= \langle\, \dol\kb_\Mb, \ph \rangle \;,
\end{equation}
from $C(\Ks)$ to the space $L^\infty(\Ms^\Zp)$ of bounded measurable functions on the path space of the branching chain with respect to the default measure class \eqref{fs}.

We will now go in the opposite direction by first defining an appropriate transfer operator and then using it to produce the associated family of boundary measures.

\subsection{Tail and Poisson boundaries} \label{sec:tp}

We begin with reminding the basic definitions and facts from the measurable boundary theory of Markov chains, see \textsc{Kaimanovich} \cite{Kaimanovich92} and the references therein for more details. Given a transition operator $P$ on a countable state space $\Xs$ (or, equivalently, the corresponding family of transition probabilities), this theory provides an integral representation of bounded harmonic functions, or, more generally, of bounded \sfemph{harmonic sequences} ($\equiv$ \sfemph{space-time harmonic functions})
$$
f_n = Pf_{n+1} \qquad \forall\,n\in\Zp \;.
$$
By $\Af_n^\infty$ we denote the $\si$-algebra on the path space $\Xs^\Zp$ determined by the positions of the chain at times~$\ge n$. The intersection
$$
\Af^\infty = \bigcap_n \Af_n^\infty
$$
is the \sfemph{tail $\si$-algebra} of the Markov chain $(\Xs,P)$, and it gives rise to the \sfemph{tail boundary}~$\Tc_P\Xs$ defined in the 
measure category 
by using \textsc{Rokhlin}'s correspondence between (complete) sub-$\si$-algebras of Lebesgue measure spaces and their quotient spaces; see e.g.
\textsc{Coud\`ene}~\cite[Chapter 15]{coudene16}. 
We denote the corresponding quotient map by
\begin{equation} \label{eq:tail}
\tail=\tail_P: \Xs^\Zp \to \Tc_P\Xs \;.
\end{equation}
The tail boundary is endowed with the \sfemph{harmonic measure class}, which is the $\tail$ image of the
default measure class \eqref{fs} on the path space, and the notation $L^\infty(\Tc_P\Xs)$ refers to the
harmonic measure class. Any initial position $(n,x)$ from the \sfemph{space-time} $\Zp\times X$ determines
the associated \sfemph{harmonic} probability measure $\et_{(n,x)}$ on~$\Tc_P^\Xs$ absolutely continuous with
respect to the harmonic measure class, and
\begin{equation} \label{eq:tailp}
f(n,x) = f_n(x) = \bigl\langle \et_{(n,x)}, \wh f\; \bigr\rangle
\end{equation}
is a space-time $P$-harmonic function for any $\wh f\in L^\infty(\Tc_P\Xs)$. Equivalently, $f$ is a
harmonic function of the \sfemph{space-time Markov chain} on $\Zp\times X$ (for which the spatial
transitions are accompanied by increasing the time coordinate by one). Conversely, a space-time 
function $f=(f_n)$ is $P$-harmonic if and only if the sequence of its values $f_n(X_n)$ along 
the sample paths of the Markov chain is a \emph{martingale} with respect to the increasing 
coordinate filtration of the path space. In particular, if $f$ is bounded, then the limit
\begin{equation} \label{eq:tailm}
\lim_n f_n(X_n) = \wh f (\tail\Xb)
\end{equation}
exists and is measurable with respect to the tail $\si$-algebra, i.e., it determines a function~$\wh f$
in $L^\infty(\Tc_P\Xs)$. Formulas \eqref{eq:tailp} and \eqref{eq:tailm} establish an isometric
isomorphism of $L^\infty(\Tc_P\Xs)$ and of the space of bounded space-time $P$-harmonic functions
endowed with the $\sup$ norm.

In the same way one defines (also in the measure category) the \sfemph{Poisson boundary} $\p_P X$
responsible for an integral representation of bounded $P$-harmonic functions (whence the name
alluding to the classical Poisson formula for harmonic functions on the unit disk). It is the
quotient of the path space under the \sfemph{boundary map}
$$
\bnd=\bnd_P:\Xs^\Zp\to\p_P X \;,
$$
determined by the \sfemph{exit $\si$-algebra} (the sub-algebra of the tail $\si$-algebra consisting of shift invariant sets\,---\,this is why this $\si$-algebra is also sometimes called \emph{invariant}). The following commutative diagram illustrates the relationship between the path space, the tail and the Poisson boundaries:
\begin{equation} \label{eq:cd}
\begin{tikzcd}
\Xs^\Zp \arrow[rd,"\bnd"'] \arrow[r,"\tail"] & \Tc_P\Xs \arrow[d,"\pe"] \\
& \p_P\Xs
\end{tikzcd}
\end{equation}
The Poisson boundary can be interpreted as the \emph{space of ergodic components} of the transformation $T$ of the tail boundary induced by the time shift on the path space, and the resulting projection is the map $\pe:\Tc_P\Xs\to\p_P\Xs$ in the above diagram. Formulas \eqref{eq:tailp} and \eqref{eq:tailm} restricted to the space of bounded $P$-harmonic functions (i.e., of space-time harmonic functions constant in time) establish its isometric isomorphism with the subspace of $L^\infty(\Tc_P\Xs)$ that consists of $T$-invariant functions. In terms of the Poisson boundary this isomorphism takes the form of the \sfemph{Poisson formula}
$$
f(x) = \bigl\langle \nu_x, \wh f\; \bigr\rangle \;,
$$
where $\wh f\in L^\infty(\p_P X)$, and $\nu_x$ are the \sfemph{harmonic measures} on the Poisson boundary, i.e., the images of the measures $\Pb_{\!x}$ on the path space under the boundary map $\bnd$\,---\, or, equivalently, the images of the measures $\et_x=\et_{(0,x)}$ on $\Tc_P\Xs$ under the quotient map $\pe:\Tc_P\Xs\to\p_P\Xs\,$.

We now provide a  characterisation of the topological $P$-boundaries of a Markov chain in terms of 
quotients of the Poisson boundary 
mentioned after \dfnref{dfn:pb}.

\begin{prp} \label{prp:bdry}
Let $P$ be a Markov operator on a countable state space $\Xs$, and let $(\Ks,\ka)$ be a compact separable $P$-stationary space. It is a $P$-boundary if and only if, as a measure space, it is a quotient of the Poisson boundary $\p_P\Xs$, i.e., there exists a measurable map $\qe:\p_P\Xs\to\Ks$ such that $\qe(\nu_x)=\ka_x$ for all $x\in\Xs$.
\end{prp}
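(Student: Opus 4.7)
The plan is to exploit the isometric isomorphism between bounded $P$-harmonic functions on $\Xs$ and $L^\infty(\p_P\Xs)$ furnished by the Poisson formula, applied to the test functions $f^\ph(x)=\langle\ka_x,\ph\rangle$ associated to continuous $\ph\in C(\Ks)$. Each such $f^\ph$ is bounded and, by \remref{rem:stat}, $P$-harmonic, so it has a canonical boundary value $\widehat{f^\ph}\in L^\infty(\p_P\Xs)$.

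For the ``if'' direction, assume there is a measurable $\qe:\p_P\Xs\to\Ks$ with $\qe_*\nu_x=\ka_x$ for every $x\in\Xs$. Then $f^\ph(x)=\langle\nu_x,\ph\circ\qe\rangle$, so by uniqueness of the Poisson representation the boundary value $\widehat{f^\ph}$ coincides with $\ph\circ\qe$ in $L^\infty(\p_P\Xs)$. The martingale convergence recorded in \eqref{eq:tailm} (specialised to the shift-invariant subalgebra) then gives $\lim_n\langle\ka_{X_n},\ph\rangle=\ph(\qe(\bnd\Xb))$ almost surely. Choosing a countable dense $\Phi\subset C(\Ks)$, which is available because $\Ks$ is compact separable, and discarding countably many null sets, the random weak* limit $\kb_\Xb$ supplied by \prpref{prp:c} is identified with the Dirac mass $\de_{\qe(\bnd\Xb)}$, so $(\Ks,\ka)$ is a $P$-boundary.

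For the converse, assume $\kb_\Xb=\de_{\xi_\Xb}$ almost surely for a random point $\xi_\Xb\in\Ks$. For each $\ph\in C(\Ks)$ the scalar limit $\ph(\xi_\Xb)=\lim_n\langle\ka_{X_n},\ph\rangle$ is visibly unchanged by replacing $\Xb$ with a shifted path, hence is shift-invariant. Invoking a countable dense $\Phi\subset C(\Ks)$, the $\Ks$-valued map $\Xb\mapsto\xi_\Xb$ is measurable with respect to the exit $\si$-algebra. By Rokhlin's correspondence this map factors as $\xi_\Xb=\qe(\bnd\Xb)$ for a unique (up to null sets) measurable $\qe:\p_P\Xs\to\Ks$. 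The required identity $\qe_*\nu_x=\ka_x$ is then the translation of the barycentre formula \eqref{eq:ikbxb}: the law of $\xi_\Xb$ under $\Pb_x$ is on the one hand $\qe_*(\bnd_*\Pb_x)=\qe_*\nu_x$, and on the other $\Eb_x(\de_{\xi_\Xb})=\Eb_x(\kb_\Xb)=\ka_x$.

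The main delicate point I anticipate is ensuring that $\xi_\Xb$ is exit-measurable and not merely tail-measurable, so that it actually descends through the projection $\pe:\Tc_P\Xs\to\p_P\Xs$ in \eqref{eq:cd}; separability of $C(\Ks)$ is precisely what makes this work, because it reduces weak* convergence of $\ka_{X_n}$ to convergence of countably many scalar bounded martingales, each of whose limit is patently invariant under the shift on the path space. Everything else is a routine application of the Poisson formula and martingale convergence.
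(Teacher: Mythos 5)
Your proposal is correct and follows essentially the same route as the paper: one direction lifts a continuous test function to $L^\infty(\p_P\Xs)$, forms the associated bounded harmonic function $f^\ph$, and applies martingale convergence over a countable dense subset of $C(\Ks)$ to identify $\kb_\Xb$ with $\de_{\qe\circ\bnd\Xb}$; the other direction reads the a.s.\ delta-measure limit as an exit-measurable map from the path space to $\Ks$, descends it to the Poisson boundary, and invokes the barycentre formula \eqref{eq:ikbxb} to get $\qe(\nu_x)=\ka_x$. The only difference is that you spell out the shift-invariance argument for exit-measurability, which the paper leaves implicit.
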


\begin{proof}
Let $\Ks$ be a $P$-boundary. Then for almost every sample path of the Markov chain $\Xb=(X_n)$
the weak* limit
$$
\kb_\Xb = \wlim_{t\to\infty} \ka_{X_t}
$$
is a delta measure, which provides a map from the path space to $\Ks$ which is measurable with respect
to the exit $\si$-algebra, i.e., a sought for measurable map $\qe:\p_P\Xs\to\Ks$, and by formula \eqref{eq:ikbxb} from \prpref{prp:c} $\qe(\nu_x)=\ka_x$ for all $x\in\Xs$.

Conversely, let $\Ks$, as a measure space, be a quotient of the Poisson boundary. Then any test function $\ph\in C(\Ks)$, considered as an element of $L^\infty(\Ks)$, can be lifted to a function $\wh f\in L^\infty(\p_P\Xs)$. Let $f=f^\ph$ be the associated bounded harmonic function:
$$
f(x) = \langle \nu_x, \wh f \rangle = \langle \ka_x, \ph \rangle \;, \qquad x\in\Xs \;.
$$
Then for almost every sample path of the Markov chain $\Xb=(X_n)$,
$$
\langle \ka_{X_n}, \ph \rangle = f(X_n) \to \wh f(\bnd \Xb) = \ph(\qe\circ\bnd \Xb) \;,
$$
i.e.,
$$
\kb_\Xb = \de_{\qe\circ\bnd \Xb} \;.
$$
\end{proof}

\subsection{Boundaries of branching Markov chains} \label{sec:bbmc}

Now we pass to the branching chain on $\Ms$ determined by the transition probabilities $\Pi_m$ \eqref{eq:PiM}, or, equivalently, by the transition operator $\Pc$ \eqref{eq:pe}.

We denote the tail boundary of the branching Markov chain by $\Tc_\Pc\Ms$, and the Poisson boundary by $\p_\Pc\Ms$. By $\upet_{(t,m)}$ and $\upnu_m$ we denote the harmonic measures on the respective tail and Poisson boundaries corresponding to an initial population $m\in\Ms$ (replacing, as in \S \ref{subsec:bd}, the subscript $\de_x$ with $x$ if the initial population is a singleton $\de_x$).

We recall that in what concerns \emph{random walks on countable groups}
(cf.\ \exref{ex:gr}), the difference between the tail and the Poisson boundaries is not very significant. They do coincide in the aperiodic case (\textsc{Derriennic} \cite{Derriennic76}); otherwise the fibres of the projection $\pe$ \eqref{eq:cd} of the tail boundary onto the Poisson boundary are parameterised by the periodicity classes of the random walk (\textsc{Jaworski} \cite{Jaworski95}), in particular, $\pe$ is always a bijection with respect to a one-point initial distribution (the latter fact plays a key role in the entropy theory of random walks on groups, see \textsc{Derriennic} \cite{Derriennic80} and \textsc{Kaimanovich~-- Vershik} \cite{Kaimanovich-Vershik83}). The situation is similar for \emph{random walks on graphs} under the \sfemph{uniform ellipticity} condition (the transition probabilities between any two neighbouring vertices are bounded away from 0): in this case the tail and the Poisson boundaries also coincide with respect to any one-point initial distribution, and the cardinality of the fibres of the projection $\pe$ is at most 2, see \textsc{Kaimanovich} \cite[Corollary 2 on p. 162]{Kaimanovich92}.

Branching Markov chains are manifestly space inhomogeneous, and the difference between their tail and
Poisson boundaries is much more pronounced. It can be illustrated already by the simplest example:

\begin{ex}\label{ex:GW-P}
Consider the
Galton -- Watson process (\exref{ex:GW}). Let $\rho>1$ be the mean of a non-degenerate offspring
distribution $\pi\in\Meas(\NN)$ that satisfies the $L\log L$ moment condition. Then, assuming the non-
extinction condition \eqref{ass:ne} (as always in this paper), for almost every sample path  of the Galton
-- Watson process $(Z_n)$ there exists the limit
$$
W_\infty=\lim_n Z_n/\rho^{n} > 0
$$
(cf.\ \secref{subsec:mart} and \secref{subsec:hm}) which is clearly tail measurable. It was proved by
\textsc{Lootgieter} \cite[Corollaire 2.3.II, Corollaire 3.3.II]{Lootgieter77} that the limit $W_\infty$
completely describes the tail behaviour of the Galton -- Watson process with respect to any one-point
initial distribution, or, in the aperiodic case, with respect to any initial distribution. 
(According to  \textsc{Cohn} \cite[pp.\ 420-421]{Cohn79}, this result was also independently obtained by
B.\ M.\ Brown in an unpublished 1977 manuscript ``The tail $\si$-field of a branching process''.)
Therefore, under  the $L\log L$ moment condition the tail boundary of the Galton~-- Watson process
coincides with the product  of the positive ray $\RR_+$ by the finite set of periodicity classes, or just with $\RR_+$ in the aperiodic
case. The arising limit measure on $\RR_+$ (the distribution of $W_\infty$) is actually absolutely
continuous, and its support is the whole ray~$\RR_+$, see \textsc{Athreya~-- Ney} \cite[Theorem I.10.4]
{Athreya-Ney72}. The time shift on the path space amounts to the multiplication of the limit $W_\infty$ by $
\rho$. The corresponding action of the group $\ZZ$ on~$\RR_+$ is dissipative, and therefore its space of
ergodic components ($\equiv$ the Poisson boundary of the Galton -- Watson process) can be identified with
the fundamental interval~$[1,\rho)\,$.

This identification of the Poisson boundary of Galton -- Watson processes was first obtained\,---\,in somewhat different terms\,---\,by \textsc{Dubuc} \cite[Theorem 2]{Dubuc71} under the finite second moment condition.
\end{ex}

\begin{rem}
It seems plausible that the tail and the Poisson boundaries admit a similar description for branching Markov chains over any \emph{finite} state space ($\equiv$ multi-type branching processes with a finite number of types). As far as we know, this question has not been addressed in the literature.
\end{rem}

If one passes to branching Markov chains over an infinite state space, then the problem of identification
of the tail and the Poisson boundaries appears to be horizon-less. This is indicated by the abundance
of various martingales already in the simplest case of branching random walks on $\ZZ$ (e.g., see
\textsc{Shi} \cite[Chapter~3]{Shi15}). To the best of our knowledge, this problem has not been
formulated even in the aforementioned $\ZZ$ case. We are now going
to provide links between measure-theoretic boundaries of a branching
Markov chain and that of the underlying chain.

\subsection{Harmonic martingales} \label{subsec:hm}

Given two functions $f$ and $g$ on the state space $\Xs$, their respective lifts $\wt f$ and $\wt g$ to $\Ms$ (see \secref{subsec:trans}) have the property that $\Pc\wt f=\wt g$ if and only if $\ol\pi\cdot Pf=g$,
see \prpref{prp:tr} and recall that $x \mapsto \ol\pi_x$ is the assignment of the branching ratio at
$x$. We immediately get the following.

\begin{prp} \label{prp:hlift}
The lifts $\wt f_n$ of a sequence of functions $f_n$ on $\Xs$ form a space-time $\Pc$-harmonic function
on $\Ms$, i.e.,
$$
\wt{f_n} =\Pc\,\wt f_{n+1}  \qquad\forall\,n\in\Zp
$$
if and only if on $\Xs$
\begin{equation} \label{eq:fn}
f_n = \ol\pi\cdot P f_{n+1} \qquad\forall\,n\in\Zp \;.
\end{equation}
\end{prp}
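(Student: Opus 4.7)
The plan is to deduce this directly from the commutation relation \eqref{eq:comm} in Proposition~\ref{prp:tr}, which gives $\Pc L = L \pim P$, together with the trivial observation that the lifting operator $L$ is injective.

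First I would apply $L$ to the right-hand side of \eqref{eq:fn}: by linearity of $L$ and by Proposition~\ref{prp:tr},
$$
\Pc\,\wt f_{n+1} = \Pc L f_{n+1} = L\bigl(\ol\pi\cdot Pf_{n+1}\bigr) = \wt{\ol\pi\cdot Pf_{n+1}}\,.
$$
Thus the equation $\Pc\wt f_{n+1}=\wt f_n$ on $\Ms$ is equivalent to $\wt{\ol\pi\cdot Pf_{n+1}} = \wt f_n$, i.e. to the assertion that $L$ sends the two functions $\ol\pi\cdot Pf_{n+1}$ and $f_n$ on $\Xs$ to the same lift.

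The ``if'' direction is then immediate. For ``only if'', I need to know that $L$ is injective on $\Fun(\Xs)$; but this is trivial because evaluating any lift at the singleton population recovers the original function: $\wt f(\de_x) = \langle \de_x, f\rangle = f(x)$ for every $x\in\Xs$. Hence $\wt g_1 = \wt g_2$ forces $g_1 = g_2$ pointwise, and so $f_n = \ol\pi\cdot Pf_{n+1}$ on $\Xs$.

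There is no serious obstacle here; the proposition is essentially a formal consequence of the intertwining identity already established in Proposition~\ref{prp:tr}. The only thing worth spelling out in the write-up is that one applies the identity separately for each $n\in\Zp$, so that the pointwise-in-time condition \eqref{eq:fn} is equivalent to the pointwise-in-time space–time harmonicity of the lifted sequence.
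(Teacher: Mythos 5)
Your argument is correct and is essentially the paper's own: the paper derives \prpref{prp:hlift} directly from the intertwining relation $\Pc L = L\pim P$ of \prpref{prp:tr}, applied for each $n$, exactly as you do. Your explicit remark that injectivity of $L$ (via evaluation at singletons $\de_x$) is what makes the ``only if'' direction work is a point the paper leaves implicit, but it is the same proof.
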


\begin{cor}\label{cor:st}
If condition \eqref{ass:br} is satisfied, then for any space-time $P$-harmonic function $(f_n)$ on
the state space $\Xs$ the sequence
$$
\left( \frac{1}{\rho^n}\, \wt f_n \right)
$$
is a space-time $\Pc$-harmonic function on the population space $\Ms$.
\end{cor}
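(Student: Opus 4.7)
The plan is to deduce this directly from \prpref{prp:hlift} by applying it to the renormalised sequence $g_n = f_n/\rho^n$. First I would observe that the lift is linear in the function (since $\wt{c\cdot f}(m) = \sum_x m(x)\,c\,f(x) = c\,\wt f(m)$), so
$$
\wt{g_n} = \frac{1}{\rho^n}\,\wt{f_n}\,.
$$
Hence the claim that $\left(\frac{1}{\rho^n}\,\wt{f_n}\right)$ is space-time $\Pc$-harmonic on $\Ms$ is equivalent to $(\wt{g_n})$ being space-time $\Pc$-harmonic.

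Next I would invoke \prpref{prp:hlift}, which reduces this equivalence to checking the scalar identity $g_n = \ol\pi\cdot P g_{n+1}$ on $\Xs$. Under assumption \eqref{ass:br} the branching ratio function is the constant $\rho$, so this identity becomes $g_n = \rho\cdot P g_{n+1}$. Substituting $g_n = f_n/\rho^n$ turns the right-hand side into
$$
\rho\cdot P\!\left(\frac{f_{n+1}}{\rho^{n+1}}\right) = \frac{1}{\rho^n}\,P f_{n+1}\,,
$$
which equals $g_n = f_n/\rho^n$ precisely because $(f_n)$ was assumed to be space-time $P$-harmonic, i.e. $f_n = P f_{n+1}$ for every $n \in \Zp$.

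There is essentially no obstacle: the statement is a one-line algebraic consequence of \prpref{prp:hlift} combined with the constancy of $\ol\pi$ and the linearity of the lifting operator $L$ of \eqref{eq:L}. The only point worth flagging is that the renormalisation $1/\rho^n$ is exactly what compensates for the eigenvalue $\rho$ produced when the lift intertwines $P$ with $\Pc$ (\corref{cor:har}), so the whole content of the corollary is that the Kesten--Stigum-type rescaling converts ordinary $P$-harmonic sequences on $\Xs$ into $\Pc$-harmonic sequences on $\Ms$ without any further hypothesis.
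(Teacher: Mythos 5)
Your proof is correct and is exactly the argument the paper intends: the corollary is stated without proof as an immediate consequence of \prpref{prp:hlift}, and your application of that proposition to $g_n = f_n/\rho^n$ together with the constancy $\ol\pi \equiv \rho$ from \eqref{ass:br} and the linearity of the lift is precisely the intended one-line deduction.
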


\begin{dfn} \label{dfn:hm}
The \sfemph{harmonic martingale} determined by a space-time $P$-harmonic function $f=(f_n)$ on $\Xs$
is the sequence of random variables
\begin{equation} \label{eq:hm}
W^f_n (\Mb) = \frac{\wt f_n(M_n)}{\rho^n} = \frac{\left\langle M_n, f_n \right\rangle}{\rho^n}
\end{equation}
on the path space of the branching Markov chain $\Mb=(M_n)$.
\end{dfn}

We denote the pointwise (almost everywhere) limit of the harmonic martingale \eqref{eq:hm} by
$$
W^f_\infty(\Mb) = \lim_n W^f_n(\Mb) \;.
$$
The random variable $W^f_\infty$ is tail measurable, and therefore it can be presented as the composition
\begin{equation} \label{eq:wft}
W^f_\infty = \we^f \circ \tail_\Pc
\end{equation}
of the quotient map $\tail_\Pc: \Ms^\Zp \to \Tc_\Pc\Xs$ \eqref{eq:tail} with the arising measurable
function $\we^f$ on the tail boundary $\Tc_{\Pc}\Ms$ of the branching Markov chain.

In the particular case when $f=\1$ the associated harmonic martingale $(W_n^\1)$ is precisely the
\emph{population martingale} $(W_n)$ introduced in \dfnref{dfn:pop} and studied in \secref{sec:pm}. We denote by $\we=\we^\1$ the function on the tail boundary determined by the pointwise limit $W_\infty=W_\infty^\1$ of the population martingale (the \emph{limit population ratio}, see \dfnref{dfn:pop}).

If the limit population ratio is positive (by \thmref{thm:pos} this is almost surely the case under the uniform $L\log L$ moment condition), then
\begin{equation} \label{eq:ratio}
\begin{aligned}
\frac{\we^f}{\we}(\tail\Mb)
&= \frac{W^f_\infty}{W_\infty}(\Mb)
= \lim_n \frac{W^f_n}{W_n}(\Mb) \\
&= \lim_n \frac{\left\langle M_n\,, f_n \right\rangle}{\|M_n\|}
= \lim_n \Bigl\langle \dol M_n\,, f_n \Bigr\rangle
\end{aligned}
\end{equation}
is nothing but the \emph{limit empirical average} of the functions $f_n$ along 
the branching Markov chain.

\begin{rem} \label{rem:hm}
In the context of branching Markov chains the term ``harmonic martingale'' was used by
\textsc{Biggins -- Cohn -- Nerman} \cite{Biggins-Cohn-Nerman99} for the sequence (in our notation)
$\wt\ph_n(M_n)$, where $(\ph_n)$ is a sequence of functions on $\Xs$ such that its lift to the space of populations $(\wt\ph_n)$ is a space-time $\Pc$-harmonic function. Actually, in the setup of
\cite{Biggins-Cohn-Nerman99} the state space $\Xs$ is endowed with a \sfemph{space-time partition}
into pairwise disjoint \sfemph{levels} \mbox{$\Xs_n,\;n\ge 0$} such that $\Xs_0$ consists of a single
state~$x_0$, the branching chain starts at time 0 from a single particle sitting at $x_0$, and at each
step the population moves to the next level, so that the time $n$ random population $M_n$ is concentrated
on $\Xs_n$. Therefore, the sequence~$(\ph_n)$ can be considered as a \emph{single} function $\ph$ on the
state space $\Xs$ with the property that its lift $\wt\ph$ is a $\Pc$-harmonic function on the population
space $\Ms$. Such functions on $\Xs$ are called \emph{mean-harmonic} by \textsc{Biggins -- Kyprianou}
\cite{Biggins-Kyprianou04}. Our setup is slightly more general as we deal with the space-time harmonic
functions which do not necessarily come from a space-time partition of the state space. We feel that it is
more consistent to deal with the space-time harmonic functions (instead of the time constant ones) from the
very beginning. The reason is that martingales, by their very nature, are linked with the \emph{tail}
$\si$-algebra (rather than with the exit one), cf.\ \secref{sec:tp} and \thmref{thm:bdry}.
\end{rem}

\subsection{Boundary transfer operator} \label{subsec:transfer}

Below we are going to use the standard facts from the measurable theory of Markov operators,
e.g., see \textsc{Foguel} \cite{Foguel80}. We recall that, given two measure spaces $(X,\mu_X)$ and $(Y,\mu_Y)$, a linear operator
$$
B:L^\infty(Y,\mu_Y)\to L^\infty(X,\mu_X)
$$
is called \sfemph{Markov} if it preserves constants, is positive and order continuous, i.e.,
\begin{enumerate}[{\rm (i)}]
\item
$B \1_Y = \1_X$\;,
\item
$B\ph\ge 0$ for any $\ph\ge 0$ \;,
\item
$B \ph_k \da 0$ for any sequence $\ph_k\da 0$ \;.
\end{enumerate}

As we have explained in \secref{sec:tp}, formulas \eqref{eq:tailp} and \eqref{eq:tailm} establish an isometric isomorphism $\wh f \mapsto f=(f_n)$ of $L^\infty(\Tc_P\Xs)$ and of the Banach space of bounded space-time $P$-harmonic functions on the state space $\Xs$ endowed with the $\sup$ norm.

\begin{thm} \label{thm:bdry}
If the offspring distributions of a branching Markov chain with property \eqref{ass:ne}
satisfy the uniform $L\log L$ moment condition, then the map
\begin{equation} \label{eq:b}
B: \wh f \mapsto \frac{\we^f}{\we} \;, \qquad L^\infty(\Tc_P\Xs) \to L^\infty(\Tc_\Pc\Ms) \;,
\end{equation}
is a Markov operator. Here $\we^f/\we$ is the function \eqref{eq:ratio} on the tail boundary $\Tc_\Pc\Ms$ that represents the limit empirical averages of the space-time
harmonic function $f=(f_n)$ determined by $\wh f$.
\end{thm}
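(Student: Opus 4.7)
The plan is to first check that the formula \eqref{eq:b} unambiguously defines a bounded linear map between the two $L^\infty$ spaces, and then to verify in turn the three axioms (i)--(iii) of a Markov operator. The only really delicate point will be order continuity.

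For well-definedness, note first that by \thmref{thm:pos} the limit population ratio $\we$ is strictly positive on a conegligible subset of $\Tc_\Pc\Ms$, so the pointwise quotient $\we^f/\we$ is defined almost everywhere with respect to \eqref{fs}; its tail-measurability is immediate from \eqref{eq:wft}. To see that the quotient depends only on the harmonic equivalence class of $\wh f$, invoke the Poisson-type formula \eqref{eq:tailp}: one has $f_n(x)=\langle\et_{(n,x)},\wh f\,\rangle$, and each measure $\et_{(n,x)}$ is absolutely continuous with respect to the harmonic measure class on $\Tc_P\Xs$. Hence $\wh f=\wh f\,'$ in $L^\infty(\Tc_P\Xs)$ yields pointwise equal $(f_n)$ on $\Xs$, pointwise equal lifts $(\wt f_n)$ on $\Ms$, identical harmonic martingales $W^f_n$ and identical almost sure limits $W^f_\infty$. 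Linearity of $B$ is inherited from the linear chain $\wh f\mapsto(f_n)\mapsto(\wt f_n)\mapsto(W^f_n)\mapsto W^f_\infty$.

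For the boundedness and axioms (i) and (ii), the key estimate is that $|f_n(x)|\le\|\wh f\,\|_\infty$ for all $(n,x)$, hence $|W^f_n|=|\langle M_n,f_n\rangle|/\rho^n\le\|\wh f\,\|_\infty\,W_n\,$, and in the limit $|\we^f|\le\|\wh f\,\|_\infty\,\we$ almost surely. Dividing by $\we>0$ gives $|B\wh f\,|\le\|\wh f\,\|_\infty$, so $B\wh f\in L^\infty(\Tc_\Pc\Ms)$ with operator norm at most $1$. Taking $\wh f=\1$: the associated space-time harmonic function is the constant $\1$ on $\Xs$, its lift is $\wt\1(m)=\|m\|$, and hence $W^{\1}_n=W_n$ and $\we^{\1}=\we$, so $B\1=\1$, which is axiom (i). If $\wh f\ge 0$, then $f_n\ge 0$ on $\Xs$, so $W^f_n\ge 0$ on the path space, so $\we^f\ge 0$ and $B\wh f\ge 0$, which is axiom (ii).

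The main obstacle is axiom (iii), the order continuity. Suppose $\wh f_k\downarrow 0$ in $L^\infty(\Tc_P\Xs)$. Writing $f^{(k)}_n$ for the corresponding space-time harmonic functions, absolute continuity of $\et_{(n,x)}$ together with monotone (or dominated) convergence gives $f^{(k)}_n(x)\downarrow 0$ for every $(n,x)\in\Zp\times\Xs$. Because $M_n$ is finitely supported, the harmonic martingales $W^{f^{(k)}}_n=\langle M_n,f^{(k)}_n\rangle/\rho^n$ are decreasing in $k$ on every sample path, and so are their a.s. limits $W^{f^{(k)}}_\infty$. Let $\Phi=\lim_k W^{f^{(k)}}_\infty\ge 0$; the goal is $\Phi=0$ a.e. The crucial input is \thmref{thm:ll}: since $|W^{f^{(k)}}_n|\le\|\wh f_k\|_\infty\,W_n$ and the population martingale is uniformly integrable, each martingale $\bigl(W^{f^{(k)}}_n\bigr)_n$ is also uniformly integrable, giving
$\pmb\Ee_x W^{f^{(k)}}_\infty=\pmb\Ee_x W^{f^{(k)}}_0=f^{(k)}_0(x)=\langle\et_x,\wh f_k\rangle$
for every $x\in\Xs$. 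As $k\to\infty$ the right-hand side tends to $0$ by dominated convergence on $\Tc_P\Xs$, and from $0\le\Phi\le W^{f^{(k)}}_\infty$ one obtains $\pmb\Ee_x\Phi=0$ for every $x$. This forces $\Phi=0$ a.e. with respect to \eqref{fs}, and dividing by $\we>0$ yields $B\wh f_k\downarrow 0$, completing axiom (iii).

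The essential difficulty is precisely this last exchange of limits: in $L^\infty(\Tc_P\Xs)$ the convergence $\wh f_k\downarrow 0$ is only almost everywhere, not uniform, so domination of $W^{f^{(k)}}_\infty$ by $\|\wh f_k\|_\infty\cdot W_\infty$ does not by itself kill the limit. The uniform $L\log L$ hypothesis is used twice\,---\,once through \thmref{thm:pos} to legitimize the division by $\we$, and once through \thmref{thm:ll} to pass between $W^{f^{(k)}}_0$ and $W^{f^{(k)}}_\infty$ at the level of expectations, which is what converts ``$\wh f_k\to 0$ against $\et_x$'' into ``$W^{f^{(k)}}_\infty\to 0$ in $L^1$''.
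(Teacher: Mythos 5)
Your proof is correct and follows essentially the same route as the paper's: both rest on the observation that Theorem~\ref{thm:ll} makes every harmonic martingale $\bigl(W^f_n\bigr)$ with bounded $f$ uniformly integrable, so that $\pmb\Ee_m W^f_\infty=\langle m,f_0\rangle=\langle \et_m,\wh f\,\rangle$, after which monotone convergence and the strict positivity of $\we$ (Theorem~\ref{thm:pos}) yield order continuity --- the paper merely packages this identity dually as $\langle\et_m,\wh f\,\rangle=\langle\we\!\cdot\!\upet_m,B\wh f\,\rangle$ on the tail boundaries rather than on the path space. The one point to tighten is that the default measure class \eqref{fs} is generated by the measures $\pmb\Pe_m$ for \emph{all} initial populations $m\in\Ms$ (and $\pmb\Pe_m$ is not a mixture of the $\pmb\Pe_x$), so in the final step you should run the expectation identity with $\pmb\Ee_m$, giving $\pmb\Ee_m W^{f^{(k)}}_\infty=\langle m,f^{(k)}_0\rangle=\langle\et_m,\wh f_k\rangle\to 0$ and hence $\Phi=0$ almost surely under every $\pmb\Pe_m$, exactly as the paper does.
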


\begin{proof}
Property (i) from the definition of a Markov operator follows from \thmref{thm:pos}, whereas (ii) is obvious. We just have to verify property (iii), i.e., the order continuity of~$B$. It is here that we use the uniform integrability of the population martingale which implies that the operator $B$ preserves the integrals with respect to appropriately chosen measures on the tail boundaries $\Tc_P\Xs$ and $\Tc_\Pc\Ms$.

The first observation is that the uniform integrability of the population martingale (\thmref{thm:ll}) implies the uniform integrability of the harmonic martingale $\bigl(W^f_n\bigr)$ for any \emph{bounded} space-time harmonic function $f=(f_n)$ on $\Xs$. Thus, for any initial population $m\in\Ms$
$$
\langle \, \pmb\Pe_m, W^f_0 \rangle = \langle \, \pmb\Pe_m, W^f_\infty \rangle \;,
$$
or, in view of \eqref{eq:hm} and \eqref{eq:wft},
$$
\langle m, f_0 \rangle = \langle \upet_m, \we^f \rangle \;,
$$
where $\upet_m=\tail(\pmb\Pe_m)$ is the harmonic measure on the tail boundary $\Tc_\Pc\Ms$ corresponding to the initial distribution $\de_m$. In terms of the boundary function $\wh f\in L^\infty(\Tc_P\Xs)$ representing~$f$ we then have
\begin{equation} \label{eq:epep}
\bigl\langle \et_m, \wh f \,\bigr\rangle = \bigl\langle \we\!\cdot\!\upet_m, B\wh f\, \bigr\rangle \;,
\end{equation}
where
$$
\et_m = \sum_{x\in m} \et_x \;,
$$
and $\we\!\cdot\!\upet_m$ is the measure on $\Tc_\Pc\Ms$ with the density $\we$ with respect to the harmonic measure~$\upet_m\,$, so that
$$
\|\et_m \| = \| \we\!\cdot\!\upet_x \| = \|m\| \;.
$$

In view of the monotone convergence theorem, identity \eqref{eq:epep} then implies that if
${\wh f}^{\,(k)}\in L^\infty(\Tc_P\Xs)$ with ${\wh f}^{\,(k)}\da 0$ almost everywhere, then
$B{\wh f}^{\,(k)}\da 0$ almost everywhere with respect to all measures $\we\!\cdot\!\upet_m\,,\;m\in\Ms$, which by \thmref{thm:pos} is the same as the almost everywhere convergence with respect to the harmonic measure class on $\Tc_\Pc\Ms$.
\end{proof}

If a measurable space $\Ks$ is a quotient of the tail boundary $\Tc_P\Xs$, then the precomposition of the transfer operator $B$ \eqref{eq:b} constructed in \thmref{thm:bdry} with the lift
$$
L^\infty(\Ks)\to L^\infty(\Tc_P\Xs)
$$
provides a Markov operator
\begin{equation} \label{eq:bks}
B_\Ks:L^\infty(\Ks)\to L^\infty(\Tc_\Pc\Ms) \;,
\end{equation}
which we are going to compare with the operator $\Bs_\Ks$ \eqref{eq:bc}. Since the map $\Mb\mapsto\kb_\Mb$ is tail measurable by the definition of the measures $\kb_\Mb$ in \thmref{thm:kconv}, the operator~$\Bs_\Ks$ produces tail measurable functions on the path space, and therefore its range can be identified with the space $L^\infty(\Tc_\Pc\Ms)$.

\begin{thm} \label{thm:bb}
Under the conditions of \thmref{thm:bdry}, if $\Ks$ be a compact separable $P$-boundary of the underlying Markov chain $(\Xs,P)$, then the restriction of the operator $B_\Ks$~\eqref{eq:bks} to the space $C(\Ks)$ coincides with the operator $\Bs_\Ks$ \eqref{eq:bc}.
\end{thm}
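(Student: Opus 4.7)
The plan is to unpack both operators on a fixed test function $\ph \in C(\Ks)$ and verify that they yield the same almost-everywhere defined function on the path space of $\Mb$, by threading $\ph$ through the commutative diagram of boundaries.

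First, I would invoke \prpref{prp:bdry} to realise $\Ks$ as a measurable quotient of the Poisson boundary $\p_P\Xs$ via a map $\qe:\p_P\Xs\to\Ks$ with $\qe(\nu_x)=\ka_x$. Composing with the projection $\pe:\Tc_P\Xs\to\p_P\Xs$ in diagram~\eqref{eq:cd}, the continuous function $\ph$ lifts to $\wh\ph = \ph\circ\qe\circ\pe \in L^\infty(\Tc_P\Xs)$. This lifted function belongs to the subspace of $T$-invariant elements of $L^\infty(\Tc_P\Xs)$ corresponding, under the isomorphism \eqref{eq:tailp}--\eqref{eq:tailm}, to bounded $P$-harmonic functions that are \emph{constant in time} rather than merely space-time harmonic.

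Second, I would identify the space-time harmonic function $f=(f_n)$ associated with $\wh\ph$. Because the lift factors through $\pe$, the Poisson formula yields
$$
f_n(x) = \bigl\langle \et_{(n,x)}, \wh\ph\, \bigr\rangle = \bigl\langle \nu_x, \ph\circ\qe\bigr\rangle = \langle \ka_x, \ph \rangle \qquad \forall\, n\in\Zp,\; x\in\Xs,
$$
so $f_n \equiv f$ is time-independent. By \thmref{thm:bdry} the operator $B$ sends $\wh\ph$ to $\we^f/\we$ on $\Tc_\Pc\Ms$, and via formula~\eqref{eq:ratio} this represents the $\pmb\Pe$-almost sure limit
$$
B_\Ks\ph(\Mb) = \lim_n \frac{\langle M_n, f\rangle}{\|M_n\|} = \lim_n \Bigl\langle \dol\ka_{\! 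M_n}, \ph \Bigr\rangle,
$$
where $\dol\ka_{\! M_n} = \frac{1}{\|M_n\|}\ka_{M_n}$ is the normalised average measure from \thmref{thm:emp}.

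Third, I would apply \thmref{thm:emp} to the compact separable stationary space $(\Ks,\ka)$, which is covered by its hypotheses since a $P$-boundary is in particular a stationary space and the uniform $L\log L$ condition is in force. The conclusion gives that $\dol\ka_{\! M_n} \to \dol\kb_\Mb$ weak* in $\Prob(\Ks)$ almost surely, so
$$
\lim_n \Bigl\langle \dol\ka_{\! M_n}, \ph \Bigr\rangle = \Bigl\langle \dol\kb_\Mb, \ph \Bigr\rangle = \Bs_\Ks\ph(\Mb)
$$
almost surely, which is the required equality modulo the default measure class \eqref{fs} on the path space and hence in $L^\infty(\Tc_\Pc\Ms)$.

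The main obstacle is really conceptual rather than computational: one must track the fact that, for a $P$-boundary (as opposed to a stationary space realised only as a quotient of the tail boundary), the lift of $\ph$ lives in the shift-invariant part of $L^\infty(\Tc_P\Xs)$, so the corresponding harmonic martingale collapses from a genuine space-time object to $\frac{1}{\rho^n}\langle M_n, f\rangle$ for a single bounded harmonic function $f$. Once this time-independence is pinned down via the factorisation $\wh\ph = \ph\circ\qe\circ\pe$, the remainder is an immediate application of \thmref{thm:bdry} and \thmref{thm:emp}.
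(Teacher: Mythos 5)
Your proposal is correct and follows essentially the same route as the paper: compute $B_\Ks\ph$ via formula \eqref{eq:ratio} applied to the harmonic function $f(x)=\langle\ka_x,\ph\rangle$, compute $\Bs_\Ks\ph$ via \thmref{thm:emp}, and observe that $\langle \dol M_n, f\rangle = \langle \dol\ka_{M_n},\ph\rangle$. Your additional care in factoring the lift $\wh\ph=\ph\circ\qe\circ\pe$ through the Poisson boundary and verifying that the resulting space-time harmonic function is time-independent merely makes explicit what the paper leaves implicit in the definition of $B_\Ks$.
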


\begin{proof}
Take a test function $\ph\in C(\Ks)$. Then by the definition of the operator $\Bs_\Ks$ and by \thmref{thm:emp} for almost every sample path $\Mb=(M_t)$
$$
\Bs_{\raisemath{-2pt}{\Ks}} \ph (\tail\Mb) = \langle \dol\kb_\Mb, \ph \rangle = \lim_t \langle \dol\ka_{M_t}, \ph \rangle \;,
$$
whereas by \eqref{eq:ratio}
$$
B_{\raisemath{-2pt}{\Ks}} \ph (\tail\Mb) = \lim_t \Bigl\langle \dol M_t, f \Bigr\rangle \;,
$$
where $f(x)=\langle \ka_x,\ph \rangle$, and therefore $\Bigl\langle \dol M_t, f \Bigr\rangle=\langle \dol\ka_{M_t}, \ph \rangle$.
\end{proof}

\subsection{Boundary measures}

\begin{thm}\label{thm:bmeas}
If the offspring distributions of a branching Markov chain satisfy the uniform $L\log L$ moment condition,
then the tail boundary of the underlying chain $\Tc_P\Xs$ is endowed with a family of probability
measures $\et^{\,\xi}$ indexed by the points $\xi\in\Tc_\Pc$ from the tail boundary of the branching
chain with the following properties:
\begin{itemize}
\item[{\rm (i)}]
The family $\{\et^{\,\xi}\}$ is measurable in the sense that for any function $\ph\in L^\infty(\Tc_P\Xs)$ the integrals $\langle \et^{\,\xi}, \ph \rangle$ depend on $\xi$ measurably.
\item[{\rm (ii)}]
If $\Ks$ is a compact separable $P$-boundary, then for almost every sample path of the branching chain
$\Mb=(M_n)$, the limit measure $\dol\kb_\Mb$ on $\Ks$ from \thmref{thm:emp} is the image
$\qe(\et^{\,\xi})$ of the measure $\et^{\,\xi},\;\xi=\tail\Mb,$ under the quotient map $\qe:\Tc_P\Xs\to\Ks$.
\item[{\rm (iii)}]
In particular, if $\p\Xs$ is the boundary of a Dirichlet regular compactification of the state space
$\Xs$, then the empirical distributions $\dol M_n$ almost surely weak* converge to the image of the
measure $\et^{\,\xi},\;\xi=\tail\Mb,$ under the quotient map from the tail boundary $\Tc_P\Xs$ to $\p\Xs$.
\end{itemize}
\end{thm}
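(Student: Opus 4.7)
The plan is to construct the measurable family $\{\et^{\,\xi}\}_{\xi\in\Tc_\Pc\Ms}$ as the disintegration kernel of the Markov transfer operator $B$ from \thmref{thm:bdry}, and then to identify its pushforward to a compact $P$-boundary $\Ks$ with the limit measure $\dol\kb_\Mb$ furnished by \thmref{thm:emp}, using \thmref{thm:bb} as the bridge between the two constructions.

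More precisely, for part (i) I would invoke the standard duality between Markov operators and transition kernels between Lebesgue (standard Borel) measure spaces. Since $B: L^\infty(\Tc_P\Xs) \to L^\infty(\Tc_\Pc\Ms)$ is a positive, unit-preserving, order-continuous linear operator (the three properties verified in the proof of \thmref{thm:bdry}), and since both boundaries are Lebesgue spaces modulo their harmonic measure classes, there exists an essentially unique measurable assignment $\xi \mapsto \et^{\,\xi}\in\Prob(\Tc_P\Xs)$ such that
$$
B\wh f(\xi) = \bigl\langle \et^{\,\xi}, \wh f\,\bigr\rangle \qquad \forall\,\wh f\in L^\infty(\Tc_P\Xs)
$$
for almost every $\xi \in \Tc_\Pc\Ms$. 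Measurability of $\xi\mapsto \langle \et^{\,\xi},\ph\rangle$ for each fixed $\ph$ is then exactly condition (i), and is automatic from this representation.

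For part (ii), given a compact separable $P$-boundary $\Ks$ with quotient map $\qe:\p_P\Xs\to\Ks$ coming from \prpref{prp:bdry} (which factors through $\Tc_P\Xs$ via the canonical map $\pe$ of diagram \eqref{eq:cd}), I would take an arbitrary test function $\ph\in C(\Ks)$ and compare the two Markov operators $B_\Ks$ and $\Bs_\Ks$ from \eqref{eq:bks} and \eqref{eq:bc}. On one hand, by the disintegration above,
$$
B_\Ks\ph(\xi) = \bigl\langle \et^{\,\xi}, \ph\circ\qe\bigr\rangle = \bigl\langle \qe(\et^{\,\xi}),\ph\bigr\rangle,
$$
and on the other hand, by the definition of $\Bs_\Ks$, $\Bs_\Ks\ph(\tail\Mb) = \langle \dol\kb_\Mb,\ph\rangle$. \thmref{thm:bb} asserts that $B_\Ks = \Bs_\Ks$ on $C(\Ks)$, so for every $\ph$ in a countable dense subset of $C(\Ks)$ the equality $\langle \qe(\et^{\tail\Mb}),\ph\rangle = \langle \dol\kb_\Mb,\ph\rangle$ holds for almost every $\Mb$; discarding a single null set and appealing to density then yields $\dol\kb_\Mb = \qe(\et^{\,\xi})$ with $\xi=\tail\Mb$ as Borel probability measures on $\Ks$.

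For part (iii), the Dirichlet regular compactification supplies its boundary $\p\Xs$ as a compact separable $P$-stationary space, which is a $P$-boundary by the stochastic resolutivity half of \prpref{prp:dir} together with \dfnref{dfn:pb}. The last assertion of \thmref{thm:emp} then provides the almost sure weak* convergence $\dol M_n \to \dol\kb_\Mb$, and combining with part (ii) (applied to $\Ks=\p\Xs$) identifies this limit with the image of $\et^{\,\xi}$ under the quotient map $\Tc_P\Xs\to\p\Xs$. The main technical obstacle is really concentrated in part (i): ensuring that one has a genuine pointwise (almost everywhere defined) transition kernel rather than just an operator identity requires the Lebesgue space framework of \secref{sec:tp}; once that is in place, parts (ii) and (iii) are essentially bookkeeping on top of \thmref{thm:bb} and \thmref{thm:emp}.
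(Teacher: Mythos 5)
Your proposal is correct and follows essentially the same route as the paper: the measures $\et^{\,\xi}$ are obtained as the transition kernel (disintegration) of the Markov operator $B$ from \thmref{thm:bdry} using the Lebesgue-space structure of the two tail boundaries, part (ii) is the identification of the kernels of $B_\Ks$ and $\Bs_\Ks$ via \thmref{thm:bb}, and part (iii) is read off from \thmref{thm:emp}. The only cosmetic difference is that the paper realises the kernel concretely as the system of conditional measures of the joint distribution $\la^B$ on $\Tc_\Pc\Ms\times\Tc_P\Xs$, whereas you cite the abstract duality between Markov operators and kernels — these are the same construction.
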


\begin{proof}
We will construct the measures $\et^{\,\xi}$ as the transition probabilities of the Markov operator $B$ from \thmref{thm:bdry}.

We denote by $L^1(\Tc_P\Xs)$ the Banach space of finite measures absolutely continuous with respect to the
harmonic measure class on the tail boundary $\Tc_P\Xs$ of the underlying chain and endowed with the total
variation norm. We emphasise that this space\,---\,in the same way as its dual $L^\infty(\Tc_P\Xs)$\,---\,is
defined ``coordinate free'', just in terms of the harmonic \emph{measure class}. If one takes a reference
measure $\la$ from this class, then the elements of $L^1(\Tc_P\Xs)$ can be identified with their densities
with respect to $\la$, after which $L^1(\Tc_P\Xs)$ becomes the ``usual'' space $L^1(\Tc_P\Xs,\la)$.
Likewise, we denote by $L^1(\Tc_P\Ms)$ the analogous space associated with the tail boundary of the
branching Markov chain.

Being Markov, the operator
$$
B: L^\infty(\Tc_P\Xs) \to L^\infty(\Tc_\Pc\Ms)
$$
from \thmref{thm:bdry} is dual to an operator
$$
\la\mapsto \la B \;, \qquad L^1(\Tc_\Pc\Ms) \to L^1(\Tc_P\Xs) \;,
$$
and, as we saw in the course of the proof of \thmref{thm:bdry}, formula \eqref{eq:epep},
$$
\et_m = (\we\!\cdot\!\upet_m) B \qquad\forall\,m\in\Ms \;.
$$

For any initial probability measure $\la\in L^1(\Tc_\Pc\Ms)$ the operator $B$ gives rise to the associated
joint distribution $\la^B$ on the product $\Tc_\Pc\Ms\times\Tc_\Pc\Xs$ whose marginal distributions are the
measures $\la$ and $\la B$. Since all involved measure spaces are Lebesgue spaces, the conditional measures
$\et^{\,\xi},\; \xi\in \Tc_\Pc\Ms,$ on the fibres $\{\xi\}\times \Tc_\Pc\Xs\cong \Tc_\Pc\Xs$ of the
projection
$$
\Tc_\Pc\Ms\times\Tc_\Pc\Xs \to \Tc_\Pc\Ms
$$
are well-defined and their dependence on $\xi$ is measurable. Their system does not depend (mod 0) on the choice of $\la$ and provides the transition probabilities that determine the operator $B$, so that
$$
\la B = \int \et^{\,\xi}\,d\la(\xi)
$$
for any $\la\in L^1(\Tc_\Pc\Ms)$.

Claim (ii) then follows from \thmref{thm:bb}. Indeed, the operator $B_\Ks$ \eqref{eq:bks} being the result of the precomposition of the operator $B$ \eqref{eq:b} with the lift $L^\infty(\Ks)\to L^\infty(\Tc_P\Xs)$ determined by the quotient map $\qe:\Tc_P\Xs\to\Ks$, its transition probabilities are the $\qe$-images of the transition probabilities $\et^{\,\xi}$ of the operator $B$. On the other hand, by \thmref{thm:bb} the operator $B_\Ks$ has the same transition probabilities as the operator $\Bs_\Ks$ \eqref{eq:bc}, whereas the latter ones are, by definition, the measures $\dol\kb_\Mb\,$.

Finally, claim (iii) now follows from \thmref{thm:emp}.
\end{proof}

\begin{rem}
Although the measure $\la B$ on $\Tc_P\Xs$ is absolutely continuous with respect to the harmonic measure class, the measures $\et^{\,\xi}$ need \emph{not} be absolutely continuous. An extreme example is provided by the situation when there is no branching ($\rho=1$), and the branching Markov chain is reduced to an ordinary one (\exref{ex:gw}).
\end{rem}

\bibliographystyle{amsalpha}
\bibliography{brw}

\end{document}

\section{Final remarks} \label{sec:final}

\begin{rem}\label{rem:recu}
...
\end{rem}

historical process

make a comment about the boundary measures for tree-indexed Markov chains

what if the number of offspring is not constant, in particular Gantert -- Muller case;

remark $\la$-harmonic functions always give rise to a martingale, but its limit is usually trivial

Questions about compactifications: the relation between stochastic resolutivity and the usual resolutivity. When is the Furstenberg compactification Dirichlet regular?

Kesten -- Stigum theorem for the population martingale of a general branching random walk on a group

standing assumption on the transition operator: transient with pairwise communicating states

When talking about integration we use the ``pairing notation'' $\langle \mu, f \rangle$ to denote the integral of a function $f$ with respect to a measure $\mu$.

Various standing assumptions and conventions marked with \textcolor{gray}{\vrule width 1mm} are assumed to be satisfied throughout the paper unless otherwise specified. They are usually explicitly evoked in the formulations of the principal theorems, but sometimes just implied by default in less important results.

Question: boundary convergence without Dirichlet regularity

$\la$-harmonic functions give rise to uniformly integrable martingales - rare for ``ordinary'' Markov chains

Measures are more natural fractals than sets, cf Barnsley's fern; the space of measures is the space where fractals \emph{really} live \textsc{Barnsley} \cite[p.\ 349]{Barnsley93} \cite{Edgar98}

Louidor-Perkins 2015: Weak convergence of empirical distributions to the normal law: conjectured by Harris [15], first proved by Stan [26], and then proved under optimal conditions by Kaplan [18]

Chen 2001: The study of branching random walks as a probability problem was initiated by Kolmogorov (1941). [] A central limit theorem conjectured by Harris [(1963), page 75] states that [...]. See, for example, Stam (1966), Asmussen and Kaplan (1976a, b), Athreya and Kaplan (1978), Klebaner (1982), Joffe (1987), Biggins (1990), Bramson, Ney and Tao (1992) and Revesz (1994) for the developments on this subject.

What happens with the HD of the limit measure on free groups, in particular above the critical value?

Can also be done in the continuous setup

Unlike in the situation with the limits sets, the very existence of a limit measure is already a non-trivial problem

Support of the limit measure can be smaller than the limit set

Measure valued processes

Can one consider a process on probability measures? - if there is one particle at a point or 10 - the probability measures are the same, but the transitions are different

We remind that the Hausdorff dimension of a probability measure $\la$ on a metric space $(X,d)$ is defined as
$$
\HD \la = \inf \{ \HD A: \la(A)=1 \} \;,
$$
so that
$$
\HD \la \le \HD\supp\la \;.
$$
We emphasize that the Hausdorff dimension of a measure need \emph{not} coincide with the Hausdorff dimension of its support, so that the above inequality is, in general, strict.

Mention Dynkin 2006 harmonic functions

Evans 1993 - tail triviality because all measures are probability ones

The measure boundary correspondence can be defined for any stationary space - make it a comment

If the branching ratio $\ol\pi_x$ is constant, then the image $\wt\Fun(\Ms)\subset\Fun(\Ms)$ of the space $\Fun(\Xs)$ under the lifting operator $L$ is $\Pc$-invariant.

The lifted functions $\wt{f^{(t)}}$ from the definition of harmonic martingales (\dfnref{dfn:hm}) are additive in the sense that
$$
\wt{f^{(t)}}(m_1+m_2) = \wt{f^{(t)}}(m_1) + \wt{f^{(t)}}(m_2) \qquad\forall\,m_1,m_2\in\Ms \;,
$$

For the purposes of ... it would be sufficient to consider the limit measures on the Poisson boundary only

Although for ``usual'' underlying Markov chains the tail and the Poisson boundaries coincide, we still prefer to talk about the coniditional measures on the tail boundary